\newtheorem{prop}{Proposition}
\newtheorem{thm}[prop]{Theorem}
\newtheorem{cor}[prop]{Corollary}
\newtheorem{lem}[prop]{Lemma}
\theoremstyle{definition}
\newtheorem{defn}[prop]{Definition}
\newtheorem{rem}[prop]{\it Remark}
\newtheorem*{ps}{Postscript Remark}
\numberwithin{equation}{section}
\newcommand{\bP}{\mathbb{P}}
\newcommand{\bC}{\mathbb{C}}
\newcommand{\bR}{\mathbb{R}}
\newcommand{\bA}{\mathbb{A}}
\newcommand{\bQ}{\mathbb{Q}}
\newcommand{\bZ}{\mathbb{Z}}
\newcommand{\bD}{\mathbb{D}}
\newcommand{\bE}{\mathbb{E}}
\newcommand{\bG}{\mathbb{G}}
\newcommand{\cX}{\mathcal{X}}
\newcommand{\cY}{\mathcal{Y}}
\newcommand{\cO}{\mathcal{O}}
\newcommand{\cL}{\mathcal{L}}
\newcommand{\cI}{\mathcal{I}}
\newcommand{\cM}{\mathcal{M}}
\newcommand{\cF}{\mathcal{F}}
\newcommand{\cJ}{\mathcal{J}}
\newcommand{\fa}{\mathfrak{a}}
\newcommand{\fb}{\mathfrak{b}}
\newcommand{\fc}{\mathfrak{c}}
\newcommand{\fm}{\mathfrak{m}}
\newcommand{\fp}{\mathfrak{p}}
\newcommand{\fq}{\mathfrak{q}}
\newcommand{\hX}{\hat{X}}
\newcommand{\hY}{\hat{Y}}
\newcommand{\Proj}{\mathbf{Proj}}
\newcommand{\Spec}{\mathbf{Spec}~}
\newcommand{\Supp}{\mathrm{Supp}}
\newcommand{\lct}{\mathrm{lct}}
\newcommand{\vol}{\mathrm{vol}}
\newcommand{\hvol}{\widehat{\vol}}
\newcommand{\mult}{\mathrm{mult}}
\newcommand{\const}{\mathrm{const}}
\newcommand{\rom}[1]{\lowercase\expandafter{\romannumeral #1\relax}}
\newcommand{\Val}{\mathrm{Val}}
\newcommand{\ord}{\mathrm{ord}}
\newcommand{\CM}{\mathrm{CM}}
\newcommand{\Ding}{\mathrm{Ding}}
\newcommand{\Sym}{\mathrm{Sym}}
\newcommand{\tr}{\mathrm{tr}}
\newcommand{\Ric}{\mathrm{Ric}}
\newcommand{\val}{\mathrm{val}}
\newcommand{\QM}{\mathrm{QM}}
\newcommand\numberthis{\addtocounter{equation}{1}\tag{\theequation}}
\begin{document}

\title[Volume of singular KE Fano varieties]{The volume of singular K\"ahler-Einstein
Fano varieties}
\author{Yuchen Liu}
\email{yuchenl@math.princeton.edu}
\address{Department of Mathematics, Princeton University,
Princeton, NJ, 08544-1000, USA.}
\date{\today}
\begin{abstract}
 We show that the anti-canonical volume of an $n$-dimensional K\"ahler-Einstein 
 $\bQ$-Fano variety is bounded from above by certain invariants of the local singularities, namely $\lct^n\cdot\mult$ for ideals
 and the normalized volume function for real valuations. This refines a recent result by Fujita.
 As an application, we get sharp volume upper
 bounds for K\"ahler-Einstein Fano varieties with quotient singularities.
 Based on very recent results by Li and the author, we show that a Fano manifold is K-semistable
 if and only if a de Fernex-Ein-Musta\c t\u a type inequality
 holds on its affine cone. 
\end{abstract}

\maketitle

\section{Introduction}

An $n$-dimensional complex projective variety $X$ is said to be a {\it $\bQ$-Fano variety}
if $X$ has klt singularities and $-K_X$ is an 
ample $\bQ$-Cartier divisor. When $n\geq 2$, the (anti-canonical) volume 
$((-K_X)^n)$ of a $\bQ$-Fano variety $X$ can be arbitrarily large, such as
volumes of weighted projective spaces (see \cite{dol82}).
In the smooth case, Koll\'ar, Miyaoka, Mori \cite{kmm92} and
Campana \cite{cam92} showed that there exists a uniform volume upper bound $C_n$
for $n$-dimensional Fano manifolds. If $n\leq 3$, then $\bP^n$ has the 
largest volume among all $n$-dimensional Fano manifolds. This fails immediately
when $n\geq 4$ by examples of Batyrev \cite{bat81}.
However, if a $\bQ$-Fano variety $X$ admits
a K\"ahler-Einstein metric (in the sense of \cite{bbegz}),
then it is expected that $\bP^n$ has the largest volume
(see \cite{tia90,oss16} for $n=2$, and \cite{bb11,bb12} for smooth cases).
Recently, Fujita \cite{fuj15} showed that
if $X$ is a $n$-dimensional $\bQ$-Fano variety admitting a 
K\"ahler-Einstein metric, then the anti-canonical
volume $((-K_X)^n)$ is less than or equal to $(n+1)^n$ ($=$ the volume
of $\bP^n$).

In this paper, we refine the result \cite{fuj15} by looking at invariants
of the local singularities. Our first 
main result is the following:

\begin{thm}\label{mainthm}
Let $X$ be a K\"ahler-Einstein $\bQ$-Fano variety. 
Let $p\in X$ be a 
closed point. Let $Z$ be a closed subscheme of $X$ with $\Supp~Z=\{p\}$.
Then we have 
\begin{equation}\label{ineq1}
 ((-K_X)^n)\leq \left(1+\frac{1}{n}\right)^n\lct(X;I_Z)^n
 \mult_Z X,
\end{equation}
where $\lct(X;I_Z)$ is the log canonical threshold of the ideal
sheaf $I_Z$, and
$\mult_Z X$ is the Hilbert-Samuel multiplicity of $X$ along $Z$.
\end{thm}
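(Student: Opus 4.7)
The plan is to deduce the inequality from the valuative criterion for K-semistability applied to a prime divisor $E$ that computes $c := \lct(X; I_Z)$. First I would invoke two standard facts: a K\"ahler-Einstein $\bQ$-Fano variety is K-semistable (Berman), and K-semistability of $X$ is equivalent to the Fujita--Li condition that, for every prime divisor $E$ on a projective birational model $\pi\colon Y\to X$,
\begin{equation*}
A_X(E) \cdot V \;\geq\; \int_0^\infty \vol\bigl(\pi^*(-K_X)- tE\bigr)\,dt, \qquad V := ((-K_X)^n).
\end{equation*}
Taking $E$ to compute the lct, so that $A_X(E) = c\cdot d$ with $d := \ord_E(I_Z)$, produces a divisor with center $\{p\}$ (since $\Supp Z=\{p\}$) and reduces the theorem to a lower bound on this integral.

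The crux of the argument is the volume estimate
\begin{equation*}
\vol\bigl(\pi^*(-K_X) - tE\bigr) \;\geq\; V - (t/d)^n\cdot\mult_Z X, \qquad t\geq 0.
\end{equation*}
I would prove this by exploiting the inclusion $I_Z^k\cdot\cO_Y \subseteq \cO_Y(-kdE)$. Twisting the sequence $0\to I_Z^k \to \cO_X \to \cO_X/I_Z^k \to 0$ by $\cO_X(-mK_X)$ and taking global sections gives
\begin{equation*}
h^0\bigl(Y,\pi^*(-mK_X)- kdE\bigr) \;\geq\; h^0\bigl(X,-mK_X\bigr) - \dim_{\bC} \cO_X/I_Z^k,
\end{equation*}
and setting $k=\lceil tm/d\rceil$, dividing by $m^n/n!$ and letting $m\to\infty$ converts this into the stated volume bound via the Hilbert--Samuel asymptotic $\dim_{\bC} \cO_X/I_Z^k = (\mult_Z X)\,k^n/n! + O(k^{n-1})$.

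Substituting into the valuative criterion and integrating $(V-(t/d)^n\mult_Z X)_+$ over $t\in[0,\infty)$ produces, after an elementary calculus computation, $A_X(E)\cdot V \geq d\cdot\tfrac{n}{n+1}V\cdot(V/\mult_Z X)^{1/n}$. Dividing by $dV$ and using $A_X(E)/d=c$ yields $c\geq \tfrac{n}{n+1}(V/\mult_Z X)^{1/n}$, which rearranges to the desired inequality. I expect the main obstacle to be the verification that the two tools used at the start---the Fujita--Li valuative criterion and the existence of a divisor computing $\lct(X;I_Z)$---are available for the (possibly singular) $\bQ$-Fano variety $X$, rather than only for smooth Fano manifolds; once these are granted, the volume estimate is driven by the ideal inclusion together with Hilbert--Samuel asymptotics and is essentially routine.
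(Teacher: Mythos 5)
Your proposal is correct, but it takes a different route from the paper's proof of this particular theorem. The paper applies Fujita's ideal-theoretic result $\beta(Z)\geq 0$ (\cite[Theorem 1.2]{fuj15}) directly to $Z$ itself: there the log canonical threshold already appears as the coefficient in $\beta(Z)$, the relevant divisor is the Cartier divisor $F$ on the blow-up of $I_Z$, and the volume estimate $\vol_{\hX}(\sigma^*(-K_X)-xF)\geq V-\mult_Z X\cdot x^n$ is proved in Lemma \ref{volume}(1) via the same Hilbert--Samuel asymptotics you use (plus Ramanujam's identity $\mult_Z X=(-1)^{n-1}(F^n)$). You instead pass to a prime divisor $E$ computing $\lct(X;I_Z)$ and invoke the divisorial valuative criterion $A_X(E)\cdot V\geq\int_0^\infty\vol(\pi^*(-K_X)-tE)\,dt$; your volume bound via $I_Z^k\cdot\cO_Y\subseteq\cO_Y(-kdE)$ is exactly the mechanism of the paper's Lemma \ref{compare1} ($\lct(\fa)^n\mult(\fa)\geq\hvol(\ord_E)$ for an lct-computing $E$), so your argument is in effect ``Lemma \ref{compare1} plus the divisorial case of Theorem \ref{mainthm2}'' --- a composite the paper itself identifies in Section \ref{seccompare} when showing Theorems \ref{mainthm} and \ref{mainthm2} are equivalent. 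Your two worries are both resolvable: an lct-computing divisor always exists on a log resolution of $(X,I_Z)$ for klt $X$, and the needed direction of the valuative criterion (K-semistable, equivalently Ding-semistable, implies $\beta(E)\geq 0$ for divisors over a singular $\bQ$-Fano variety) is available either from \cite{fuj16}/Li or by applying \cite[Theorem 4.9]{fuj15} to the filtration induced by $\ord_E$, which is precisely how the paper proves Theorem \ref{mainthm2}. The trade-off: the paper's route needs only the single ideal-based input from \cite{fuj15} and avoids both auxiliary facts, while yours is conceptually cleaner and generalizes verbatim to arbitrary valuations. One small point of hygiene: take $m$ sufficiently divisible so that $-mK_X$ is Cartier before twisting $0\to I_Z^k\to\cO_X\to\cO_X/I_Z^k\to 0$, so that the quotient term really has length $\ell(\cO_X/I_Z^k)$.
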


Note that the invariant $\lct^n\cdot\mult$ has been studied by de Fernex, Ein and 
Musta\c t\u a in \cite{mus02,dfem03,dfem04,dfm15}. 

There are three major aspects of Theorem \ref{mainthm}:

\begin{itemize}
\item If we take $Z=p$ to be a smooth point on $X$, then $\lct(X;\fm_p)=n$
and $\mult_p X=1$. In particular, we recover Fujita's result \cite[Corollary 1.3]{fuj15}.

\item By definition, $\lct(X;I_Z)$ is the infimum of $\frac
{1+\ord_E(K_{Y/X})}{\ord_E(I_Z)}$ among all prime divisors $E$ on a resolution
$Y$ of $X$. Although $\lct(X;I_Z)$ is not easy to compute in general, we can always get a volume upper bound
by considering one divisor $E$, e.g. Theorem \ref{quotsing} and \ref{nonterm}.

\item On the other hand, if we fix a singularity $p$ on a K\"ahler-Einstein $\bQ$-Fano
variety $X$, we get a lower bound of $\lct^n\cdot\mult$ among all thickenings
of $p$. For example, if $X=\bP^n$ then the inequality \eqref{ineq1} recovers
the main result in \cite{dfem04} (see Theorem \ref{dfem}). More generally,
we get de Fernex-Ein-Musta\c{t}\u{a} type inequalities for all cone singularities
with K-semistable base using a logarithmic version of Theorem \ref{mainthm}
(see Theorem \ref{k-ss-dfem}).
\end{itemize}
\medskip

For a real valuation $v$ on $\bC(X)$ centered at a closed point $p$, the 
normalized volume function $\hvol$, introduced by C. Li in \cite{li15a}, is defined as
$\hvol(v):=A_X(v)^n\cdot\vol(v)$, where $A_X(v)$ is the log discrepancy of $v$ and $\vol(v)$ is the volume of $v$.
The following theorem gives an upper bound for the anti-canonical volume of $X$ in terms of the normalized volume of a real valuation. 

\begin{thm}\label{mainthm2}
Let $X$ be a K\"ahler-Einstein $\bQ$-Fano variety. 
Let $p\in X$ be a 
closed point. Let $v$ be a real valuation on $\bC(X)$ centered at $p$.
Then we have 
\begin{equation}\label{ineq2}
 ((-K_X)^n)\leq \left(1+\frac{1}{n}\right)^n\hvol(v).
\end{equation}
\end{thm}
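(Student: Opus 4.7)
The plan is to deduce Theorem \ref{mainthm2} from Theorem \ref{mainthm} by approximating the valuation $v$ with the sequence of its valuation ideals. Assume $A_X(v)<\infty$, since otherwise the right hand side is infinite. For each positive integer $k$, set
$$\fa_k(v):=\{f\in\cO_{X,p}\ :\ v(f)\geq k\}.$$
The first step is to check that each $\fa_k(v)$ is $\fm_p$-primary, so that $Z_k:=V(\fa_k(v))$ is a closed subscheme of $X$ with $\Supp\,Z_k=\{p\}$. This follows from the centering hypothesis: since $p$ is a closed point and $v$ is centered at $p$, evaluating $v$ on a finite generating set of $\fm_p$ produces a positive constant $c>0$ with $v(\fm_p)\geq c$, whence $\fm_p^{\lceil k/c\rceil}\subseteq\fa_k(v)$.

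Second, I would apply Theorem \ref{mainthm} to each $Z_k$, obtaining
\begin{equation*}
((-K_X)^n)\leq \left(1+\tfrac{1}{n}\right)^n\lct(X;\fa_k(v))^n\cdot\mult(\fa_k(v)).
\end{equation*}
Since $v$ itself is an admissible valuation computing an upper bound for the log canonical threshold, one has $\lct(X;\fa_k(v))\leq A_X(v)/v(\fa_k(v))\leq A_X(v)/k$, i.e. $k\cdot\lct(X;\fa_k(v))\leq A_X(v)$. Substituting,
\begin{equation*}
((-K_X)^n)\leq\left(1+\tfrac{1}{n}\right)^n A_X(v)^n\cdot\frac{\mult(\fa_k(v))}{k^n}.
\end{equation*}

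Third, I would let $k\to\infty$ and invoke the Ein--Lazarsfeld--Smith / Musta\c t\u a volume-of-graded-sequence theorem applied to $\fa_\bullet(v)$: under the standing $\fm_p$-primary assumption just verified, $\mult(\fa_k(v))/k^n$ converges to the volume of the graded sequence, which coincides with $\vol(v)$. The desired inequality follows after multiplying by $A_X(v)^n$, since by definition $\hvol(v)=A_X(v)^n\vol(v)$.

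The main obstacle is the final limiting step: ensuring that $\mult(\fa_k(v))/k^n\to\vol(v)$ for a possibly non-quasi-monomial, general real valuation centered at a klt (hence possibly singular) point. The primary case is handled by the graded-sequence multiplicity formula, but extending it to arbitrary real valuations $v\in\Val_{X,p}$ with $A_X(v)<\infty$ on a $\bQ$-Fano variety requires care; one either works directly with the graded sequence (using that $\fa_\bullet(v)$ automatically has the required finiteness/primary properties once $v$ is centered at a closed point) or first reduces to the case of divisorial valuations, which are dense in the relevant subspace of $\Val_{X,p}$ and for which the multiplicity and log discrepancy behave well. All other ingredients — Theorem \ref{mainthm}, the inequality $k\cdot\lct(X;\fa_k(v))\leq A_X(v)$, and the definition $\hvol(v)=A_X(v)^n\vol(v)$ — are already in place.
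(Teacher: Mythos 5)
Your proposal is correct, but it takes a genuinely different route from the paper's own proof of Theorem \ref{mainthm2}. The paper proves the theorem directly at the level of valuations: it attaches to $v$ the filtration $\cF_v^x S_m = H^0(X, L^m\cdot\fa_x(v))$ of the section ring, applies Fujita's filtration result (Theorem \ref{fuj4.9}) to obtain the key inequality $A_X(v)\geq \frac{1}{r^{n+1}((-K_X)^n)}\int_0^{+\infty}\vol(\cF_v S^{(t)})\,dt$ (Proposition \ref{val1}), and then bounds $\vol(\cF_v S^{(x)})\geq (L^n)-\vol(v)x^n$ and integrates. You instead deduce Theorem \ref{mainthm2} from Theorem \ref{mainthm} by applying it to the valuation ideals $\fa_k(v)$, using $\lct(X;\fa_k(v))\leq A_X(v)/v(\fa_k(v))\leq A_X(v)/k$ and the limit $\mult(\fa_k(v))/k^n\to\vol(v)$. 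This works: the $\fm_p$-primariness of $\fa_k(v)$ and the multiplicity limit are exactly what the paper cites \cite{els03,mus02,cut13} for in Section \ref{secval} (Cutkosky's result covers graded families of $\fm_p$-primary ideals on a normal, hence analytically unramified, local ring, so your worry about the limiting step for general real valuations is already handled by those references), and the inequality $\lct(\fa)\leq A_X(v)/v(\fa)$ for arbitrary $v$ with $A_X(v)<+\infty$ on a singular variety is \cite[Theorem 1.2]{bdffu}, which the paper itself invokes in the proof of Theorem \ref{compare2}. Indeed, your argument is precisely the chain of inequalities \eqref{lct1} and \eqref{lct2.5} in Theorem \ref{compare2}, by which the paper observes a posteriori that Theorems \ref{mainthm} and \ref{mainthm2} are equivalent; so your route is legitimate and, granted Theorem \ref{mainthm}, more elementary. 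What the paper's direct filtration proof buys in exchange is the intermediate inequality \eqref{logFujval}, which is sharper than the final volume bound, generalizes to the logarithmic setting of \cite{ll16}, and feeds into the analysis of the equality case in Section \ref{maxvol}.
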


Minimizing the normalized volume function $\hvol$ has been studied by Li, Xu, Blum 
and the author in \cite{li15a, li15b, ll16, lx16, blu16}. In \cite{ll16}, a logarithmic version of 
Theorem \ref{mainthm2} was developed to obtain the sharp lower bound
of the normalized volume function for cone singularities over K\"ahler-Einstein
$\bQ$-Fano varieties.

To compare Theorem \ref{mainthm} and \ref{mainthm2}, we know from \cite{mus02,li15a}
that the infimum of $\lct(X;I_Z)^n\cdot \mult_Z X$ among all thickenings $Z$ of $p$
is no bigger than the infimum of $\hvol(v)$ among all real valuations centered
at $p$. In fact, we show that these two infimums are the same
(see Theorem \ref{compare2}). 
In particular, this means that Theorem \ref{mainthm} and \ref{mainthm2} are
actually equivalent. 
\medskip

As an application of Theorem \ref{mainthm} and \ref{mainthm2},
we get sharp volume upper bounds for K\"ahler-Einstein $\bQ$-Fano varieties with quotient singularities.

\begin{thm}\label{quotsing}
 Let $X$ be a K\"ahler-Einstein $\bQ$-Fano variety of dimension $n$.
 Let $p\in X$ be a closed point.
 Suppose $(X,p)$ is a quotient singularity with local 
 analytic model $\bC^n/G$ where $G\subset GL(n,\bC)$ acts freely in
 codimension $1$. Then
 \[
  ((-K_X)^n)\leq \frac{(n+1)^n}{|G|},
 \]
with equality if and only if $|G\cap \mathbb{G}_m|=1$ and $X\cong\bP^n/G$,
where $\mathbb{G}_m\subset GL(n,\bC)$ is the subgroup consisting 
of non-zero scalar matrices.
\end{thm}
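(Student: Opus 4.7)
The strategy is to feed Theorem \ref{mainthm2} a well-chosen valuation at $p$ extracted from the local analytic cover. Specifically, let $\pi\colon (\bC^n,0)\to (X,p)^{\mathrm{an}}$ be the quasi-\'etale quotient cover given by the hypothesis, and set $v_0=\ord_0$ on $\bC^n$. Because $v_0$ is $G$-invariant it descends to a real valuation $v$ on $\bC(X)$ centered at $p$ (concretely, its restriction to the invariant subring $\cO_{\bC^n,0}^G\simeq \cO_{X,p}^{\mathrm{an}}$). The hope is that the normalized volume of this $v$ is exactly $n^n/|G|$, which would turn Theorem \ref{mainthm2} into the desired bound.

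\textbf{Computing $\hvol(v)$.} The small-subgroup assumption makes $\pi$ \'etale in codimension $1$, hence $K_{\bC^n}=\pi^{*}K_X$ near $0$, so $A_X(v)=A_{\bC^n}(v_0)=n$. For the volume, Molien's averaging formula gives
\[
 \dim_{\bC}\cO_{X,p}/\fa_m(v)=\dim_{\bC}\bigl(\cO_{\bC^n,0}/\fm_0^{m}\bigr)^{G}=\frac{1}{|G|}\sum_{g\in G}\sum_{d=0}^{m-1}\tr\!\bigl(g\,\big|\,\Sym^{d}(\bC^n)^{*}\bigr).
\]
Every non-trivial $g\in G$ fixes a subspace of codimension $\geq 2$, hence has at most $n-2$ unit eigenvalues; a standard expansion of the generating series $\det(1-gt)^{-1}$ then shows the corresponding partial trace sum is $O(m^{n-2})$. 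Only the identity contributes in top order, yielding $\vol(v)=1/|G|$ and consequently $\hvol(v)=n^n/|G|$. Plugging into \eqref{ineq2},
\[
 ((-K_X)^n)\leq \left(1+\frac{1}{n}\right)^n\cdot\frac{n^n}{|G|}=\frac{(n+1)^n}{|G|}.
\]

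\textbf{The equality case.} Direction $(\Leftarrow)$ is a direct computation: if $X\cong\bP^n/G$ with $|G\cap\bG_m|=1$, then $G$ (embedded in $GL(n+1,\bC)$ by adjoining a trivial coordinate) acts faithfully on $\bP^n$ and freely in codimension $1$, so $((-K_X)^n)=((-K_{\bP^n})^n)/|G|=(n+1)^n/|G|$. Direction $(\Rightarrow)$ is the main obstacle, and it is where the real work lies. Assuming equality, the valuation $v$ must saturate \eqref{ineq2}; revisiting the proof of Theorem \ref{mainthm2}, this forces the filtration $\fa_\bullet(v)$ on $R(X,-K_X)=\bigoplus_m H^{0}(X,-mK_X)$ to induce a product test configuration, so that $R(X,-K_X)\cong\mathrm{gr}_{v}R(X,-K_X)$ as graded rings. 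The associated graded algebra is identified with the coordinate ring of the projective cone over $(\bP^{n-1}/G,\cO(1))$ for the natural polarization; this cone is the section ring of $\bP^n/G$ precisely when $|G\cap\bG_m|=1$ (otherwise the scalar part of $G$ destroys the faithful cone structure and one would arrive at a strictly smaller degree quotient, contradicting equality). Together these give $X\cong\bP^n/G$ with $|G\cap\bG_m|=1$, as required. The crucial technical input is the rigidity of equality in Theorem \ref{mainthm2}: turning saturation into a genuine isomorphism between $X$ and its $v$-degeneration is where the proof spends its effort.
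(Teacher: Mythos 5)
Your proof of the inequality is essentially the paper's: the same valuation (the descent of $\ord_0$ along the local quotient cover), the same computation $A_X(v)=n$ via \'etale-in-codimension-one, and the same Molien-series argument for $\vol(v)=1/|G|$ (the paper's Lemma \ref{quotmult} only needs the weaker estimate $o(m^n)$ for $g\neq\mathrm{id}$, coming from faithfulness of the representation). The one point you gloss over is that your $v$ is a priori defined on the analytic/completed local ring, and one must check that the induced valuation on $\cO_{X,p}$ (hence on $\bC(X)$) has the same log discrepancy and volume; the paper handles this via singular versions of results of Jonsson--Musta\c{t}\u{a}. The ``if'' direction of the equality statement is fine.

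The ``only if'' direction contains a genuine gap, precisely at the step you yourself flag as ``where the real work lies.'' You assert that saturation of \eqref{ineq2} ``forces the filtration $\fa_\bullet(v)$ to induce a product test configuration, so that $R(X,-K_X)\cong\mathrm{gr}_v R(X,-K_X)$.'' This does not follow from revisiting the proof of Theorem \ref{mainthm2}: that proof uses only Ding-semistability, and equality there only yields that certain volume functions are extremal and that an associated degeneration has vanishing CM weight. Vanishing CM weight does not force a product configuration under semistability alone --- the paper explicitly remarks that K-polystability cannot be dropped here (cubic surfaces with one or two $\bA_2$ singularities are K-semistable but not K\"ahler-Einstein, while all global quotients of $\bP^2$ are). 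The missing ingredients, supplied by the paper's Lemma \ref{kpoly}, are: (i) equality makes $v$ a minimizer of $\hvol$, hence a Koll\'ar component (Blum, Li--Xu), which gives finite generation of $\mathrm{gr}_v\cO_{X,p}$ and allows one to actually construct a normal semi test configuration $\cX=\Proj_{X\times\bA^1}\bigoplus_{m\geq 0}\fa_m(\bar{v})$; (ii) a concrete verification that the polarization $\cL$ is semiample with $\cL^\perp=\hX$ and $\CM(\cX,\cL)=0$; (iii) K-polystability of $X$ (Berman's theorem, from the KE hypothesis) to identify the central fiber of the ample model with $X$, whence $X\cong\Proj(\mathrm{gr}_v\cO_{X,p})[x]\cong\bP^n/G$ as an orbifold cone. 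Finally, eliminating $d=|G\cap\bG_m|>1$ is a degree computation: $\pi^*(-K_X)=\cO_{\bP^n}(n+d)$ gives $((-K_X)^n)=(n+d)^n/|G|$, which for $d>1$ would be strictly \emph{larger} than $(n+1)^n/|G|$ (not smaller, as your heuristic suggests), contradicting the upper bound.
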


It is well-known that for algebraic surfaces, klt singularities are the
same as quotient singularities. In \cite{tia90}, Tian showed that
the anti-canonical volume of a K\"ahler-Einstein log Del Pezzo surface
is bounded from above by $48/|G|$ where $G\subset GL(2,\bC)$ is the 
orbifold group at a closed point. In \cite{oss16}, Odaka, Spotti and Sun
improved the volume upper bounds to $12/|G|$.

As a direct consequence of Theorem \ref{quotsing}, we get the sharp volume
upper bounds $9/|G|$ for K\"ahler-Einstein log Del Pezzo surfaces.

\begin{cor}\label{logdp}
Let $X$ be a K\"ahler-Einstein log Del Pezzo surface. 
Let $p\in X$ be a closed point with local analytic model $\bC^2/G$,
where $G\subset GL(2,\bC)$ acts freely in codimension $1$.
Then
\[
 ((-K_X)^2)\leq \frac{9}{|G|},
\]
with equality if and only if $|G\cap \mathbb{G}_m|=1$ and $X\cong\bP^2/G$.
\end{cor}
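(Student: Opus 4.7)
The plan is to deduce this corollary directly from Theorem \ref{quotsing} by specializing to dimension $n=2$. First I would observe that by definition, a log Del Pezzo surface is a $2$-dimensional $\bQ$-Fano variety: a projective normal surface with klt singularities whose anti-canonical divisor $-K_X$ is ample and $\bQ$-Cartier. Since we are told that the local analytic model at $p$ is $\bC^2/G$ with $G\subset GL(2,\bC)$ acting freely in codimension $1$, the hypotheses of Theorem \ref{quotsing} are satisfied at the point $p$ in dimension $n=2$.

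Given this, the inequality is an immediate specialization: Theorem \ref{quotsing} yields
\[
 ((-K_X)^2)\leq \frac{(2+1)^2}{|G|}=\frac{9}{|G|},
\]
and the equality characterization $|G\cap\mathbb{G}_m|=1$ and $X\cong\bP^2/G$ transfers verbatim from the equality statement of Theorem \ref{quotsing}.

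There is essentially no obstacle in the proof, since the bulk of the work is already contained in Theorem \ref{quotsing} (and thence in Theorems \ref{mainthm} and \ref{mainthm2}). The only additional remark worth making is the classical fact that in dimension two, klt singularities coincide with quotient singularities, so the hypothesis that $p$ admits a quotient analytic model $\bC^2/G$ is automatically fulfilled at every closed point of a log Del Pezzo surface. This shows that the bound $9/|G_p|$ applies at every singular point and in particular justifies calling Corollary \ref{logdp} a clean surface-theoretic statement rather than a conditional one.
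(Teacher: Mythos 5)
Your proposal is correct and matches the paper exactly: the corollary is stated there as an immediate specialization of Theorem \ref{quotsing} to $n=2$, with no further argument needed, and the paper likewise records the classical fact that klt surface singularities are quotient singularities so that the bound applies at every closed point.
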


\begin{rem}
  The author was informed by Kento Fujita that he independently obtained cases of Corollary 
 \ref{logdp} when $(X,p)$ is Du Val.
\end{rem}

It was conjectured by Cheltsov and Kosta in \cite[Conjecture 1.18]{ck14}
that a Gorenstein Del Pezzo surface admits K\"ahler-Einstein metrics
if and only if the singularities are of certain types depending on
the anti-canonical volume. This conjecture was proved by Odaka, Spotti and Sun
in \cite[page 165]{oss16} where $\bQ$-Gorenstein smoothable
K\"ahler-Einstein log Del Pezzo surfaces are classified.
As a quick application of Corollary \ref{logdp}, we give an alternative
proof of the ``only if'' part of this conjecture.
We remark that partial results are known before \cite{oss16}, e.g.
\cite{dt92, jef97, won13} on the ``only if'' part and \cite{mm93, 
gk07, che08, shi10, ck14} on the ``if'' part.

\begin{cor}\label{ck}
Let $X$ be a K\"ahler-Einstein log Del Pezzo surface with at most Du Val
singularities. 
\begin{enumerate}
 \item If $((-K_X)^2)=1$, then $X$ has at most singularities of
 type $\bA_1$, $\bA_2$, $\bA_3$, $\bA_4$, $\bA_5$, $\bA_6$,
 $\bA_7$ or $\bD_4$.
 \item If $((-K_X)^2)=2$, then $X$ has at most singularities
of type $\bA_1$, $\bA_2$ or $\bA_3$.
 \item If $((-K_X)^2)=3$, then $X$ has at most singularities
of type $\bA_1$ or $\bA_2$.
 \item If $((-K_X)^2)=4$, then $X$ has at most singularities
of type $\bA_1$.
 \item If $((-K_X)^2)\geq 5$, then $X$ is smooth.
\end{enumerate}
\end{cor}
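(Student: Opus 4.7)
The plan is to apply Corollary \ref{logdp} at each singular point $p \in X$. A Du Val singularity $(X,p)$ has local analytic model $\bC^2/G_p$ with $G_p \subset SU(2) \subset GL(2,\bC)$ a finite subgroup acting freely off the origin, so Corollary \ref{logdp} applies and yields
\[
d \cdot |G_p| \;\leq\; 9, \qquad d := ((-K_X)^2).
\]
The orders $|G_p|$ are classical: $n+1$ for $\bA_n$, $4(n-2)$ for $\bD_n$, and $24, 48, 120$ for $\bE_6, \bE_7, \bE_8$ respectively. Since Du Val singularities are Gorenstein canonical, $d$ is a positive integer in $\{1,\dots,9\}$.

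The proof then reduces to arithmetic. For $d \geq 5$ we get $|G_p| < 2$, so every closed point is smooth. For $d = 4, 3, 2$ the bounds $|G_p| \leq 2, 3, 4$ leave only $\bA_1$; $\bA_1,\bA_2$; and $\bA_1,\bA_2,\bA_3$, respectively, since every $\bD_n$ or $\bE_k$ singularity has group order at least $8$. For $d = 1$ the bound $|G_p| \leq 9$ leaves the candidates $\bA_1,\dots,\bA_8$ together with $\bD_4$ (which has $|G_p|=8$).

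The one genuine obstacle is excluding $\bA_8$ when $d=1$, since there $|G_p| = 9$ saturates the inequality. The equality clause of Corollary \ref{logdp} then forces $X \cong \bP^2/\mu_9$, with $\mu_9$ acting on $\bP^2$ via the projective matrix $\mathrm{diag}(1,\zeta_9,\zeta_9^{-1})$ extending the local generator $\mathrm{diag}(\zeta_9,\zeta_9^{-1}) \in SU(2)$ trivially in the extra coordinate. I would then inspect the other two fixed points $[0:1:0]$ and $[0:0:1]$: on the affine chart at $[0:1:0]$ the induced generator acts as $\mathrm{diag}(\zeta_9^{-1},\zeta_9^{-2})$, whose determinant $\zeta_9^{-3}$ is a primitive third root of unity, not $1$. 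Hence the stabilizer does not lie in $SU(2)$ and the resulting cyclic quotient singularity is not Du Val, contradicting the hypothesis. This rules out $\bA_8$ and completes the classification; apart from this boundary-equality check, the argument is pure bookkeeping from the group orders.
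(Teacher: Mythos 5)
Your proof is correct and follows essentially the same route as the paper: apply Corollary \ref{logdp} with the classical orders of the Du Val orbifold groups, and in the boundary case $\bA_8$ use the equality statement to identify $X\cong\bP^2/(\bZ/9\bZ)$ and derive a contradiction from the non--Du Val singularities at the other two fixed points (your determinant-$\neq 1$ criterion is equivalent to the paper's observation that these points are of type $\frac{1}{9}(1,2)$).
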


The comparison between Theorem \ref{mainthm} and \ref{mainthm2}
also allows us to rephrase the results from \cite{li15b,ll16} in 
terms of $\lct^n\cdot\mult$. The following theorem gives a necessary
and sufficient condition for a Fano manifold to be K-semistable.

\begin{thm}\label{k-ss-dfem}
 Let $V$ be a Fano manifold of dimension $n-1$.
 Assume $H=-rK_V$ is an ample Cartier divisor for 
 some $r\in\bQ_{>0}$. Let $X:=C(V,H)=\Spec\bigoplus_{k=0}^\infty
 H^0(V,mH)$ be the affine cone with cone vertex $o$.
 Then $V$ is K-semistable if and only if for any closed subscheme
 $Z$ of $X$ supported at $o$, the following inequality holds:
 \begin{equation}\label{eqdfemtype}
  \lct(X;I_Z)^n\cdot\mult_Z X\geq 
  \frac{1}{r}((-K_V)^{n-1}).
 \end{equation}
\end{thm}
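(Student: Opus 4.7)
The plan is to derive Theorem \ref{k-ss-dfem} by assembling the normalized-volume minimization characterization of K-semistability from \cite{li15b,ll16} with the valuation-vs.-ideal comparison provided by Theorem \ref{compare2}. The first preparatory step is to identify the right-hand side of \eqref{eqdfemtype} as the normalized volume of the canonical divisorial valuation at the cone vertex. Let $\pi\colon Y\to X$ denote the blowup of $o$, so that $Y$ is the total space of $\cO_V(-H)$ and the exceptional divisor $E$ is a copy of $V$ with $E|_E=-H$. Writing $K_Y=\pi^*K_X+aE$ and applying adjunction (using that $\pi|_E$ is constant, so $(\pi^*K_X)|_E=0$) gives $K_V=-(a+1)H=-\tfrac{1}{r}H$, hence $A_X(\ord_E)=a+1=1/r$. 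Since $\vol(\ord_E)=(H^{n-1})=r^{n-1}((-K_V)^{n-1})$, I obtain
\[
 \hvol(\ord_E)=A_X(\ord_E)^n\cdot\vol(\ord_E)=\frac{1}{r}((-K_V)^{n-1}),
\]
which is exactly the right-hand side of \eqref{eqdfemtype}.

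With this computation in hand, I would invoke the main result of \cite{li15b,ll16}: for a smooth Fano manifold $V$, K-semistability is equivalent to $\ord_E$ being a minimizer of $\hvol$ among all real valuations of $\bC(X)$ centered at $o$. In view of the computation above, this reads
\[
 V \text{ is K-semistable}\iff \hvol(v)\geq\tfrac{1}{r}((-K_V)^{n-1})\ \text{for every real valuation}\ v\ \text{centered at}\ o.
\]

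The final step is to convert this valuation-theoretic lower bound into an ideal-theoretic one using Theorem \ref{compare2}, which states
\[
 \inf_{v}\hvol(v)=\inf_{Z}\lct(X;I_Z)^n\cdot\mult_Z X,
\]
with $v$ running over real valuations centered at $o$ and $Z$ over closed subschemes of $X$ with $\Supp Z=\{o\}$. Because $\ord_E$ attains the value $\tfrac{1}{r}((-K_V)^{n-1})$ on the valuation side, the universal lower bound $\hvol(v)\geq\tfrac{1}{r}((-K_V)^{n-1})$ is equivalent to the corresponding universal lower bound $\lct(X;I_Z)^n\cdot\mult_Z X\geq\tfrac{1}{r}((-K_V)^{n-1})$ for every thickening $Z$ of $o$; chaining this with the equivalence above yields the theorem. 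The deepest input is the K-semistability/minimization equivalence from \cite{li15b,ll16}; the genuinely new ingredient here is Theorem \ref{compare2}, and the main technical point to verify is that equality (and not merely inequality) of the two infima is what allows both directions of the ``if and only if'' to be transported between sides.
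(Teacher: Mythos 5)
Your proposal is correct and follows essentially the same route as the paper: compute $\hvol(\ord_E)=\tfrac{1}{r}((-K_V)^{n-1})$ for the canonical divisorial valuation at the vertex, invoke \cite[Corollary 1.5]{ll16} to identify K-semistability with $\ord_E$ minimizing $\hvol$, and transfer the resulting universal lower bound to ideals via the equality of infima in Theorem \ref{compare2}. The only difference is that you spell out the adjunction computation of $A_X(\ord_E)$ and $\vol(\ord_E)$, which the paper treats as clear.
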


In particular, from the ``only if'' part of Theorem \ref{k-ss-dfem} 
we get de Fernex-Ein-Musta\c t\u a type inequalities
\eqref{eqdfemtype} on cone singularities with K-semistable bases
(see \cite{dfem04} or Theorem \ref{dfem} for the smooth case).
Note that the ``if'' part is a straight-forward consequence
after \cite{li15b} using the relations between $\lct^n\cdot\mult$ and $\hvol$
from \cite{mus02,li15a}. 
\medskip

The proofs of Theorem \ref{mainthm} and \ref{mainthm2} rely
on the recent result \cite{fuj15} where Fujita 
settled the optimal volume upper bounds for Ding-semistable 
$\bQ$-Fano varieties. Recall that a $\bQ$-Fano variety $X$ is said to be \textit{Ding-semistable} if the Ding invariant $\Ding(\cX, \cL)$ is nonnegative for any normal test configuration $(\cX,\cL)/\bA^1$ of $(X,-rK_X)$. The results in \cite[Section 3]{ber12} show that, if a $\bQ$-Fano variety $X$ admits a K\"ahler-Einstein
metric, then $X$ is Ding-semistable (see also \cite[Theorem 3.2]{fuj15}). 
This allows us to prove Theorem \ref{mainthm} and \ref{mainthm2}
under the weaker assumption that $X$ is Ding-semistable (see Theorem \ref{main1} and \ref{main2}).
To prove the equality case of Theorem \ref{quotsing}, we blow
up a flag ideal associated to the $\hvol$-minimizing valuation to obtain 
a test configuration with zero $\CM$ weight. Then K-polystability is enough
to characterize the $\bQ$-Fano variety with maximal volume (see Lemma
\ref{kpoly} for details).
\medskip

The paper is organized as follows. In Section \ref{prelim}, we recall
the notions of Seshadri constants, Ding-semistability, filtrations 
and normalized volume of real valuations. As a generalization of 
\cite[Theorem 2.3]{fuj15}, we characterize the Seshadri constant of an arbitrary
thickening of a close point in terms of the volume function in 
Lemma \ref{volume}.
In Section \ref{pf}, we prove Theorem \ref{mainthm} and 
\ref{mainthm2}. The proof of Theorem \ref{mainthm} is a quick 
application of \cite[Theorem 1.2]{fuj15} and Lemma \ref{volume}.
To prove Theorem \ref{mainthm2}, we apply \cite[Theorem 4.9]{fuj15} to the 
filtration associated to any real valuation. In Section \ref{app}, 
we give some applications of Theorem \ref{mainthm} and \ref{mainthm2}. 
We recover the main result from \cite{dfem04}. We prove the inequality
part of Theorem \ref{quotsing}. We also get a volume upper bound for a
Ding-semistable $\bQ$-Fano variety with an isolated singularity that 
is not terminal (see Theorem \ref{nonterm}).
Section \ref{seccompare} is devoted to comparing the two invariants 
occurred in the volume upper bounds, i.e. $\lct^n\cdot\mult$ 
and $\hvol$. In Theorem \ref{compare2}, we show that these two 
invariants have the same infimums among ideals supported (resp.
valuations centered) at a klt point. This implies that Theorem 
\ref{mainthm} and \ref{mainthm2} are equivalent to each other. We prove
Theorem \ref{k-ss-dfem} using Theorem \ref{compare2} and results from 
\cite{li15b, ll16}. In Section \ref{minimizer}, we study the existence
of minimizers of $\lct^n\cdot\mult$ and $\hvol$. In Section 
\ref{maxvol}, we study the equality case of Theorem \ref{mainthm} 
and \ref{mainthm2}. Lemma \ref{kpoly} characterizes the 
equality case of Theorem \ref{mainthm2} when $X$ is 
K-polystable with extra conditions on the real valuation.
Applying this lemma, we prove the equality case of Theorem
\ref{quotsing} and Corollary \ref{ck}. We also prove that 
$\bP^n$ is the only Ding-semistable $\bQ$-Fano variety of 
volume at least $(n+1)^n$ (see Theorem \ref{Pn}). This theorem
improves the equality case of \cite[Theorem 1.1]{fuj15} where Fujita proved
for Ding-semistable Fano manifolds.

\subsection*{Notation}
Throughout this paper, we work over the complex numbers $\bC$.
Let $X$ be an $n$-dimensional normal variety. Let $\Delta$ be an effective
$\bQ$-divisor on $X$ such that $(X,\Delta)$ is a klt pair.
For a closed subscheme $Z$ of $X$, the \textit{log canonical threshold}
of its ideal sheaf $I_Z$ with respect to $(X,\Delta)$ is defined as
\[
 \lct(X,\Delta;I_Z):=\inf_E\frac{1+\ord_E(K_{Y}-f^*(K_X+\Delta))}{\ord_E(I_Z)},
\]
where the infimum is taken over all prime divisors $E$ on a log 
resolution $f:Y\to(X,\Delta)$. We also denote by $\lct(X;I_Z)=\lct(X,0;I_Z)$
when $\Delta=0$.
If $\dim Z=0$, the \textit{Hilbert-Samuel multiplicity}
of $X$ along $Z$ is defined as
\[
 \mult_Z(X):=\lim_{k\to\infty}\frac{\ell(\cO_X/I_Z^k)}{k^n/n!}.
\]
We will also use the notation $\lct(\fa)$ (and $\mult(\fa)$)
as abbreviation of $\lct(X;\fa)$ (and $\mult_{V(\fa)}X$)
for a coherent ideal sheaf $\fa$ (of dimension $0$) once the 
variety $X$ is specified.

Following \cite{bbegz, ber12},
on an $n$-dimensional $\bQ$-Fano variety $X$, a K\"ahler metric 
$\omega$ on the smooth locus $X_{reg}$ is said to be
a \textit{K\"ahler-Einstein metric on $X$} if it has constant Ricci curvature, 
i.e. $\Ric(\omega)=\omega$ on $X_{reg}$, and $\int_{X_{reg}}\omega^n=((-K_X)^n)$.
By \cite[Definition 3.5 and Proposition 3.8]{bbegz}, this is equivalent to saying that $\omega$ is
a closed positive $(1,1)$-current with full Monge-Amp\`ere mass
on $X$ satisfying $V^{-1}\omega^n=\mu_\omega$ (see \cite[Section 3]{bbegz}
for details).

Given a normal variety $X$ and a (not necessarily effective) $\bQ$-divisor $\Delta$, we say that
$(X,\Delta)$ is a \emph{sub pair} if $K_X+\Delta$ is $\bQ$-Cartier.
Let $f:Y\to (X,\Delta)$ be a log resolution, then we have the following equality
\[
 K_Y=f^*(K_X+\Delta)+\sum_{i}a_i E_i,
\]
where $E_i$ are distinct prime divisors on $Y$. We say that $(X,\Delta)$
is \emph{sub log canonical} if $a_i\geq -1$ for all $i$. If $D$ is a 
$\bQ$-Cartier $\bQ$-divisor on $X$, then the \emph{log canonical threshold}
of $D$ with respect to a sub pair $(X,\Delta)$ is defined as
\begin{equation}\label{sublct}
 \lct(X,\Delta;D):=\sup\{c\in\bR: (X,\Delta+cD)\textrm{ is sub log canonical}\}. 
\end{equation}

\subsection*{Acknowledgements}
 I would like to thank my advisor J\'anos
Koll\'ar for his constant support, encouragement
and numerous inspiring conversations. I wish to thank Chi Li and Chenyang Xu for 
fruitful discussions and encouragement. I also wish to thank
Charles Stibitz, Yury Ustinovskiy, Xiaowei Wang and
Ziquan Zhuang for many useful discussions, Lue Pan for providing
an argument for Lemma \ref{quotmult}, and Harold Blum, Kento Fujita and Song Sun
for helpful comments and suggestions through e-mails.
The author is partially supported by NSF grant DMS-0968337 and DMS-1362960.

\section{Preliminaries}\label{prelim}

\subsection{Seshadri constants}

\begin{defn}[\cite{laz04}]
 Let $X$ be a projective variety of dimension $n$.
 For a $\bQ$-Cartier $\bQ$-divisor $L$ on $X$,
 we define the \textit{volume} of $L$ on $X$ to be
 \[
  \vol_X(L):=\limsup_{k\to\infty,\, kL\textrm{ Cartier}}
  \frac{h^0(X,\cO_X(kL))}{k^n/n!}.
 \]
 We know that the limsup computing $\vol_X(L)$ is actually a limit.
 If $L\equiv L'$, then $\vol_X(L)=\vol_X(L')$. Moreover,
 we can extend it uniquely to a continuous function
 \[
  \vol_X:N^1(X)_{\bR}\to \bR_{\geq 0}.
 \]
\end{defn}

\begin{defn}
 Let $X$ be a projective variety, $L$ be an ample $\bQ$-Cartier divisor
 on $X$. Let $Z$ be a nonempty proper closed subscheme of $X$ with ideal
 sheaf $I_Z$. Denote by $\sigma:\hX\to X$ the blow up of $X$ along $Z$. 
 Let $F$ be the Cartier divisor on $\hX$ given by $\cO_{\hX}(-F)=\sigma^{-1}
 (I_Z)\cdot \cO_{\hX}$. The \textit{Seshadri constant} of $L$ along $Z$,
 denoted by $\epsilon_Z(L)$, is defined as
 \[
  \epsilon_Z(L):=\sup\{x\in\bR_{>0}\mid \sigma^*L-xF\textrm{ is ample}\}.
 \]
\end{defn}

\begin{lem}\label{volume}
Let $X$ be an $n$-dimensional normal projective variety with $n\geq 2$,
$L$ be an ample $\bQ$-divisor on $X$, $p\in X$ be a 
closed point. 
Let $Z$ be a closed subscheme of $X$ with $\Supp~Z=\{p\}$.
Let $\sigma:\hX\to X$ be the blow up along $Z$, 
and $F\subset\hX$ be the Cartier divisor defined by the 
equation $\cO_{\hX}(-F)=\sigma^{-1}I_Z\cdot\cO_{\hX}$.
\begin{enumerate}
\item For any $x\in\bR_{\geq 0}$, we have
\[
\vol_{\hX}(\sigma^*L-xF)\geq((\sigma^*L-xF)^n)=(L^n)-\mult_Z X \cdot x^n.
\]
\item Set $\Lambda_Z(L):=\{x\in\bR_{\geq 0}\mid \vol_{\hX}(\sigma^*L-xF)=((\sigma^*L-xF)^n)\}$. Then we have
\[
\epsilon_Z(L)=\max\{x\in\bR_{\geq 0}\mid y\in\Lambda_Z(L)\textrm{ for all }y\in[0,x]\}.
\]
\end{enumerate}
\end{lem}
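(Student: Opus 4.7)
For (1), my plan is to combine an elementary intersection-theoretic computation with a cohomological lower bound. Expand $(\sigma^*L-xF)^n$ using the binomial formula. Because $F$ is $\sigma$-exceptional and maps to the single point $p$, the cycle $\sigma_*(F^j)$ is supported at $p$ yet has positive dimension $n-j\geq 1$ for $1\leq j\leq n-1$, hence vanishes. The projection formula therefore kills every mixed term and leaves $(\sigma^*L-xF)^n=(L^n)+(-x)^nF^n$. The identity $(-F)^n=-\mult_Z X$ is then a restatement of the Hilbert--Samuel multiplicity via $\sigma_*\cO_{\hX}(-kF)=\overline{I_Z^k}$, whose asymptotic colength coincides with that of $I_Z^k$. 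For the volume lower bound, for any rational $x=p/q$ I would take the exact sequence
\[
 0\to I_Z^{mp}\otimes\cO_X(mqL)\to \cO_X(mqL)\to \cO_X(mqL)/I_Z^{mp}\to 0
\]
and observe that $h^0(\hX,\,mq\sigma^*L-mpF)\geq h^0(X,mqL\otimes I_Z^{mp})\geq h^0(X,mqL)-\ell(\cO_X/I_Z^{mp})$. Dividing by $m^n/n!$ and letting $m\to\infty$ yields $\vol_{\hX}(q\sigma^*L-pF)\geq q^n(L^n)-p^n\mult_Z X$; homogeneity of $\vol$ in scaling $q\sigma^*L-pF$ and the continuity of $\vol_{\hX}$ on $N^1(\hX)_\bR$ then extend the bound to all $x\geq 0$.

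For (2), write $\epsilon:=\epsilon_Z(L)$. The inclusion $[0,\epsilon]\subseteq\Lambda_Z(L)$ is the easy direction: for $y\in[0,\epsilon)$ the class $\sigma^*L-yF$ is ample, and at $y=\epsilon$ it is nef as a limit of ample classes; since a nef $\bR$-divisor $D$ always satisfies $\vol(D)=(D^n)$ (both sides vanishing when $D$ is not big), every such $y$ lies in $\Lambda_Z(L)$. For the reverse direction I would show that for every $\delta>0$ the interval $[0,\epsilon+\delta]$ is \emph{not} contained in $\Lambda_Z(L)$. Bigness of $\sigma^*L-yF$ for $y$ close to $\epsilon$ is guaranteed by the openness of the big cone, so the real question is whether some $y$ slightly greater than $\epsilon$ can still lie in $\Lambda_Z(L)$. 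The key input is the strict inequality
\[
 \vol(D) > (D^n) \qquad \text{for every big $\bR$-divisor $D$ which is not nef},
\]
which rules this out. Combined with the observation that, along the ray $\{\sigma^*L-yF\}$, convexity of the nef cone together with the openness of the ample cone force the ample threshold to coincide with the nef threshold, one concludes that $\sigma^*L-yF$ is big and non-nef for all $y\in(\epsilon,\epsilon+\delta)$ sufficiently close to $\epsilon$, so $\vol>((\cdot)^n)$ there and $y\notin\Lambda_Z(L)$.

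The main obstacle is the strict-inequality input above: it is not elementary and relies on the theory of divisorial Zariski decomposition and positive intersection products (Boucksom; cf.\ Lazarsfeld, \emph{Positivity in Algebraic Geometry}, Vol.~II). This is, however, the same ingredient underlying Fujita's smooth-point version \cite[Theorem~2.3]{fuj15}, so no substantially new idea is needed; the generalization to an arbitrary $0$-dimensional thickening of $p$ requires only checking that the intersection-theoretic computation in (1) and the cohomological estimate go through, which they do without change.
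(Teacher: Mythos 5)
Your part (1) is fine and is only a cosmetic variant of the paper's argument: you run the colength estimate downstairs on $X$ via $0\to I_Z^{mp}\otimes\cO_X(mqL)\to\cO_X(mqL)\to\cO_X(mqL)/I_Z^{mp}\to 0$, whereas the paper runs the analogous long exact sequence upstairs on $\hX$ and computes $h^0(jF,\cO_{jF})=\ell(\cO_X/I_Z^j)+\const$; both give the same bound, and your intersection-number computation matches the paper's appeal to Ramanujam.

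Part (2) has a genuine gap. Your ``key input''
\[
\vol(D)>(D^n)\quad\text{for every big non-nef }\bR\text{-divisor }D
\]
is false in dimension $\geq 3$ (it is a surface phenomenon, where it follows from Zariski decomposition --- likely the source of the confusion). Concretely, let $\sigma:\hX\to X$ be the blow-up of a smooth point of a smooth projective threefold with exceptional divisor $E\cong\bP^2$, and let $D=\sigma^*A+E$ with $A$ ample. Then $\sigma_*\cO_{\hX}(mD)=\cO_X(mA)$, so $\vol(D)=(A^3)>0$ and $D$ is big; $D\cdot\ell=-1$ for a line $\ell\subset E$, so $D$ is not nef; yet $(D^3)=(A^3)+E^3=(A^3)+1>\vol(D)$. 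So even the non-strict inequality $\vol(D)\geq(D^n)$ fails for general big $D$, and the strict version you need cannot be quoted. You also misattribute the ingredient: Fujita's Theorem 2.3 (and the paper's Lemma \ref{volume}) does not use Zariski-type decompositions but rather the de Fernex--K\"uronya--Lazarsfeld criterion \cite{dfkl07}: ampleness of a $\bQ$-divisor $D$ is equivalent to $h^i(m(D-tA'))=o(m^n)$ for all $i>0$, all small $t>0$ and some ample $A'$. The paper's route is: if $\vol=(\cdot)^n$ at a point $b$, then asymptotic Riemann--Roch together with Serre vanishing on the thickenings $jF$ forces $h^i(\hX,k(\sigma^*L-bF))=o(k^n)$ for all $i\geq 1$; applying this for $b$ slightly below $a=\max\{x:[0,x]\subseteq\Lambda_Z(L)\}$ and invoking \cite{dfkl07} with $A'=\sigma^*L-\delta F$ shows $\sigma^*L-(a-\epsilon)F$ is ample, whence $a\leq\epsilon_Z(L)$. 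Note also that this direction genuinely uses membership in $\Lambda_Z(L)$ on a whole subinterval below $a$, not merely the behaviour of $\vol$ just above $\epsilon_Z(L)$; your proposed argument, even if its input were true, would be proving a different (stronger) statement than the one needed. To repair the proof you should replace your strict-inequality step by the cohomological argument via \cite{dfkl07}.
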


\begin{proof} The proof goes along the same line as \cite[Theorem 2.3]{fuj15}.

(1) The equality $(\sigma^*L-xF)^n)=(L^n)-\mult_Z X \cdot x^n$ follows from
the fact that $\mult_Z(X)=(-1)^{n-1}(F^n)$ (see \cite{ram73}).

For the inequality part, we can assume that $x\in\bQ_{>0}$ since the function $\vol_{\hX}(\sigma^*L-xF)$ is continuous. Take any sufficiently
large $k\in\bZ_{>0}$ with $kx\in\bZ_{>0}$ and $kL$ Cartier.
Notice that we have the long exact sequence
\[
0\to H^0(\hX,\sigma^*(kL)-kxF)\to H^0(\hX, \sigma^*(kL))\to H^0(kxF,\sigma^*(kL)|_{kxF})\to\cdots\numberthis \label{longexseq}
\]
Thus 
\begin{align*}
h^0(\hX,\sigma^*(kL)-kxF)& \geq h^0(\hX, \sigma^*(kL))-
h^0(kxF,\sigma^*(kL)|_{kxF})\\
& = h^0(X,kL)-h^0(kxF,\cO_{kxF})
\end{align*}
Consequently,
\[
\vol_{\hX}(\sigma^*L-xF)\geq\vol_X(L)-\lim_{k\to\infty}\frac{ h^0(kxF,\cO_{kxF})}{(kx)^n/n!} x^n.
\]
Hence it suffices to show that 
\[
\lim_{j\to\infty}\frac{h^0(jF,\cO_{jF})}{j^n/n!}=\mult_Z X.
\]

We look at the exact sequence
\[
 0\to \cO_{\hX}(-jF)|_{F}\to\cO_{(j+1)F}\to\cO_{jF}\to 0 \numberthis \label{jF}
\]
The isomorphism $F\cong\Proj\bigoplus_{m\geq 0} I_Z^m/I_Z^{m+1}$
yields $\cO_{\hX}(-jF)|_{F}\cong\cO_F(j)$. Since $H^1(F,\cO_F(j))=0$
for $j\gg0$, we have an exact sequence
\[
0\to H^0(F,\cO_F(j))\to H^0((j+1)F,\cO_{(j+1)F})\to H^0(jF,\cO_{jF})\to 0
\]
Hence $h^0((j+1)F,\cO_{(j+1)F})-h^0(jF,\cO_{jF})=h^0(F,\cO_F(j))
=\ell(I_Z^j/I_Z^{j+1})$ for $j\gg 0$. As a result,
\[
 h^0(jF,\cO_{jF})=\ell(\cO_X/I_Z^j)+\const
\]
for $j\gg0$. Hence we prove (1).

\medskip

(2) Assume $\vol_{\hX}(\sigma^*L-xF)=(L^n)-\mult_Z X\cdot x^n$ for some
$x\in\bQ_{>0}$. We will show that 
\begin{equation}\label{cohgrowth}
h^i(\hX,\sigma^*(kL)-kxF)=o(k^n)\quad \textrm{for }i\geq 1,\, k\gg 0
\textrm{ with }kx\in \bZ_{>0}.
\end{equation}
For $i\geq 2$, the long exact sequence \eqref{longexseq} yields
\[
 h^i(\hX,\sigma^*(kL)-kxF)\leq h^{i-1}(kxF,\cO_{kxF})+ h^i(\hX,\sigma^*(kL)).
\]
Since $\sigma^*(kL)$ is nef, we know that $h^i(\hX,\sigma^*(kL))=
o(k^n)$. Serre vanishing theorem implies that $H^{i-1}(F,\cO_F(j))=0$ for $i\geq 2$, $j\gg 0$. Then the exact sequence \eqref{jF} implies that 
\[
 H^{i-1}((j+1)F,\cO_{(j+1)F})\cong H^{i-1}(jF,\cO_{jF})
\]
for $i\geq 2$, $j\gg 0$. As a result, $h^{i-1}(kxF,\cO_{kxF})$ is  
constant for $k\gg 0$. Therefore, $h^i(\hX,\sigma^*(kL)-kxF)=o(k^n)$ for any $i\geq 2$.

By the asymptotic Riemann-Roch theorem we know that
\begin{align*}
 \lim_{k\to\infty}\frac{\chi(\hX,\sigma^*(kL)-kxF)}{k^n/n!} & =((\sigma^*L-xF)^n)\\
 & = (L^n)-\mult_Z X\cdot x^n\\
 & = \vol_{\hX}(\sigma^*L-xF).
\end{align*}
Hence we have
\[
\lim_{k\to\infty}\frac{1}{k^n}\sum_{i=1}^n (-1)^i h^i(\hX,\sigma^*(kL)-kxF)=0.
\]
Since $h^i(\hX,\sigma^*(kL)-kxF)=o(k^n)$ for $i\geq 2$, we conclude that $h^1(\hX,\sigma^*(kL)-kxF)=o(k^n)$ as well.

Now let us prove (2). Denote by $a$ the right-hand side of the equation
in (2). For any nef divisor, its volume is equal to its self intersection
number. Hence it is obvious that $\epsilon_Z(L)\leq a$. Then it suffices to 
show that $a\leq\epsilon_Z(L)$. Equivalently, we only need to show that
for $\epsilon>0$ sufficiently small with $a-\epsilon\in\bQ_{>0}$, 
$\sigma^*L-(a-\epsilon)F$ is ample. Fix $\delta\in\bQ_{>0}$ such that
$\delta<\epsilon_Z(L)$, that is, $\sigma^*L-\delta F$ is ample.
By \cite[Theorem A]{dfkl07}, we only need to show that
\begin{equation}\label{cohgrowth2}
 h^i\big(\hX,m\big((\sigma^*L-(a-\epsilon)F)-t(\sigma^*L-\delta F)\big)
 \big)=o(m^n)
\end{equation}
for any $i>0$, any sufficiently small $t\in\bQ_{>0}$ and suitably
divisible $m$. Denote $b:=(a-\epsilon+t\delta)/(1-t)$.
Notice that
\[
 \frac{1}{1-t}\big((\sigma^*L-(a-\epsilon)F)-t(\sigma^*L-\delta F)\big)
 =\sigma^*L-\frac{a-\epsilon+t\delta}{1-t} F=\sigma^*L-bF.
\]
It is clear that $b<a$ for $t\in\bQ_{>0}$ sufficiently small. Hence from
the description of $a$ we get:
\[
 \vol_{\hX}(\sigma^*L-b F)=(L^n)-\mult_Z X \cdot b^n.
\]
Then applying \eqref{cohgrowth} yields that $h^i(\hX,\sigma^*(kL)-kbF)=o(k^n)$
for $i>0$, sufficiently small $t\in\bQ_{>0}$ and suitably divisible $k$.
In other words, \eqref{cohgrowth2} is true under the same condition for
$i$, $t$ and $m=(1-t)k$. Hence we prove the lemma.
\end{proof}

\subsection{K-semistability and Ding-semistability}\label{KDing}

In this section we recall the definition of K-semistability, and
its recent equivalence Ding-semistability.

\begin{defn}[{\cite{tia97, don02}, see also \cite{lx14}}]\label{defKstable}
Let $X$ be an $n$-dimensional $\bQ$-Fano variety. 
\begin{enumerate}[label=(\alph*)]
\item Let $r\in \bQ_{>0}$ such that $L:=-rK_X$ is Cartier. 
A \textit{test configuration} (resp. a \textit{semi test configuration})
of $(X, L)$ consists of the following data:
\begin{itemize}
\item a variety $\cX$ admitting a $\bG_m$-action and a 
$\bG_m$-equivariant morphism $\pi: \cX\rightarrow \bA^1$, 
where the action of $\bG_m$ on $\bA^1$ is given by the standard 
multiplication;
\item a $\bG_m$-equivariant $\pi$-ample (resp. $\pi$-semiample) line 
bundle $\cL$ on $\cX$ such that $(\cX, \cL)|_{\pi^{-1}
(\bA^1\setminus\{0\})}$ is equivariantly isomorphic to 
$(X, L)\times (\bA^1\setminus\{0\})$ with the natural $\bG_m$-action.
\end{itemize}
Moreover, if $\cX$ is normal in addition, then we say that $(\cX,\cL)/\bA^1$
is a \textit{normal test configuration} (resp. a \textit{normal semi 
test configuration}) of $(X,L)$.

\item 
Assume that $(\cX, \cL)\rightarrow \bA^1$ is a normal test 
configuration. Let $\bar{\pi}: (\bar{\cX}, \bar{\cL})\rightarrow \bP^1$ be the
natural equivariant compactification of $(\cX, \cL)\rightarrow \bA^1$.
The \textit{CM weight} (equivalently, \emph{Donaldson-Futaki invariant}) of $(\cX, \cL)$ is defined by the intersection formula:
\[
\CM(\cX, \cL):=\frac{1}{(n+1)((-K_X)^n)}\left(\frac{n}{r^{n+1}}
(\bar{\cL}^{n+1})+\frac{n+1}{r^n}(\bar{\cL}^n\cdot K_{\bar{\cX}/\bP^1})\right).
\]
\item
\begin{itemize}
\item The pair $(X, -K_X)$ is called \textit{K-semistable} if 
$\CM(\cX, \cL)\ge 0$ for any normal test configuration
$(\cX, \cL)/\bA^1$ of $(X, -rK_X)$.
\item The pair $(X, -K_X)$ is called \textit{K-polystable} if 
$\CM(\cX, \cL)\ge 0$ for any normal test configuration 
$(\cX, \cL)/\bA^1$ of $(X, -rK_X)$, and the equality 
holds if and only if $\cX\cong X\times\bA^1$.
\end{itemize}
\end{enumerate}
\end{defn}

In \cite{din88}, Ding introduced the so-called \emph{Ding
functional} on the space of K\"ahler metrics on Fano manifolds and showed that 
K\"ahler-Einstein metrics are critical points of the Ding functional.
Recently, Berman \cite{ber12} established a formula expressing CM-weights
in terms of the slope of Ding functional along a geodesic ray in the
space of all bounded positively curved metrics on the anti-canonical
$\bQ$-line bundle of a $\bQ$-Fano variety. Later on, this slope was called \emph{Ding invariant}
by Fujita \cite{fuj15} in order to define the concept of \emph{Ding-semistability}.
Next we will recall the recent work by Berman \cite{ber12} and Fujita \cite{fuj15}.
\begin{defn}[\cite{ber12, fuj15}] \label{ding}
Let $X$ be an $n$-dimensional $\bQ$-Fano variety.
\begin{enumerate}[label=(\alph*)]
\item
Let $(\cX, \cL)/\bA^1$ be a normal semi test configuration 
of $(X, -rK_X)$ and $(\bar{\cX}, \bar{\cL})/\bP^1$ be its natural
compactification. Let $D_{(\cX, \cL)}$ be the $\bQ$-divisor on $\cX$
satisfying the following conditions:
\begin{itemize}
\item The support ${\rm Supp}D_{(\cX, \cL)}$ is contained in $\cX_0$. 
\item The divisor $-r D_{(\cX, \cL)}$ is a $\bZ$-divisor corresponding
to the divisorial sheaf $\bar{\cL}(r K_{\bar{\cX}/\bP^1})$.
\end{itemize}
\item 
The \textit{Ding invariant} $\Ding(\cX, \cL)$ of $(\cX, \cL)/\bA^1$ is defined
as
\[
\Ding(\cX, \cL):=\frac{-(\bar{\cL}^{n+1})}{(n+1)r^{n+1}((-K_X)^n)}
-\left(1-\lct(\cX, D_{(\cX,\cL)}; \cX_0)\right).
\]
Here $\lct(\cX, D_{(\cX,\cL)}; \cX_0)$ is defined in the sense of \eqref{sublct}.
\item
$X$ is called \textit{Ding-semistable} if $\Ding(\cX, \cL)\ge 0$ for 
any normal test configuration $(\cX, \cL)/\bA^1$ of $(X, -rK_X)$.
\end{enumerate}
\end{defn}

The following theorem illustrates relations among the existence of K\"ahler-Einstein metrics,
K-semistability and Ding-semistability of $\bQ$-Fano varieties. 

\begin{thm}\label{DingKE}
\begin{enumerate}
 \item If a $\bQ$-Fano variety $X$ admits a K\"ahler-Einstein
metric, then $X$ is Ding-semistable.
 \item A $\bQ$-Fano variety is K-semistable if and only if it is Ding-semistable.
\end{enumerate}
\end{thm}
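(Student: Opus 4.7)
My plan is to handle the two parts separately: part (1) uses analytic tools (the Ding functional and its convexity along weak geodesics), while part (2) is purely algebraic, based on a direct intersection-theoretic comparison of $\CM$ and $\Ding$ combined with an MMP-based reduction to special test configurations.

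For part (1), assume $X$ carries a K\"ahler-Einstein metric with associated potential $\varphi_{KE}$ on $-rK_X$. Given a normal test configuration $(\cX,\cL)/\bA^1$ of $(X,-rK_X)$, I would attach to it a weak geodesic ray $\{\varphi_t\}_{t\geq 0}$ of bounded positively curved metrics on $-rK_X$ starting at $\varphi_{KE}$, constructed from the $\bG_m$-orbit on $\cX$ and an upper semicontinuous envelope, as in \cite{ber12}. The central input is Berman's slope formula, which identifies the asymptotic slope of the Ding functional along $\{\varphi_t\}$ with $\Ding(\cX,\cL)$. Since $\varphi_{KE}$ is a global minimizer of the Ding functional (an equivalent variational characterization of the KE equation), and the Ding functional is convex along weak geodesics by Berndtsson's theorem, the slope at infinity must be nonnegative. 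This yields $\Ding(\cX,\cL)\geq 0$ for every normal test configuration.

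For part (2), the implication Ding-ss $\Rightarrow$ K-ss comes from a direct intersection calculation: for any normal test configuration one expands the difference $\CM(\cX,\cL)-\Ding(\cX,\cL)$ and identifies it with a manifestly nonnegative error term measuring how far the central fiber is from being reduced, irreducible, and log canonical with respect to $D_{(\cX,\cL)}$. The converse K-ss $\Rightarrow$ Ding-ss is the harder direction and proceeds via a Li-Xu style MMP reduction: starting from an arbitrary normal test configuration $(\cX,\cL)$, one runs a relative MMP on $\cX$ over $\bA^1$ with appropriate scaling to produce a \emph{special} test configuration $(\cX^s,\cL^s)$ whose central fiber is an irreducible $\bQ$-Fano variety and whose Ding invariant satisfies $\Ding(\cX^s,\cL^s)\leq \Ding(\cX,\cL)$. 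For special test configurations the defect above vanishes, so $\CM=\Ding$ exactly, and thus K-semistability forces $\Ding(\cX^s,\cL^s)\geq 0$, hence $\Ding(\cX,\cL)\geq 0$.

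The main obstacle I expect is the MMP step in part (2): controlling the behavior of $\Ding$ through the birational modifications requires showing that each divisorial contraction, flip, or Fano-type contraction moves $(\bar\cL^{n+1})/((-K_X)^n)$ and $1-\lct(\cX,D_{(\cX,\cL)};\cX_0)$ in a compatible way so that $\Ding$ does not increase, and that the process terminates at a special configuration with $\bQ$-Fano central fiber. This is the technical heart of Fujita's argument in \cite{fuj15} and of Li's independent proof, relying on the relative MMP with scaling and Kawamata-Viehweg vanishing. The analytic ingredients of part (1), by contrast, are comparatively standard once Berman's slope formula is in hand.
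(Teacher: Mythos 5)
Your proposal is correct and matches the paper's treatment: the paper does not reprove this theorem but simply cites Berman \cite{ber12} for part (1) and for the inequality between $\CM$ and $\Ding$ giving Ding-semistability $\Rightarrow$ K-semistability, and \cite{bbj15} together with the Li--Xu MMP \cite{lx14} (see also \cite[Section 3]{fuj16}) for the converse, which is exactly the architecture you sketch (convexity of the Ding functional along weak geodesics plus the slope formula, the intersection-theoretic comparison $\CM\geq\Ding$, and the reduction to special test configurations along which $\Ding$ does not increase). The only minor point is that you should write the needed inequality as $\CM(\cX,\cL)\geq\Ding(\cX,\cL)$ --- which you do, correctly --- whereas the paper's proof states it in the reversed order, apparently a typo.
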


\begin{proof}
 (1) is proved by Berman in \cite{ber12} (see also \cite[Theorem 3.2]{fuj15}).
 
 (2) The direction that Ding-semistability implies K-semistability
 is proved by Berman in \cite{ber12} using the inequality that 
 $\Ding(\cX,\cL)\geq \CM(\cX,\cL)$ for any normal test configuration
 $(\cX,\cL)$ (see also \cite[Theorem 3.2.2]{fuj15}). The direction that
 K-semistability implies Ding-semistability is proved in \cite{bbj15}
 based on \cite{lx14} (see also \cite[Section 3]{fuj16}). 
\end{proof}

\subsection{Filtrations}

The use of filtrations to study stability notions was initiated by Witt
Nystr\"om in \cite{wn12}.
We recall the relevant definitions about filtrations after \cite{bc11, wn12}
(see also \cite{bhj15} and \cite[Section 4.1]{fuj15}).
\begin{defn}\label{defil}
 A \textit{good filtration} of a graded $\bC$-algebra 
 $S=\bigoplus_{m=0}^{\infty}S_m$ is a decreasing, left continuous, multiplicative
and linearly bounded $\bR$-filtrations of $S$. In other words, for each $m\ge 0\in \bZ$, there is a family of subspaces $\{\cF^{x} S_m\}_{x\in \bR}$ of $S_m$ such that:
\begin{itemize}
\item $\cF^x S_m\subseteq \cF^{x'}S_m$, if $x\ge x'$;
\item $\cF^x S_m=\bigcap_{x'<x}\cF^{x'} S_m$; 
\item $\cF^x S_m\cdot \cF^{x'} S_{m'}\subseteq \cF^{x+x'} S_{m+m'}$, for any $x, x'\in \bR$ and $m, m'\in \bZ_{\ge 0}$;
\item $e_{\min}(\cF)>-\infty$ and $e_{\max}(\cF)<+\infty$, where $e_{\min}(\cF)$ and $e_{\max}(\cF)$ are defined by the following 
operations:

\begin{equation}
\def\arraystretch{1.5}
\begin{array}{l}
e_{\min}(S_m,\cF):=\inf\{t\in\bR: \cF^t S_m\neq S_m\}; \\
e_{\max}(S_m,\cF):=\sup\{t\in\bR: \cF^t S_m\neq 0\};\\
\displaystyle e_{\min}(\cF)=e_{\min}(S_{\bullet}, \cF):=
\liminf_{m\rightarrow \infty} \frac{e_{\min}(S_m, \cF)}{m}; \\
\displaystyle e_{\max}(\cF)=e_{\max}(S_{\bullet}, \cF):=
\limsup_{i\rightarrow \infty} \frac{e_{\max}(S_m, \cF)}{m}. 
\end{array}
\end{equation}

\end{itemize}
\end{defn}

Define $S^{(t)}:=\bigoplus_{k=0}^{\infty} \cF^{kt} S_k$. When we want to emphasize the dependence of $ S^{(t)}$ on the filtration $\cF$, we also denote $ S^{(t)}$ by $\cF S^{(t)}$.
The following concept of volume will be important for us:
\begin{equation}
\vol\left( S^{(t)}\right)=\vol\left(\cF S^{(t)}\right):=
\limsup_{k\rightarrow\infty}\frac{\dim_{\bC}\cF^{mt}S_m}{m^{n}/n!}.
\end{equation}

Now assume $L$ is an ample line bundle over $X$ and 
$S=\bigoplus_{m=0}^{\infty} H^0(X, L^m)=\bigoplus_{m=0}^{\infty} S_m$
is the section ring of $(X, L)$. Then following \cite{fuj15}, 
we can define a sequence of ideal sheaves on $X$:
\begin{equation}\label{filtideal}
I^{\cF}_{(m,x)}={\rm Image}\left(\cF^xS_m\otimes L^{-m}\rightarrow \cO_X\right), 
\end{equation}
and define $\overline{\cF}^x S_m:=H^0(V, L^m\cdot I^{\cF}_{(m,x)})$ to be the saturation of $\cF^x S_m$ such that
$\cF^x S^m\subseteq \overline{\cF}^x S_m$.
$\cF$ is called \textit{saturated} if $\overline{\cF}^x S_m=\cF^x S_m$ for any $x\in \bR$ and $m\in \bZ_{\ge 0}$. 
Notice that with the notation above we have:
\[
\vol\left(\overline{\cF} S^{(t)}\right):=\limsup_{k\rightarrow\infty}
\frac{\dim_{\bC}\overline{\cF}^{kt}H^0(X, kL)}{k^n/n!}.
\]
\subsection{Normalized volume of real valuations}\label{secval}
Let $(X,p)$ be a normal $\bQ$-Gorenstein singularity with $\dim X=n$.
A \textit{real valuation} $v$ on the function field $\bC(X)$ is a map
$v:\bC(X)\to\bR$, satisfying:
\begin{itemize}
 \item $v(fg)=v(f)+v(g)$;
 \item $v(f+g)\geq \min\{v(f),v(g)\}$;
 \item $v(\bC^*)=0$.
\end{itemize}

Denote by $\cO_v:=\{f\in\bC(X): v(f)\geq 0\}$ the \textit{valuation ring}
of $v$. Denote by $(\cO_p,\fm_p)$ the local ring of $X$ at $p$. 
We say that a real valuation $v$ on $\bC(X)$ is \textit{centered at}
$p$ if the local ring $\cO_p$ is dominated by $\cO_v$. In other words,
$v$ is nonnegative on
$\cO_p$ and strictly positive on $\fm_p$.
Denote by $\Val_{X,p}$ the set of real valuations centered at $p$. Given a valuation $v\in\Val_{X,p}$, define the valuation ideals for any $x\in\bR$ as follows:
\[
\fa_x(v):=\{f\in\cO_X: v(f)\geq x\}.
\]
The \textit{volume} of the valuation $v$ is defined as
\[
\vol(v):=\lim_{x\to+\infty}\frac{\ell(\cO_X/\fa_x(v))}{x^n/n!}.
\]
By \cite{els03, mus02, cut13}, the limit on the right hand side exists and is equal to the multiplicity of the graded family of ideals
$\fa_\bullet(v)$:
\[
\vol(v)=\lim_{m\to\infty}\frac{\mult(\fa_m(v))}{m^n}=:\mult(\fa_\bullet(v)).
\]

Following \cite{jm12,bdffu}, we can define the \textit{log discrepancy}
for any valuation $v\in\Val_{X}$. This is achieved in three steps in
\cite{jm12,bdffu}. Firstly, for a divisorial valuation $\ord_E$ associated
to a prime divisor $E$ over $X$, define $A_X(E):=1+\ord_E(K_{Y/X})$,
where $\pi:Y\to X$ is a resolution of $X$ containing $E$.
Next we define the log discrepancy of a quasi-monomial valuation (also called Abhyankar valuation).
For a resolution $\pi:Y\to X$, let $\underline{y}:=(y_1,\cdots,y_r)$
be a system of algebraic coordinate of a point $\eta\in Y$. By
\cite[Proposition 3.1]{jm12}, for every $\alpha=(\alpha_1,\cdots,\alpha_r)\in\bR_{\geq 0}^n$
one can associate a unique valuation $\val_{\alpha}=\val_{\underline{y},\alpha}\in\Val_X$
with the following property: whenever $f\in\cO_{Y,\eta}$ is written 
in $\widehat{\cO_{Y,\eta}}$ as $f=\sum_{\beta\in\bZ_{\geq 0}^r}c_\beta y^{\beta}$,
with each $c_\beta\in\widehat{\cO_{Y,\eta}}$ either zero or a unit, we have
\[
\val_{\alpha}(f)=\min\{\langle\alpha,\beta\rangle\mid c_\beta\neq 0\}.
\]
Such valuations are called \emph{quasi-monomial valuations} (or 
equivalently, \emph{Abhyankar valuations} by \cite{els03}).
Let $(Y,D=\sum_{k=1}^N D_k)$ be a log smooth model of $X$, i.e. $\pi:Y\to X$
is an isomorphism outside of the support of $D$.
We denote by $\QM_\eta(Y,D)$ the set of all quasi-monomial valuations $v$ that can be
described at the point $\eta\in Y$ with respect to coordinates $(y_1,\cdots,y_r)$ such that each $y_i$
defines at $\eta$ an irreducible component of $D$ (hence $\eta$ is the generic point of a connected
component of the intersection of some of the $D_i$'s). We put $\QM(Y,D):=\bigcup_{\eta}\QM(Y,D)$.
Suppose $\eta$ is a generic point of a connected component of
$D_{i_1}\cap\cdots\cap D_{i_r}$, then the log discrepancy of $\val_\alpha$
is defined as
\[
 A_X(\val_{\alpha}):=\sum_{j=1}^r\alpha_j\cdot A_X(\ord_{D_{i_j}})
 =\sum_{j=1}^r \alpha_j\cdot(1+\ord_{D_{i_j}}(K_{Y/X})).
\]
Finally, in \cite{jm12} Jonsson and Musta\c{t}\u{a} showed that there 
exists a retraction map $r_{Y,D}:\Val_X\to\QM(Y,D)$ for any log smooth model
$(Y,D)$ over $X$, such that it induces a homeomorphism $\Val_X\to\varprojlim_{(Y,D)}\mathrm{QM}(Y,D)$.
For any real valuation $v\in\Val_X$, we define 
\[
 A_X(v)=\sup_{(Y,D)} A_X(r_{(Y,D)}(v))
\]
where $(Y,D)$ ranges over all log smooth models over $X$.
For details, see \cite{jm12} and \cite[Theorem 3.1]{bdffu}. It is possible that $A_X(v)=+\infty$ for some $v\in \Val_X$,
see e.g. \cite[Remark 5.12]{jm12}.

Following \cite{li15a}, the \textit{normalized volume function}
$\hvol(v)$ for any
$v\in\Val_{X,p}$ is defined as
\[
\hvol(v):=\begin{cases}
           A_X(v)^n\cdot\vol(v), & \textrm{if }A_X(v)<+\infty;\\
           +\infty, &\textrm{if }A_X(v)=+\infty.
          \end{cases}
\]
Notice that $\hvol(v)$ is rescaling invariant: $\hvol(\lambda v)=\hvol(v)$
for any $\lambda>0$.
By Izumi's theorem (see \cite{izu85,ree87,els03,jm12,bfj14,li15a}), one can show that $\hvol$ is uniformly 
bounded from below by a positive number on $\Val_{X,p}$.

\section{Proofs of main theorems}

\subsection{Proofs}\label{pf}

It was proved by Berman \cite{ber12} that the existence of a K\"ahler-Einstein metric
on a $\bQ$-Fano variety implies Ding-semistability (see Theorem \ref{DingKE}).
In this section, we will prove Theorem \ref{mainthm} and \ref{mainthm2} with the 
weaker assumption that $X$ is Ding-semistable.

The following result by Fujita is crucial for proving Theorem \ref{mainthm}. 
\begin{thm}[{\cite[Theorem 1.2]{fuj15}}]\label{beta}
Let $X$ be a $\bQ$-Fano variety. Assume that $X$ is Ding-semistable. Take any nonempty proper closed subscheme $\emptyset\neq Z\subsetneq X$ corresponds to an ideal sheaf $0\neq I_Z\subsetneq \cO_X$. Let $\sigma:\hX\to X$ be the blow up along $Z$, let $F\subset \hX$ be the Cartier divisor defined by the equation $\cO_{\hX}(-F)=\sigma^{-1}I_Z\cdot\cO_{\hX}$. Then we have $\beta(Z)\geq 0$,
where
\[
\beta(Z):=\lct(X;I_Z)\cdot\vol_X(-K_X)-\int_0^{+\infty}
\vol_{\hX}(\sigma^*(-K_X)-xF)dx.
\]
\end{thm}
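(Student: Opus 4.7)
The plan is to prove Theorem \ref{beta} by producing, from the ideal $I_Z$, a normal semi-test configuration (or, more conveniently, a filtration) of $(X,-rK_X)$ whose Ding invariant is, up to a positive normalization, exactly $\beta(Z)/((-K_X)^n)$, and then invoking Ding-semistability.

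Concretely, I would build the test configurations from the ``deformation to the normal cone'' of $Z$: for a rational parameter $c>0$, let $\cX_c\to\bA^1$ be a normalized blow-up of $X\times\bA^1$ along a flag ideal associated to $I_Z$ (for instance $I_Z\cdot\cO_{X\times\bA^1}+(t^c)$), equipped with the standard $\bG_m$-action on $\bA^1$. The blow-up $\hat X\to X$ from Lemma \ref{volume} appears naturally inside the central fiber. A polarization $\cL_c$ on $\cX_c$ is obtained by pulling back $-rK_X$ from $X$ and subtracting an appropriate multiple of the exceptional divisor; by Lemma \ref{volume}(2) it is $\pi$-ample on the ample range and $\pi$-semiample on the larger nef range.

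Next I would compute the two pieces of the Ding invariant in Definition \ref{ding}. A Fubini-type intersection calculation on the compactification $\bar\cX_c\to\bP^1$, slicing by levels of the exceptional divisor, expresses $-(\bar\cL_c^{n+1})/((n+1)r^{n+1}((-K_X)^n))$ in terms of $\int_0^{c/r}\vol_{\hat X}(\sigma^*(-K_X)-xF)\,dx$, while a log-resolution analysis of $(X,I_Z)$ pulled through the family identifies $1-\lct(\cX_c,D_{(\cX_c,\cL_c)};\cX_{c,0})$ with a linear function of $c$ whose slope involves $\lct(X;I_Z)$. Careful bookkeeping of the signs then shows that $\Ding(\cX_c,\cL_c)$ is, up to a positive scalar, a truncated form of $\beta(Z)/((-K_X)^n)$ in which the integral's upper limit is $c/r$ in place of $+\infty$, so Ding-semistability of $X$ yields this truncated inequality for every such $c$.

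To finish, I would let $c\to+\infty$. The cleanest way is to replace the discrete family $\{\cX_c\}$ by the saturated filtration $\cF^x H^0(X,-mrK_X):=H^0(X,-mrK_X\otimes I_Z^{\lceil x\rceil})$ and invoke the filtration version of Ding-semistability recalled in Section \ref{prelim} (following \cite{wn12,bhj15,fuj15}); this packages the limit automatically, and the integrand $\vol_{\hat X}(\sigma^*(-K_X)-xF)$ vanishes once $\sigma^*(-K_X)-xF$ leaves the pseudo-effective cone, so the improper integral is in fact supported on a bounded interval. The main obstacle will be the intersection identification of $(\bar\cL_c^{n+1})$ with the volume integral: one must handle the multiple components of the central fiber of $\bar\cX_c$ and convert a top self-intersection on the total space into a slice-by-slice volume integral on $\hat X$, leveraging Lemma \ref{volume}(1) to recognize the slice self-intersections as volumes on the nef range.
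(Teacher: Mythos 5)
First, a point of reference: the paper does not prove this statement at all --- Theorem \ref{beta} is quoted verbatim from \cite[Theorem 1.2]{fuj15} --- so the only meaningful comparison is with Fujita's argument and with the parallel computation this paper does write out, namely Proposition \ref{val1}, which runs the same machine for the filtration attached to a valuation instead of an ideal. Your second route (the filtration $\cF^x H^0(X,-mrK_X)=H^0(X,-mrK_X\otimes I_Z^{\lceil x\rceil})$ fed into Theorem \ref{fuj4.9}) is exactly Fujita's proof. But note that Theorem \ref{fuj4.9} outputs the sub-log-canonicity of $(X\times\bA^1,\cI_\bullet^{\cdot(1/r)}\cdot(t)^{\cdot d_\infty})$, not a numerical inequality; to extract $\beta(Z)\geq 0$ one must test this pair against a (quasi-monomial) valuation built from a divisor $E$ computing $\lct(X;I_Z)$ together with $\ord_t$, compute its log discrepancy on $X\times\bA^1$ and its values on the flag ideals $\cI_m$ and on $t$, and only then does $\lct(X;I_Z)$ appear multiplying the volume --- precisely the computation carried out in the proof of Proposition \ref{val1} with $v$ replaced by $\ord_E$. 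Your sentence about a ``log-resolution analysis identifying $1-\lct(\cX_c,D;\cX_{c,0})$ with a linear function of $c$'' gestures at this but does not supply it, and in the filtration formulation there is no single test configuration whose lct term you can read off; this extraction is the crux and is missing.

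Your first route has a more serious gap. A single (normalized) blow-up of $X\times\bA^1$ along $I_Z\cdot\cO_{X\times\bA^1}+(t^c)$ carries a relatively semiample polarization $\sigma^*(-rK_X)-aE$ only for $a$ up to (essentially) the Seshadri constant $\epsilon_Z$, so ``letting $c\to+\infty$'' is not available within honest semi test configurations; and on the semiample range the intersection-number computation yields $\int_0^{a}\bigl((\sigma^*(-K_X)-xF)^n\bigr)dx$ with $a\le\epsilon_Z$, i.e.\ the Ross--Thomas slope inequality, whose integrand is the self-intersection number rather than the volume. Since $\vol_{\hX}(\sigma^*(-K_X)-xF)$ strictly exceeds the self-intersection number beyond the nef cone and the integral in $\beta(Z)$ runs out to the pseudoeffective threshold, this deformation-to-the-normal-cone argument proves something strictly weaker than $\beta(Z)\geq 0$; Lemma \ref{volume}(1) only converts self-intersections to volumes inside the nef range, exactly where the two agree. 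Capturing the tail of the volume integral is what forces the filtration machinery, so the first half of your proposal should be discarded and the second half completed as above.
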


We will prove the following theorem, a stronger result that implies Theorem \ref{mainthm}.

\begin{thm}\label{main1}
Let $X$ be a Ding-semistable $\bQ$-Fano variety. 
Let $p\in X$ be a 
closed point. Let $Z$ be a closed subscheme of $X$ with $\Supp~Z=\{p\}$.
Then we have 
\begin{equation}\label{firstvol}
 ((-K_X)^n)\leq \left(1+\frac{1}{n}\right)^n\lct(X;I_Z)^n
 \mult_Z X.
\end{equation}
\end{thm}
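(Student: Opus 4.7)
The plan is to combine Fujita's inequality $\beta(Z) \geq 0$ from Theorem \ref{beta} with the pointwise volume bound from Lemma \ref{volume}(1), and then optimize. Concretely, Theorem \ref{beta} says
\[
\lct(X;I_Z)\cdot((-K_X)^n) \;\geq\; \int_0^{+\infty}\vol_{\hX}(\sigma^*(-K_X)-xF)\,dx,
\]
so it suffices to lower-bound the integral on the right by a suitable multiple of $((-K_X)^n)^{(n+1)/n}\cdot(\mult_Z X)^{-1/n}$.

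First, I would apply Lemma \ref{volume}(1) with $L=-K_X$ to obtain the pointwise bound
\[
\vol_{\hX}(\sigma^*(-K_X)-xF)\;\geq\;\max\bigl\{((-K_X)^n)-\mult_Z X\cdot x^n,\,0\bigr\}
\]
for all $x\geq 0$. Set $x_0:=\bigl(((-K_X)^n)/\mult_Z X\bigr)^{1/n}$, the unique positive root where the right-hand side vanishes. Discarding the contribution from $x>x_0$ and integrating explicitly,
\[
\int_0^{+\infty}\vol_{\hX}(\sigma^*(-K_X)-xF)\,dx \;\geq\; \int_0^{x_0}\bigl(((-K_X)^n)-\mult_Z X\cdot x^n\bigr)\,dx \;=\; \frac{n}{n+1}\,((-K_X)^n)\cdot x_0.
\]

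Then I would substitute this into Theorem \ref{beta} and divide through by $((-K_X)^n)>0$ to obtain
\[
\lct(X;I_Z) \;\geq\; \frac{n}{n+1}\,x_0 \;=\; \frac{n}{n+1}\left(\frac{((-K_X)^n)}{\mult_Z X}\right)^{1/n}.
\]
Raising both sides to the $n$-th power and rearranging yields the desired inequality $((-K_X)^n)\leq (1+1/n)^n\lct(X;I_Z)^n\mult_Z X$.

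Given that Theorem \ref{beta} and Lemma \ref{volume} are already available, there is no genuine obstacle: the proof is essentially a one-variable calculus optimization once the two ingredients are in place. The only small point to verify is that $((-K_X)^n)>0$ so that the division is legitimate, which follows from the ampleness of $-K_X$, and that $x_0$ is finite so the integral bound is meaningful, which follows from $\mult_Z X>0$ since $Z$ is a nonempty subscheme of positive codimension.
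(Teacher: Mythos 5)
Your proposal is correct and follows essentially the same route as the paper: apply Theorem \ref{beta}, lower-bound the integral via Lemma \ref{volume}(1) by integrating $((-K_X)^n)-\mult_Z X\cdot x^n$ up to its positive root, and rearrange. No issues.
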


\begin{proof}
Let $\sigma:\hX\to X$ be the blow up along $Z$, 
and $F\subset\hX$ be the Cartier divisor defined by the 
equation $\cO_{\hX}(-F)=\sigma^{-1}I_Z\cdot\cO_{\hX}$.
By Theorem \ref{beta}, we have
\[
\lct(X;I_Z)\cdot((-K_X)^n)\geq
\int_0^{+\infty} \vol_{\hX}(\sigma^*(-K_X)-xF)dx.
\]
On the other hand, by Lemma \ref{volume}, we have
\begin{align*}
\int_0^{+\infty} \vol_{\hX}(\sigma^*(-K_X)-xF)dx & \geq 
\int_0^{\sqrt[n]{((-K_X)^n)/\mult_Z X}} \big(((-K_X)^n)-\mult_Z X\cdot x^n\big) dx\\
& = \frac{n}{n+1}((-K_X))^n\cdot\sqrt[n]{((-K_X)^n)/\mult_Z X}
\end{align*}
Hence we get the desired inequality
\[
 ((-K_X)^n)\leq \left(1+\frac{1}{n}\right)^n\lct(X;I_Z)^n
 \mult_Z X.
\]
\end{proof}

\begin{rem}\label{volequality}
 As we see in the proof above, if the equality of \eqref{firstvol} holds,
 then 
 \[
 \vol_{\hX}(\sigma_*(-K_X)-xF)=\max\{((-K_X)^n)-\mult_Z X\cdot x^n,0\}
 \]
 for any $x\in \bR_{\geq 0}$. Then by Lemma \ref{volume} (2) we have that
 $\epsilon_Z(-K_X)=(1+\frac{1}{n})\lct(X;I_Z)$.
\end{rem}
\medskip

The following result by Fujita is the main tool to prove Theorem \ref{mainthm2}.

\begin{thm}[{\cite[Theorem 4.9]{fuj15}}]\label{fuj4.9}
Let $X$ be a $\bQ$-Fano variety of dimension $n$.
Assume that $X$ is Ding-semistable. Let $\cF$ be a good graded
filtration of $S=\bigoplus_{m=0}^{+\infty} H^0(X, mL)$ where 
$L=-rK_X$ is Cartier. Then the pair $(X \times\bA^1, 
\cI_\bullet^{\cdot (1/r)} \cdot (t)^{\cdot d_\infty})$ is sub log canonical
in the sense of \cite[Definition 2.4]{fuj15}, where
\begin{eqnarray*}
&&\mathcal{I}_m=I^{\cF}_{(m, m e_{+})}+I^{\cF}_{(m, m e_{+}-1)}t^1+\cdots+I^{\cF}_{(m, m e_{-}+1)} t^{m(e_+-e_-)-1}+(t^{m(e_+-e_-)}),\\
&&d_{\infty}=1-\frac{e_+-e_-}{r}+\frac{1}{r^{n+1}((-K_X)^n)}\int_{e_-}^{e_+}\vol\left(\overline{\cF} S^{(t)}\right)dt, \\
\end{eqnarray*}
and $e_+, e_-\in \bZ$ with $e_+\ge e_{\max}(S_\bullet, \cF)$ and $e_-\le e_{\min}(S_\bullet, \cF)$.
\end{thm}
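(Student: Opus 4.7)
The plan is to deduce the sub log canonicity from Ding-semistability applied to a sequence of finite-level normal test configurations built out of the flag ideals $\cI_m$ appearing in the statement, and then to pass to the limit $m\to\infty$. This implements, in the filtration setting, the standard dictionary between normal test configurations of $(X,-rK_X)$ and flag ideals on $X\times\bA^1$ developed in \cite{lx14} and already used by Fujita in \cite{fuj15}; the content of the theorem is then that the asymptotic version of this dictionary survives the passage from a single finitely generated ideal to an arbitrary good filtration.

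More precisely, for each $m\geq 1$ with $mL$ Cartier, let $\sigma_m:\cX_m\to X\times\bA^1$ be the normalization of the blow-up along the flag ideal $\cI_m$ of the statement, with exceptional Cartier divisor $E_m$; set $\cL_m:=\sigma_m^* p_1^* L-\tfrac{1}{m}E_m$. Then $(\cX_m,\cL_m)/\bA^1$ is a normal semi test configuration of $(X,L)=(X,-rK_X)$, $\bG_m$-equivariant for the natural action scaling $t$. Ding-semistability (Definition~\ref{ding}) yields
\[
\lct\bigl(\cX_m,D_{(\cX_m,\cL_m)};\cX_{m,0}\bigr)-1\;\geq\;\frac{(\bar\cL_m^{n+1})}{(n+1)r^{n+1}((-K_X)^n)},
\]
and under the flag-ideal correspondence of \cite[\S 2]{fuj15} the left-hand side equals $\lct\bigl(X\times\bA^1;\cI_m^{\cdot 1/(rm)}\cdot (t)\bigr)-1$. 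Since $\{\cI_m\}_{m\geq 1}$ is a graded family on $X\times\bA^1$ (by multiplicativity of $\cF$), the lct converges by Jonsson--Musta\c{t}\u{a} \cite{jm13} to the asymptotic lct:
\[
\lct\bigl(X\times\bA^1;\cI_m^{\cdot 1/(rm)}\bigr)\;\xrightarrow[m\to\infty]{}\;\lct\bigl(X\times\bA^1;\cI_\bullet^{\cdot 1/r}\bigr).
\]
On the energy side, $(\bar\cL_m^{n+1})$ is computed on the natural $\bP^1$-compactification of $\cX_m$ via projection formula; it packages the dimensions $h^0(X,mL\otimes I^{\cF}_{(m,x)})$ for $x\in[me_-,me_+]$ as a Riemann sum that, by the very definition of the saturation $\overline{\cF}$, converges after normalization to $\frac{1}{r^{n+1}((-K_X)^n)}\int_{e_-}^{e_+}\vol(\overline{\cF}S^{(t)})\,dt-\frac{e_+-e_-}{r}$, i.e.\ to $d_\infty-1$. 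Combining, the limiting Ding inequality reads $\lct(X\times\bA^1;\cI_\bullet^{\cdot 1/r}\cdot (t))\geq d_\infty$, which is exactly the sub log canonicity of $(X\times\bA^1,\cI_\bullet^{\cdot 1/r}\cdot (t)^{\cdot d_\infty})$.

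\textbf{Main obstacle.} The technical heart is the energy computation together with the simultaneous passage to the limit in both terms. On each $\cX_m$ the sections contributing to $(\bar\cL_m^{n+1})$ are the unsaturated pieces $I^{\cF}_{(m,x)}$, whereas $d_\infty$ is written in terms of $\overline{\cF}$; bridging this requires a Fujita-approximation-type argument showing that the discrepancy between $\vol(\cF S^{(t)})$ and $\vol(\overline{\cF}S^{(t)})$ is asymptotically negligible inside the Riemann sum, by controlling the codimension of the base locus of $\cF^x S_m$ on $X$. A second subtlety is uniformity in $t\in[e_-,e_+]$ so that one may exchange limit and integral and match it with the asymptotic lct; this uses the linear boundedness $e_{\min}(\cF),e_{\max}(\cF)\in\bR$ built into the definition of a good filtration, together with the concavity/continuity of $t\mapsto\vol(\overline{\cF}S^{(t)})$ on its support, which allows a dominated-convergence argument and keeps the flag ideals $\cI_m$ under uniform control.
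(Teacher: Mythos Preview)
This theorem is not proved in the present paper; it is quoted verbatim from \cite[Theorem 4.9]{fuj15} (note the citation in the theorem header) and used here only as a black box to derive Proposition~\ref{val1} and Theorem~\ref{main2}. So there is no proof in this paper to compare your proposal against.

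That said, your outline is essentially the architecture of Fujita's original argument in \cite{fuj15}: approximate the filtration by the flag ideals $\cI_m$, build from each a normal (semi) test configuration by blowing up and normalizing, apply the Ding inequality at level $m$, and pass to the limit using the asymptotic $\lct$ of a graded family together with convergence of the energy to the integral defining $d_\infty$. Two small corrections are worth flagging. First, Ding-semistability as formulated here (Definition~\ref{ding}) is stated for $\pi$-ample polarizations, whereas your $\cL_m$ is only $\pi$-semiample; one handles this by the usual perturbation/limit argument, but it should be mentioned. Second, your ``main obstacle'' about saturated versus unsaturated pieces is a red herring: by definition $I^{\cF}_{(m,x)}$ is already the base ideal, and $H^0(X,mL\otimes I^{\cF}_{(m,x)})=\overline{\cF}^x S_m$, so the sections on $\cX_m$ that enter the computation of $(\bar{\cL}_m^{n+1})$ are precisely the saturated ones. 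No separate Fujita-approximation step is needed to pass from $\cF$ to $\overline{\cF}$; the saturation appears automatically once you write the energy as a Riemann sum in $h^0(X,mL\otimes I^{\cF}_{(m,x)})$.
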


We will follow the notation of Theorem \ref{fuj4.9}.
Let $v\in\Val_{X,p}$ be a real valuation centered at $p$.
Define an $\bR$-filtration $\cF_v$ of $S$ as
\[
\cF_v^x S_m:=H^0(X, L^m\cdot\fa_{x}(v)).
\]

\begin{lem}\label{goodfil}
The $\bR$-filtration $\cF_v$ of $S$ is decreasing, left-continuous,
multiplicative and saturated. Moreover, if $A_X(v)<+\infty$ then
$\cF_v$ is also linearly bounded.
\end{lem}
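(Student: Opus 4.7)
My plan is to transfer each of the four axioms in Definition \ref{defil} from the graded family $\{\fa_x(v)\}_{x\in\bR}$ of valuation ideals to $\cF_v^x S_m=H^0(X,L^m\cdot\fa_x(v))$, by applying the left-exact functor ``tensor with $L^m$ and take global sections''.

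Decreasingness, left-continuity, and multiplicativity should be essentially formal. I would deduce monotonicity from $\fa_x(v)\subseteq\fa_{x'}(v)$ whenever $x\geq x'$, left-continuity from the identity $\fa_x(v)=\bigcap_{x'<x}\fa_{x'}(v)$ combined with the compatibility of $H^0$ with intersections of subsheaves of a fixed sheaf, and multiplicativity from $\fa_x(v)\cdot\fa_{x'}(v)\subseteq\fa_{x+x'}(v)$ (itself a consequence of $v(fg)=v(f)+v(g)$) via the section-multiplication map
\[
H^0(X,L^m\cdot\fa_x(v))\otimes H^0(X,L^{m'}\cdot\fa_{x'}(v))\to H^0(X,L^{m+m'}\cdot\fa_{x+x'}(v)).
\]
For saturatedness, since $\cF_v^x S_m$ is already constructed as the $H^0$ of $L^m$ twisted by the ideal sheaf $\fa_x(v)$, I would note that the evaluation image $I^{\cF_v}_{(m,x)}\subseteq\cO_X$ is contained in $\fa_x(v)$ by construction, so
\[
\overline{\cF_v}^x S_m=H^0(X,L^m\cdot I^{\cF_v}_{(m,x)})\subseteq H^0(X,L^m\cdot\fa_x(v))=\cF_v^x S_m,
\]
while the reverse inclusion is automatic from the definition of the saturation.

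For linear boundedness I would argue the two sides separately. The lower bound requires no assumption on $A_X(v)$: because $v$ is centered at $p$, one has $v(f)\geq 0$ for every $f\in\cO_{X,p}$, hence $\fa_x(v)=\cO_X$ for every $x\leq 0$, and consequently $\cF_v^x S_m=S_m$ for $x\leq 0$. This forces $e_{\min}(S_m,\cF_v)\geq 0$ uniformly in $m$, yielding $e_{\min}(\cF_v)\geq 0>-\infty$.

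The hard part will be the upper bound $e_{\max}(\cF_v)<+\infty$, which is the only place where the hypothesis $A_X(v)<+\infty$ is needed. Here I would invoke an Izumi-type estimate (of the kind alluded to in Section \ref{secval}): finiteness of $A_X(v)$ produces a divisorial valuation $\ord_E$ centered at $p$ and a constant $C>0$ with $v(f)\leq C\cdot\ord_E(f)$ for every $f\in\cO_{X,p}$. This must be combined with a uniform bound $\ord_E(s)\leq C'm$ for every nonzero $s\in H^0(X,L^m)$ and every $m$, which holds for fixed ample $L$ and fixed divisorial $\ord_E$ because on any resolution extracting $E$ it reduces to the finiteness of the pseudo-effective threshold of $\sigma^*L$ along $E$. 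Composing these two inequalities gives $v(s)\leq CC'm$ uniformly, hence $e_{\max}(\cF_v)\leq CC'<+\infty$, completing the verification that $\cF_v$ is a good filtration.
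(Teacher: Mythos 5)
Your proposal is correct, and for everything except the very last step it coincides with the paper's argument: the first three axioms are transferred formally from $\{\fa_x(v)\}$, saturation follows from the inclusion $I^{\cF_v}_{(m,x)}\subseteq\fa_x(v)$ forced by the evaluation map, and $e_{\min}(\cF_v)\geq 0$ comes from $\fa_x(v)=\cO_X$ for $x\leq 0$. The one place you diverge is the bound $e_{\max}(\cF_v)<+\infty$. The paper invokes Li's Izumi-type estimate $v\leq cA_X(v)\,\ord_p$ from \cite[Theorem 3.1]{li15a} and then simply cites \cite{bkms14} for the finiteness of $e_{\max}(S_\bullet,\cF_{\ord_p})$; you instead compare $v$ with a divisorial valuation $\ord_E$ (which does follow from the same Izumi statement, e.g.\ by passing from $\ord_p$ to a component of the normalized blow-up of $\fm_p$) and then prove the linear bound $\ord_E(s)\leq C'm$ for $s\in H^0(X,mL)$ by hand: a section vanishing to order $tm$ along $E$ makes $\sigma^*L-tE$ pseudo-effective, and the pseudo-effective threshold is finite since pairing with $A^{n-1}$ for an ample $A$ on the resolution gives $t\,(E\cdot A^{n-1})\leq(\sigma^*L\cdot A^{n-1})$ with $(E\cdot A^{n-1})>0$. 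This makes your treatment of $e_{\max}$ slightly more self-contained at the cost of one extra reduction (from $\ord_p$ to a genuine divisorial valuation); the paper's version is shorter because it outsources exactly this finiteness to the literature. Either route is complete.
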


\begin{proof}
The decreasing, left-continuous and multiplicative properties of 
$\cF_v$ follows from the corresponding properties 
of $\{\fa_x(v)\}_{x\in\bR}$. 
To prove that $\cF_v$ is saturated, notice that the homomorphism
\[
 \cF_v^x S_m\otimes_{\bC}L^{-m}\twoheadrightarrow I_{(m,x)}^{\cF_v}
\]
induces the inclusion $I_{(m,x)}^{\cF_v}\subset\fa_x(v)$ for any
$x\in\bR$. Thus $\overline{\cF}^x S_m= H^0(X,L^m\cdot I_{(m,x)}^{\cF_v})
\subset\cF^x S_m$, which implies that $\cF_v$ is saturated.

If $A_X(v)<+\infty$, then by \cite[Theorem 3.1]{li15a} there exists a constant
$c=c(X,p)$ such that $v\leq cA_X(v)\ord_p$. So it is easy to see that
\[
e_{\max}(S_\bullet,\cF_v)\leq cA_X(v)\cdot e_{\max}(S_\bullet,\cF_{\ord_p})<+\infty.
\]
The second inequality follows from the work in \cite{bkms14}.
Next, it is clear that $\fa_x(v)=\cO_X$ for $x\leq 0$. Hence 
$\cF_v^x S_m=S_m$ for $x\leq 0$, which implies
$e_{\min}(S_\bullet,\cF_v)\geq 0$. Hence $\cF_v$ is linearly bounded.
\end{proof}

It is clear that $\cF_v^x S_m=S_m$ for $x\leq 0$. Hence we may choose
$e_-=0$. The graded sequence of ideal sheaves $\cI_\bullet$ on 
$X\times\bA^1$ becomes:
\[
 \cI_m=I_{(m,me_+)}^{\cF_v}+I_{(m,me_+-1)}^{\cF_v}t^1+\cdots+
 I_{(m,1)}^{\cF_v}t^{me_+ -1}+(t^{me_+}).
\]
We also have that 
\[
 d_{\infty}=1-\frac{e_+}{r}+\frac{1}{r^{n+1}((-K_X)^n)}\int_{0}^{+\infty}
 \vol\left(\cF_v S^{(t)}\right)dt.
\]

The valuation $v$ extends to a $\bG_m$-invariant valuation $\bar{v}$
on $\bC(X\times\bA^1)=\bC(X)(t)$, such that for any 
$f=\sum_j f_j t^j$ we have
\begin{equation}\label{vbar}
\bar{v}(f)=\min_j\{v(f_j)+j\}.
\end{equation}

\begin{prop}\label{val1}
Let $X$ be a $\bQ$-Fano variety of dimension $n$.
Assume that $X$ is Ding-semistable. Let $p\in X$ be a 
closed point. Let $v\in \Val_{X,p}$ be a real valuation centered at 
$p$.
Then we have 
\begin{equation}\label{logFujval}
 A_X(v)\geq\frac{1}{r^{n+1}((-K_X)^n)}\int_0^{+\infty}
 \vol\left(\cF_v S^{(t)}\right)dt.
\end{equation}
\end{prop}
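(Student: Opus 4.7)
The plan is to feed the filtration $\cF_v$ into Fujita's sub-log-canonicity criterion Theorem \ref{fuj4.9}, and then extract \eqref{logFujval} by testing the resulting sub-lc pair against the canonical $\bG_m$-invariant lift of $v$ to $X\times\bA^1$.

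First, we may assume $A_X(v)<+\infty$, since otherwise \eqref{logFujval} is vacuous. By Lemma \ref{goodfil} the filtration $\cF_v$ is good, and since $\cF_v^x S_m=S_m$ for all $x\leq 0$ we may take $e_-=0$ in Theorem \ref{fuj4.9}, together with any integer $e_+\geq e_{\max}(S_\bullet,\cF_v)$. The theorem then yields that
\[
\bigl(X\times\bA^1,\ \cI_\bullet^{\cdot(1/r)}\cdot(t)^{\cdot d_\infty}\bigr)
\]
is sub log canonical, with $\cI_m$ and $d_\infty$ as in the theorem.

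Next, define $\bar v$ on $\bC(X)(t)=\bC(X\times\bA^1)$ by \eqref{vbar}; this is a $\bG_m$-invariant valuation with $\bar v(t)=1$. For $v$ quasi-monomial on a log smooth model $(Y,D)$ over $X$, the lift $\bar v$ is quasi-monomial on $(Y\times\bA^1,\ D\times\bA^1+Y\times\{0\})$ with the additional coordinate $t$ weighted by $1$, giving $A_{X\times\bA^1}(\bar v)=A_X(v)+1$. Taking a supremum over log smooth models via the retraction maps of \cite{jm13,bdffu}, the same identity extends to arbitrary $v\in\Val_{X,p}$ with finite log discrepancy. Testing sub log canonicity of the above pair against the valuation $\bar v$ yields
\[
A_X(v)+1\ \geq\ \tfrac{1}{r}\,\bar v(\cI_\bullet)+d_\infty\cdot \bar v(t)\ =\ \tfrac{1}{r}\,\bar v(\cI_\bullet)+d_\infty,
\]
where $\bar v(\cI_\bullet):=\lim_{m\to\infty}\bar v(\cI_m)/m$.

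Finally, I compute $\bar v(\cI_m)$ directly. By \eqref{filtideal} we have $I_{(m,k)}^{\cF_v}\subseteq\fa_k(v)$, so every element of the form $g_k t^{me_+-k}$ with $g_k\in I_{(m,k)}^{\cF_v}$ satisfies $\bar v(g_k t^{me_+-k})\geq k+(me_+-k)=me_+$, while $t^{me_+}\in\cI_m$ attains this value exactly. Hence $\bar v(\cI_m)=me_+$ for every $m$, and $\bar v(\cI_\bullet)=e_+$. Substituting this together with the explicit formula for $d_\infty$ into the previous display causes both the $e_+/r$ terms and the additive constants to cancel, leaving \eqref{logFujval} once one observes that $\vol(\cF_v S^{(t)})=0$ for all $t>e_+$, so that the integral in $d_\infty$ may be extended from $[0,e_+]$ to $[0,+\infty)$. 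The one delicate step in this argument is the log-discrepancy identity $A_{X\times\bA^1}(\bar v)=A_X(v)+1$ for a general (non-quasi-monomial) real valuation, for which I would invoke the extension of $A_X$ to $\Val_X$ via log-smooth retractions established in \cite{jm13,bdffu}.
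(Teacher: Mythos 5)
Your proposal follows essentially the same route as the paper: feed $\cF_v$ into Theorem \ref{fuj4.9} with $e_-=0$, test the resulting sub-lc pair on $X\times\bA^1$ against the lift $\bar v$ from \eqref{vbar}, compute $\bar v(\cI_m)\geq me_+$ from $v(I^{\cF_v}_{(m,x)})\geq x$, and watch the $e_+/r$ terms cancel against $d_\infty$. The computations and the final cancellation are correct. The one place where you assert more than you establish is the step ``testing sub log canonicity of the pair against the valuation $\bar v$'': sub log canonicity in the sense of \cite[Definition 2.4]{fuj15} is a condition on divisors over $X\times\bA^1$, and $\bar v$ is in general neither divisorial nor quasi-monomial, so the inequality $A_{X\times\bA^1}(\bar v)\geq \frac{1}{r}\bar v(\cI_\bullet)+d_\infty\bar v(t)$ does not follow formally from the definition; likewise the identity $A_{X\times\bA^1}(\bar v)=A_X(v)+1$ for arbitrary $v$ with $A_X(v)<+\infty$, which you flag as delicate, is not proved in the references in that generality. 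The paper sidesteps both issues at once: using that $X\times\bA^1$ is klt, it perturbs the sub-lc pair by $\epsilon$ to get a \emph{trivial multiplier ideal} statement for a single ideal $\cI_m$ with $m=m(\epsilon)$, applies \cite[Theorem 1.2]{bdffu} (which gives the valuative inequality for \emph{all} real valuations), and then evaluates on a sequence of quasi-monomial valuations $v_n\to v$ with $A_X(v_n)\to A_X(v)$, for which $A_{X\times\bA^1}(\bar v_n)=A_X(v_n)+1$ is elementary; letting $\epsilon\to 0$ recovers your inequality. If you want to keep your cleaner-looking direct argument, you would need to supply exactly this approximation-plus-multiplier-ideal machinery (or an equivalent), so in substance the two proofs coincide.
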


\begin{proof}
We may assume that $A_X(v)<+\infty$, since otherwise the inequality holds
automatically. Hence Lemma \ref{goodfil} implies that $\cF_v$ is good and saturated.

By Theorem \ref{fuj4.9}, we know that $(X\times\bA^1, \cI_\bullet^{\cdot(1/r)}\cdot (t)^{\cdot d_\infty})$ is sub log canonical.
Since $X\times\bA^1$ has klt singularities,
for any $0<\epsilon\ll 1$ there exists $m=m(\epsilon)$ such that
\[
 \cO_{X\times\bA^1}\subset\cJ(X\times\bA^1,
 \cI_m^{\cdot(1-\epsilon)/(rm)}\cdot (t)^{\cdot(1-\epsilon)d_\infty}),
\]
where the right hand side is the multiplier ideal sheaf.
By \cite[Theorem 1.2]{bdffu} we know that the following inequality holds for any real
valuation $u$ on $X\times\bA^1$:
\begin{equation}
 A_{X\times\bA^1}(u)>\frac{(1-\epsilon)}{rm}u(\cI_m)+(1-\epsilon)d_\infty u(t).
\end{equation}
Let us choose a sequence of quasi-monomial real valuations $\{v_n\}$ on $X$
such that $v_n\to v$ and $A_X(v_n)\to A_X(v)$ when $n\to\infty$. It is easy
to see that $\{\bar{v}_n\}$ is a sequence of quasi-monimial valuations on
$X\times\bA^1$ satisfying $\bar{v}_n(t)=1$ and 
\[
 A_{X\times\bA^1}(\bar{v}_n)=A_X(v_n)+1.
\]
Hence we have
\begin{align*}
A_X(v)+1  &=\lim_{n\to\infty} A_X(\bar{v}_n)\\
& \geq \frac{(1-\epsilon)}{rm}\lim_{n\to\infty}\bar{v}_n(\cI_m) + (1-\epsilon)d_\infty.
\end{align*}
From the definition of $\cF_v^x S_m$ we get:
\[
v\left(I^{\cF_v}_{(m,x)}\right)\ge x.
\]
Therefore,
\begin{align*}
 \lim_{n\to\infty}\bar{v}_n(\cI_m) & =\lim_{n\to\infty} 
 \min_{0\leq j\leq re_+}\left\{v_n\left(I^{\cF_v}_{(m,j)}\right)+ me_+ -j\right\}\\
 & =\min_{0\leq j\leq re_+}\left\{\lim_{n\to\infty}v_n\left(I^{\cF_v}_{(m,j)}\right)+ me_+ -j\right\}\\
 & =\min_{0\leq j\leq re_+}\left\{v\left(I^{\cF_v}_{(m,j)}\right)+me_+ - j\right\}\\
 & \geq me_+
\end{align*}
Hence when $\epsilon\to 0+$ we get:
\begin{equation}
 A_X(v)+1\geq \frac{e_+}{r} + d_\infty.
\end{equation}
Therefore,
\[
 A_X(v)\geq -1 + \frac{e_+}{r} + d_\infty = \frac{1}{r^{n+1}((-K_X)^n)}\int_0^{+\infty}
 \vol\left(\cF_v S^{(t)}\right)dt.
\]
Hence we get the desired inequality.
\end{proof}

The following theorem is a stronger result that implies Theorem \ref{mainthm2}.

\begin{thm}\label{main2}
Let $X$ be a Ding-semistable $\bQ$-Fano variety. 
Let $p\in X$ be a 
closed point. Let $v\in \Val_{X,p}$ be a real valuation centered at $p$.
Then we have 
\[
 ((-K_X)^n)\leq \left(1+\frac{1}{n}\right)^n\hvol(v).
\]
\end{thm}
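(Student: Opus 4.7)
If $A_X(v) = +\infty$ then $\hvol(v) = +\infty$ and the inequality is automatic, so assume $A_X(v) < +\infty$. By Lemma \ref{goodfil} the filtration $\cF_v$ is then good and saturated, and Proposition \ref{val1} applies, giving
\[
 A_X(v) \;\geq\; \frac{1}{r^{n+1}((-K_X)^n)} \int_0^{+\infty} \vol\!\left(\cF_v S^{(t)}\right) dt.
\]
The plan is to bound the integrand from below by a simple explicit function of $t$, and then optimize, in complete analogy with the integration argument used in the proof of Theorem \ref{main1}.

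The key step is the elementary estimate
\[
 \vol\!\left(\cF_v S^{(t)}\right) \;\geq\; r^n ((-K_X)^n) - t^n \vol(v) \qquad \text{for every } t \geq 0.
\]
To prove it I would use the short exact sequence $0 \to L^m \cdot \fa_{mt}(v) \to L^m \to L^m \otimes \cO_X/\fa_{mt}(v) \to 0$. Since $\cO_X/\fa_{mt}(v)$ is Artinian with support $\{p\}$ and $L$ is locally trivial there, $h^0(L^m \otimes \cO_X/\fa_{mt}(v)) = \ell(\cO_X/\fa_{mt}(v))$; hence
\[
 h^0\!\left(X, L^m \cdot \fa_{mt}(v)\right) \;\geq\; h^0(X, L^m) - \ell\!\left(\cO_X/\fa_{mt}(v)\right).
\]
Dividing by $m^n/n!$ and letting $m \to \infty$, using asymptotic Riemann--Roch for the ample divisor $L=-rK_X$ (so $(L^n) = r^n((-K_X)^n)$) and the definition of $\vol(v)$ as the limit of $\ell(\cO_X/\fa_{x}(v))/(x^n/n!)$ recorded in Section \ref{secval}, yields the claimed lower bound.

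Combining this bound with the nonnegativity of $\vol(\cF_v S^{(t)})$ and integrating over $[0, t_0]$, where $t_0 := \bigl(r^n((-K_X)^n)/\vol(v)\bigr)^{1/n}$ is the zero of the quadratic-like lower bound, produces
\[
 \int_0^{+\infty} \vol\!\left(\cF_v S^{(t)}\right) dt \;\geq\; \int_0^{t_0} \!\!\bigl(r^n ((-K_X)^n) - t^n \vol(v)\bigr)\, dt \;=\; \frac{n}{n+1}\, r^n ((-K_X)^n)\, t_0.
\]
Plugging this into Proposition \ref{val1} collapses to
\[
 A_X(v) \;\geq\; \frac{n}{n+1}\left(\frac{((-K_X)^n)}{\vol(v)}\right)^{\!1/n},
\]
and raising to the $n$-th power and multiplying by $\vol(v)$ gives $\hvol(v) \geq \bigl(\tfrac{n}{n+1}\bigr)^n ((-K_X)^n)$, which is the desired inequality.

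The nontrivial input has already been compressed into Proposition \ref{val1}, which in turn rests on Fujita's Theorem \ref{fuj4.9} applied to the filtration $\cF_v$ attached to $v$; the remaining argument above is an optimization directly parallel to the one in Theorem \ref{main1}, with the role of $\lct(X;I_Z)\cdot\vol_X(-K_X) \geq \int \vol_{\hX}(\sigma^* L - xF)\,dx$ played here by $r^{n+1}((-K_X)^n)\cdot A_X(v) \geq \int \vol(\cF_v S^{(t)})\,dt$, and the Seshadri-type lower bound for $\vol_{\hX}(\sigma^*L-xF)$ from Lemma \ref{volume}(1) replaced by the elementary cokernel bound above. The only conceptual point to be careful about is that $L$ is trivial on the Artinian thickening supporting $\cO_X/\fa_{mt}(v)$ (standard) and that the limit defining $\vol(v)$ exists as a genuine limit (as recalled in Section \ref{secval}), so no real obstacle arises beyond invoking these facts correctly.
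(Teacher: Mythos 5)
Your proposal is correct and follows essentially the same route as the paper: the same cokernel estimate $\vol(\cF_v S^{(t)})\geq (L^n)-\vol(v)t^n$ obtained from the exact sequence $0\to\cF_v^{mt}S_m\to H^0(X,L^m)\to H^0(X,L^m\otimes\cO_X/\fa_{mt}(v))$, the same integration over $[0,t_0]$, and the same invocation of Proposition \ref{val1}. The extra remark about $L$ being trivial on the Artinian thickening is a harmless elaboration of a point the paper leaves implicit.
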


\begin{proof}
We may assume $A_X(v)<+\infty$, since otherwise the inequality holds automatically.
Since $\cF_v^{mx} S_m=H^0(X,L^m\cdot\fa_{mx}(v))$, we have the exact sequence
\[
 0\to \cF_v^{mx}S_m\to H^0(X,L^{m})\to H^0(X,L^{m}\otimes(\cO_X/\fa_{mx}(v))).
\]
Hence we have
\[
 \dim \cF_v^{mx}S_m\geq h^0(X,L^{m})-\ell(\cO_X/\fa_{mx}(v)).
\]
This implies
\begin{equation}\label{liuest}
\vol(\cF_v S^{(x)})\ge (L^n)-\vol(v)x^n.
\end{equation}
So we have the estimate:
\begin{align*}
\int^{+\infty}_0\vol\left(\cF_v S^{(x)}\right)dx &\ge \int_0^{\sqrt[n]{(L^n)/\vol(v)}}
\left((L^n)-\vol(v)x^n\right)dx\\
&=\frac{n}{n+1}(L^n)\cdot \sqrt[n]{(L^n)/\vol(v)}.
\end{align*}
Applying \eqref{logFujval} we get the inequality:
\begin{eqnarray*}
A_X(v)&\ge&\frac{1}{r(L^{n})}\int^{+\infty}_0\vol\left(\cF_v S^{(t)}\right)dt\\
&\ge& \frac{1}{r(L^n)}\frac{n}{n+1} (L^{n})\sqrt[n]{(L^n)/\vol(v)}
=\frac{n}{n+1}\sqrt[n]{((-K_X)^n)/\vol(v)}.
\end{eqnarray*}
This is exactly equivalent to
\[
 ((-K_X)^n)\leq \left(1+\frac{1}{n}\right)^n A_X(v)^n\vol(v)=\left(1+\frac{1}{n}\right)^n\hvol(v).
\]
\end{proof}

\subsection{Applications}\label{app}

The following result is an easy consequence of Theorem \ref{mainthm}.
\begin{thm}[\cite{dfem04}]\label{dfem}
Let $X$ be a variety of dimension $n$.
Let $Z$ be a closed subscheme of $X$ supported at a single smooth closed
point. Then
\[
\left(\frac{\lct(X;I_Z)}{n}\right)^n\mult_Z X\geq 1.
\]
\end{thm}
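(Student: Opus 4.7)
The plan is to reduce the statement to the case $X = \bP^n$ and apply Theorem \ref{mainthm} directly, using the fact that $\bP^n$ is K\"ahler-Einstein (via the Fubini-Study metric) and that its anti-canonical volume is $(n+1)^n$.

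First I would exploit that both $\lct(X; I_Z)$ and $\mult_Z X$ depend only on the completed local ring $\widehat{\cO_{X,p}}$ at the smooth point $p$. Since $p$ is smooth, $\widehat{\cO_{X,p}} \cong \bC[[x_1,\ldots,x_n]] \cong \widehat{\cO_{\bP^n, q}}$ for any smooth point $q \in \bP^n$. Consequently, after transferring $I_Z$ to an $\fm_q$-primary ideal sheaf $\fa$ on $\bP^n$ supported at $q$ (extending by $\cO_{\bP^n}$ away from $q$), we obtain
\[
\lct(X; I_Z) = \lct(\bP^n; \fa), \qquad \mult_Z X = \mult_{V(\fa)} \bP^n.
\]
So it suffices to prove $(\lct(\bP^n;\fa)/n)^n\cdot\mult_{V(\fa)}\bP^n\ge 1$ for any such ideal $\fa$.

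Next I would apply Theorem \ref{mainthm} to the K\"ahler-Einstein $\bQ$-Fano variety $\bP^n$ with the closed subscheme $V(\fa)$. Since $((-K_{\bP^n})^n) = (n+1)^n$, the inequality \eqref{ineq1} reads
\[
(n+1)^n \le \left(1+\frac{1}{n}\right)^n \lct(\bP^n;\fa)^n \cdot \mult_{V(\fa)}\bP^n = \frac{(n+1)^n}{n^n}\lct(\bP^n;\fa)^n \cdot \mult_{V(\fa)}\bP^n,
\]
and dividing through by $(n+1)^n$ and rearranging yields exactly $(\lct/n)^n\cdot\mult\ge 1$.

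There is essentially no hard step here — the only mildly subtle point is the local reduction, which should be justified by noting that $\lct$ of an $\fm_p$-primary ideal and the Hilbert-Samuel multiplicity are both determined by the formal completion at $p$, so they are preserved when $(X,p)$ is replaced by any other smooth $n$-dimensional germ carrying ``the same'' ideal. Once that reduction is in place, the numerical identity $(1+1/n)^n = (n+1)^n/n^n$ makes the constants match perfectly, which is why $\bP^n$ is the sharp model and Theorem \ref{mainthm} specializes to the de Fernex--Ein--Musta\c t\u a inequality with no slack.
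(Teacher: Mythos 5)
Your proposal is correct and is essentially identical to the paper's proof: the paper likewise reduces to $X=\bP^n$ by noting that both $\mult$ and $\lct$ are preserved under completion (citing \cite[Proposition 2.11]{dfem11} for the latter) and then applies Theorem \ref{mainthm} to $\bP^n$, where $((-K_{\bP^n})^n)=(n+1)^n$ makes the constants cancel exactly.
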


\begin{proof}
 It is clear that the multiplicity is preserved under completion. Next, \cite[Proposition 2.11]{dfem11} implies that $\lct$ is also preserved under completion.
 Hence it suffices to prove the theorem for $X= \bP^n$. This follows 
 directly by applying Theorem \ref{mainthm} to the case $X=\bP^n$.
\end{proof}

Now we will illustrate some volume bounds for singular $X$.

\begin{thm}\label{quot1}
Let $X$ be a Ding-semistable $\bQ$-Fano variety. Let $p\in X$ be a
closed point. Suppose $(X,p)$ is a quotient singularity with local
analytic model $\bC^2/G$ where $G\subset GL(n,\bC)$ acts freely
in codimension $1$. Then
\[
 ((-K_X)^n)\leq \frac{(n+1)^n}{|G|}.
\]
\end{thm}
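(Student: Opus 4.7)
The strategy is to produce a valuation $v\in\Val_{X,p}$ satisfying $\hvol(v)=n^n/|G|$ and then invoke Theorem \ref{main2}, which gives
\[
((-K_X)^n)\leq\left(1+\tfrac{1}{n}\right)^n\hvol(v)=\frac{(n+1)^n}{|G|}.
\]
The candidate comes from the local analytic model: let $\pi\colon\bC^n\to\bC^n/G$ be the quotient map, identified with the analytic germ of $(X,p)$ and sending the origin to $p$. Since $G\subset GL(n,\bC)$ acts linearly, the vanishing order $\ord_0$ on $\bC^n$ is $G$-invariant and descends to a real valuation $v$ on $\bC(X)$ centered at $p$, characterized by $v(f)=\ord_0(\pi^*f)$.

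For the log discrepancy, the hypothesis that $G$ acts freely in codimension one is exactly the condition that $\pi$ is quasi-\'etale, so the ramification formula gives $K_{\bC^n}=\pi^*K_X$ in the local analytic model, hence $A_X(v)=A_{\bC^n}(\ord_0)=n$. For the volume, I would use the identification $\cO_{X,p}\cong\cO_{\bC^n,0}^G$ together with the $G$-stability of $\fm_0^m$ and exactness of the functor of $G$-invariants (valid since $G$ is finite in characteristic zero) to deduce $\fa_m(v)=(\fm_0^m)^G$ and
\[
\ell\bigl(\cO_{X,p}/\fa_m(v)\bigr)=\dim_{\bC}\bigl(\cO_{\bC^n,0}/\fm_0^m\bigr)^G=\frac{1}{|G|}\sum_{g\in G}\tr\bigl(g;\cO_{\bC^n,0}/\fm_0^m\bigr).
\]
The identity contributes $\binom{m+n-1}{n}\sim m^n/n!$. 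For each non-identity $g\in G$ the eigenvalue $1$ of $g$ has multiplicity at most $n-1$, so the Molien series $\sum_{k\geq 0}\tr(g;\Sym^k\bC^n)t^k=\det(I-gt)^{-1}$ has a pole of order at most $n-1$ at $t=1$, and the partial sums up to level $m$ grow like $O(m^{n-1})$. Averaging yields $\vol(v)=1/|G|$, and combined with $A_X(v)=n$ this gives $\hvol(v)=n^n/|G|$, completing the argument.

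The main technical step is the Molien asymptotic computation for $\vol(v)$ described above. The ``freely in codimension one'' hypothesis is used decisively in the log-discrepancy step: without it, $A_X(v)$ would strictly exceed $n$ and Theorem \ref{main2} would only produce a strictly weaker inequality. The remaining ingredients (descent of $\ord_0$ to $\bC(X)$ and the ramification formula for quasi-\'etale covers) are formal.
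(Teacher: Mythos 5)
Your proposal is correct and follows essentially the same route as the paper: both take the valuation $v=\pi_*(\ord_0)$ induced by the quotient map, establish $A_X(v)=n$ and $\vol(v)=1/|G|$ (the latter via exactly the Molien-series asymptotics of the paper's Lemma \ref{quotmult}), and then apply Theorem \ref{main2}. The only points you treat more casually are (i) the descent from the analytic model to an honest valuation on $\bC(X)$ with the stated log discrepancy, which the paper handles by extending the valuation uniquely to $\Spec\widehat{\cO_{X,p}}$ and invoking singular versions of \cite[Cor.~5.11, Prop.~5.13]{jm13}, and (ii) the identity $A_X(v)=A_{\bC^n}(\ord_0)$, which does not follow from crepancy of $\pi$ alone but from the finite-cover formula for log discrepancies --- the paper verifies it by the explicit computation $v=d\,\ord_F$ with $A(\ord_F)=n/d$, where $d=|G\cap\bG_m|$ and $F$ is the exceptional divisor of the quotient blow-up.
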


\begin{proof}
 Let $(Y,o):=(\bC^n/G, 0)$ be the local analytic model of the quotient 
 singularity $(X,p)$. 
 Let $H:= G\cap \mathbb{G}_m$ with $d:=|H|$. 
 Define the real valuation $u_0$ on $Y$ to be the pushforward
 of the valuation $\ord_0$ on $\bC^n$ under the quotient map 
 $\bC^n\to Y$. 
 
 We first show that $\hvol(u_0)=\frac{n^n}{|G|}$.
 Let $\widehat{\bC^n}$ be the blow up of $\bC^n$ at the origin $0$ 
 with exceptional divisor $E$. Denote by $\pi:\bC^n\to Y$
 the quotient map. Then $\pi$ lifts to $\widehat{\bC^n}$ as 
 $\hat{\pi}:\widehat{\bC^n}\to \hat{Y}$, where
 $\hat{Y}:=\widehat{\bC^n}/G$. We have the following commutative diagram:
 \[
 \begin{tikzcd}
  \widehat{\bC^n}\arrow{r}{\hat{\pi}}\arrow{d}[swap]{g} & 
  \hat{Y}
  \arrow{d}{h}\\
  \bC^n\arrow{r}{\pi} & Y
 \end{tikzcd}
 \]
 Let $F\subset\hat{Y}$ be the exceptional divisor of $h$. For a 
 general point on $F$, its stabilizer under the $G$-action is 
 exactly $H$. So $\hat{\pi}^*F=d E$, which implies that $u_0=
 \pi_*(\ord_E)=d~\ord_F$. It is clear that
 \[
  K_{\widehat{\bC^n}}=\hat{\pi}^*\left(K_{\hat{Y}}+\left(1-\frac{1}{d}\right) F \right).
 \]
 Then combining these equalities with $K_{\widehat{\bC^n}}=g^* 
 K_{\bC^n}+(n-1)E=g^*(\pi^* K_Y)+(n-1)E$, we get
 \[
  K_{\hat{Y}}=h^*K_{Y} +\left(\frac{n}{d}-1\right) F.
 \]
 Hence $A_X(\ord_F)=\frac{n}{d}$ and $A_X(u_0)=d\cdot A_X(\ord_F)=n$. 
 Lemma \ref{quotmult} implies that $\vol(u_0)=\frac{1}{|G|}$.
 Therefore, $\hvol(u_0)=\frac{n^n}{|G|}$.
 
 Since $A_Y(u_0)<+\infty$, a singular version of \cite[Corollary 5.11]{jm12}
 for klt singularities (such a statement is true thanks to the Izumi inequality
 for klt singularities \cite[Proposition 3.1]{li15a})
 implies that $u_0$ has a unique extension, say $\hat{u}_0$,
 as a valuation on $\Spec\widehat{\cO_{Y,o}}$ that centered at the closed
 point. By assumption we have an isomorphism $\phi:\widehat{\cO_{X,p}}\to 
 \widehat{\cO_{Y,o}}$. Let $v_0:= (\phi^*\hat{u}_0)|_{\cO_{X,p}}$, then
 $v_0\in\Val_{X,p}$. By a singular version of \cite[Proposition 5.13]{jm12}, we know that
 $A_X(v_0)=A_Y(u_0)$. On the other hand, since the colength of any
 $\fm$-primary ideal does not change under completion, we have 
 $\vol(v_0)=\vol(\hat{u}_0)=\vol(u_0)$. As a result, we have
 \[
  \hvol(v_0)= \hvol(u_0) = \frac{n^n}{|G|}.
 \]
 Hence the theorem follows as a direct application of Theorem \ref{mainthm2}
 to $v=v_0$. 
\end{proof}

\begin{lem}\label{quotmult}
Let $G\subset GL(n,\bC)$ be a finite group acting on $\bC^n=\Spec~\bC[x_1,\cdots,x_n]$. 
Then we have
\[
\lim_{m\to\infty}\frac{\dim_{\bC}\bC[x_1,\cdots,x_n]^G_{< m}}{m^n/n!}=\frac{1}{|G|}.
\]
\end{lem}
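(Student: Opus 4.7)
The plan is to compute the Hilbert function of the invariant ring via character averaging and extract the leading asymptotics from the identity element's contribution.

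First, since $G$ acts linearly on $\bC^n$, it preserves the degree grading on $R:=\bC[x_1,\dots,x_n]$. Hence $R^G_{<m}=(R_{<m})^G$, and the Reynolds operator $\frac{1}{|G|}\sum_{g\in G}g$ together with Maschke's theorem gives the character-theoretic identity
\[
\dim_{\bC} R^G_{<m}=\frac{1}{|G|}\sum_{g\in G}\mathrm{tr}\bigl(g\mid R_{<m}\bigr).
\]
The contribution from $g=\mathrm{id}$ is $\dim R_{<m}=\binom{m+n-1}{n}$, which is asymptotic to $m^n/n!$. So after dividing by $m^n/n!$, the identity term alone already produces the expected limit $1/|G|$, and it suffices to show that every non-identity $g\in G$ contributes $o(m^n)$.

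For this, fix $g\neq\mathrm{id}$. Since $G$ is finite, $g$ is diagonalizable with eigenvalues $\lambda_1,\dots,\lambda_n$ that are roots of unity, and not all of them equal $1$ (otherwise $g$ would be unipotent of finite order, hence trivial). Let $k$ denote the multiplicity of $1$ among the $\lambda_i$, so $0\leq k\leq n-1$. Working in a basis that diagonalizes $g$, one computes the generating function
\[
\sum_{d=0}^{\infty}\mathrm{tr}\bigl(g\mid R_d\bigr)\,t^d=\prod_{i=1}^{n}\frac{1}{1-\lambda_i t}.
\]
This rational function has a pole of order exactly $k\leq n-1$ at $t=1$ and poles of order at most $n-k$ at various other points on the unit circle. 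A standard partial-fractions argument then yields $\mathrm{tr}(g\mid R_d)=P(d)+\sum_{\lambda\neq 1}\lambda^{-d}P_\lambda(d)$, where $P$ has degree $k-1$ and the $P_\lambda$ are polynomials; summing from $d=0$ to $m-1$, the polynomial piece contributes $O(m^{k})=O(m^{n-1})$, while the oscillatory pieces contribute $O(m^{n-1})$ by Abel summation using $|\lambda|=1$, $\lambda\neq 1$. Either way,
\[
\sum_{d=0}^{m-1}\mathrm{tr}\bigl(g\mid R_d\bigr)=O(m^{n-1})=o(m^n).
\]

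Combining these two steps, dividing the character-average formula by $m^n/n!$ and letting $m\to\infty$, only the $g=\mathrm{id}$ term survives and yields the limit $1/|G|$. The main (and only) obstacle is the non-identity estimate, which as outlined is a routine pole-order/partial-fractions computation on the Molien-type generating function.
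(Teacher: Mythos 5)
Your proposal is correct and follows essentially the same route as the paper: both average characters over $G$ (Molien's theorem), isolate the identity term $\binom{m+n-1}{n}\sim m^n/n!$, and dispose of each $g\neq\mathrm{id}$ by a partial-fractions/pole-order argument on $\frac{1}{(1-t)\prod_i(1-\lambda_it)}$, whose pole at $t=1$ has order at most $n$ since not all eigenvalues of $g$ equal $1$. The only cosmetic difference is that you estimate $\mathrm{tr}(g\mid R_d)$ first and then sum over $d$, whereas the paper works directly with the generating function of the cumulative sums.
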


\begin{proof}
Denote $W:=\bC[x_1,\cdots,x_n]=\bC x_1\oplus\cdots\oplus\bC x_n$.
Then we have that
\[
 \bC[x_1,\cdots,x_n]\cong\bigoplus_{m\geq 0}\Sym^m W.
\]
Denote by $\rho_m:G\to GL(\Sym^m W)$ the induced representation of 
$G$ on $\Sym^m W$. Since $G$ is finite, $\rho_m(g)$ is diagonalizable
for any $m\geq 0$ and $g\in G$. Denote the $n$ eigenvalues of $\rho_1(g)$
by $\lambda_{g,1},\cdots,\lambda_{g,n}$. Then the eigenvalues of 
$\rho_m(g)$ are exactly all monomials in $\lambda_{g,i}$ of degree
$m$. Let $c_m:=\dim_{\bC}(\Sym^m W)^G$ and 
$d_m:=\dim_{\bC}\bC[x_1,\cdots,x_n]_{\leq m}^G$. From representation
theory we know that
\[
 c_m=\frac{1}{|G|}\sum_{g\in G}\tr(\rho_m(g)).
\]
Hence
\[
 c(t):=\sum_{m=0}^\infty c_m t^m =\frac{1}{|G|}\sum_{g\in G}\frac{1}
 {(1-\lambda_{g,1}t)\cdots (1-\lambda_{g,n}t)}.
\]
This is also known as Molien's theorem \cite{mol1897}.
Since $d_m=\sum_{i=0}^m c_i$, we have
\begin{align*}
 d(t) & :=\sum_{m=0}^\infty d_m t^m = \frac{c(t)}{1-t}\\
  & =\frac{1}{|G|}\sum_{g\in G}\frac{1}
 {(1-t)(1-\lambda_{g,1}t)\cdots (1-\lambda_{g,n}t)}.
\end{align*}
Here all eigenvalues $\lambda_{g,i}$ are roots of unity. Using
partial fraction decomposition, 
for any $g\neq id$ we have  
\[
 \frac{1}
 {(1-t)(1-\lambda_{g,1}t)\cdots (1-\lambda_{g,n}t)}=\sum_{m=0}^\infty
 o(m^n)t^m.
\]
Hence 
\begin{align*}
 d(t)& =\frac{1}{|G|}\cdot\frac{1}{(1-t)^{n+1}} + \sum_{m=0}^\infty
 o(m^n)t^m\\
 &=\sum_{m=0}^\infty\left(\frac{1}{|G|}\binom{n+m}{n}+o(m^n)\right)t^m.
\end{align*}
As a result,
\[
 \lim_{m\to\infty}\frac{d_{m-1}}{m^n/n!}=\lim_{m\to\infty}
 \frac{\frac{1}{|G|}\binom{n+m-1}{n}}{m^n/n!}=\frac{1}{|G|}.
\]
\end{proof}

\begin{thm}\label{nonterm}
Let $X$ be a Ding-semistable $\bQ$-Fano variety. Let $p\in X$ be an isolated singularity.
If $X$ is not terminal at $p$, then 
\[
((-K_X)^n)\leq \left(1+\frac{1}{n}\right)^n \mult_p X< e~\mult_p X.
\]
\end{thm}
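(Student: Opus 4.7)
The plan is to reduce to Theorem~\ref{main2} by exhibiting a real valuation $v \in \Val_{X,p}$ with $\hvol(v) \leq \mult_p X$. Since $X$ is not terminal at $p$, by definition there exists a prime divisor $E$ over $X$ with $A_X(E) \leq 1$. Because $p$ is an isolated singularity, $X$ is smooth (hence terminal) away from $p$, so the center of any such non-terminal $E$ must be the point $p$ itself. In particular, $\ord_E \in \Val_{X,p}$.

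Next I would bound $\vol(\ord_E)$ in terms of $\mult_p X$. The fact that $\ord_E$ is centered at $p$ means that $\ord_E(f) \geq 1$ for every $f \in \fm_p$, which gives the inclusions $\fm_p^m \subseteq \fa_m(\ord_E)$ for every $m \geq 1$. Since the Hilbert--Samuel multiplicity is monotone non-increasing under inclusion of $\fm_p$-primary ideals, we obtain
\[
\mult(\fa_m(\ord_E)) \leq \mult(\fm_p^m) = m^n \cdot \mult_p X,
\]
and dividing by $m^n$ and passing to the limit yields $\vol(\ord_E) \leq \mult_p X$. Combined with $A_X(E) \leq 1$, this gives the key estimate
\[
\hvol(\ord_E) = A_X(E)^n \cdot \vol(\ord_E) \leq \mult_p X.
\]

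Now I would invoke Theorem~\ref{main2} with $v = \ord_E$ to obtain
\[
((-K_X)^n) \leq \left(1 + \frac{1}{n}\right)^n \hvol(\ord_E) \leq \left(1 + \frac{1}{n}\right)^n \mult_p X,
\]
and the strict inequality $(1+\frac{1}{n})^n < e$ is elementary. The only point that requires a bit of care is ensuring the existence of a divisor $E$ centered precisely at $p$ with $A_X(E) \leq 1$; this is automatic from the definition of terminal singularities together with the fact that $p$ is the unique non-smooth point of $X$. All remaining ingredients are direct applications of the definitions and Theorem~\ref{main2}, so there is no substantial obstacle.
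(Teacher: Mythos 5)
Your argument is correct, but it takes the valuation-theoretic route where the paper takes the ideal-theoretic one. The paper applies Theorem \ref{main1} to $Z=\{p\}$: it picks a log resolution $\pi:Y\to X$ that is an isomorphism away from $p$ with $\pi^{-1}\fm_p\cdot\cO_Y$ invertible, notes that every exceptional divisor then has $\ord_{E_i}(\fm_p)=b_i\geq 1$, and uses non-terminality to find one with discrepancy $a_{i_0}\leq 0$, whence $\lct(X;\fm_p)\leq\frac{1+a_{i_0}}{b_{i_0}}\leq 1$ and the bound follows. You instead take the non-terminal divisor $E$ itself, show its center is $\{p\}$, convert $\ord_E(\fm_p)\geq 1$ into $\fm_p^m\subseteq\fa_m(\ord_E)$ and hence $\vol(\ord_E)\leq\mult_p X$, and feed $\hvol(\ord_E)\leq\mult_p X$ into Theorem \ref{main2}. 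The geometric input is the same in both cases --- a divisor over $p$ with $A_X(E)\leq 1$ and $\ord_E(\fm_p)\geq 1$ --- and your extra step is precisely the specialization of Lemma \ref{compare1} that makes Theorems \ref{main1} and \ref{main2} interchangeable here. What the paper's version buys is that it never needs to locate the center of the non-terminal divisor (only the $b_i\geq 1$ on its chosen resolution matter), whereas your version must justify that the center of $E$ is exactly $\{p\}$. Your justification of that point is essentially right but stated loosely: ``$X$ is smooth away from $p$'' holds only in a neighborhood of $p$, and the clean argument is that an exceptional divisor with $a(E)\leq 0$ must have center contained in $\mathrm{Sing}(X)$ (over a smooth generic point of the center the discrepancy would be at least $\mathrm{codim}-1\geq 1$), so an irreducible center containing the isolated singular point $p$ is forced to be $\{p\}$. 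With that wording tightened, your proof is complete.
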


\begin{proof}
 Denote by $\fm_p$ the maximal ideal at $p$.
 We first show that $\lct(X;\fm_p)\leq 1$. Since $p$ is an isolated singularity,
 we may take a log resolution of $(X,\fm_p)$, namely $\pi:Y\to X$ such
 that $\pi$ is an isomorphism away from $p$ and $\pi^{-1}\fm_p\cdot\cO_Y$
 is an invertible ideal sheaf on $Y$. Let $E_i$ be the exceptional divisors
 of $\pi$. We define the numbers $a_i$ and $b_i$ by
 \[
 K_Y=\pi^*K_X+\sum_i a_i E_i\quad\textrm{and}\quad \pi^{-1}\fm_p\cdot\cO_Y=
 \cO_Y(-\sum_i b_i E_i).
 \]
 It is clear that $\lct(X;\fm_p)=\min_i\frac{1+a_i}{b_i}$. Since $\pi$ is an isomorphism away from $p$, we have $b_i\geq 1$ for
 any $i$. Since $X$ is not terminal at $p$, there exists an index $i_0$ such that 
 $a_{i_0}\leq 0$.
 Hence 
 \[
  \lct(X; \fm_p)\leq \frac{1+a_{i_0}}{b_{i_0}}\leq 1.
 \]
 So we finish the proof by applying Theorem \ref{mainthm} to $Z=p$.
\end{proof}

\section{Comparing invariants for ideals and valuations}\label{seccompare}

In this section, we will always assume that $(X,p)$ is a normal
$\bQ$-Gorenstein klt singularity of dimension $n$ with $p\in X$ 
a closed point.

\subsection{The infimums of invariants}
\begin{lem}\label{compare1}
Let $\fa$ be an ideal sheaf on $X$ supported at $p$. 
Let $v_0\in \Val_{X}$ be a divisorial valuation that computes 
$\lct(\fa)$. Then $v_0$ is centered at $p$, and we have
\[
\lct(\fa)^n\cdot\mult(\fa)\geq \hvol(v_0).
\]
\end{lem}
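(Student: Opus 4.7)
The plan is to unpack the hypothesis that $v_0$ computes $\lct(\fa)$ into an equality $A_X(v_0) = \lct(\fa)\cdot v_0(\fa)$, and then compare the Hilbert-Samuel multiplicity of $\fa$ with the volume of $v_0$ via the containments $\fa^k\subseteq \fa_{k v_0(\fa)}(v_0)$.

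First I would dispose of the centering claim. Since $\fa$ is supported at $p$, for any point $q\neq p$ the ideal $\fa$ is trivial in a neighborhood of $q$, so any valuation $v$ centered at $q$ satisfies $v(\fa)=0$ and hence $A_X(v)/v(\fa)=+\infty$. Because $\lct(\fa)<+\infty$ (as $\fa$ is a proper ideal on a klt variety), the divisorial valuation $v_0$ achieving this finite value must have $v_0(\fa)>0$, which forces its center to lie in $V(\fa)=\{p\}$; hence $v_0\in\Val_{X,p}$. With this, ``$v_0$ computes $\lct(\fa)$'' means
\[
\lct(\fa)=\frac{A_X(v_0)}{v_0(\fa)}.
\]

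Next I would compare $\mult(\fa)$ with $\vol(v_0)$. Set $c:=v_0(\fa)>0$. For any $f_1,\dots,f_k\in\fa$, additivity of $v_0$ gives $v_0(f_1\cdots f_k)\geq kc$, so $\fa^k\subseteq\fa_{kc}(v_0)$. Since both are $\fm_p$-primary ideals (for $\fa_{kc}(v_0)$ this uses the Izumi-type bound $v_0\leq C\cdot\ord_p$ on $\cO_{X,p}$, which holds because $A_X(v_0)<+\infty$), the Hilbert-Samuel multiplicity reverses inclusions, and $\mult(\fa^k)=k^n\mult(\fa)$, so
\[
k^n\mult(\fa)=\mult(\fa^k)\geq \mult\bigl(\fa_{kc}(v_0)\bigr).
\]
Dividing by $k^n$ and letting $k\to\infty$ yields
\[
\mult(\fa)\geq c^n\lim_{k\to\infty}\frac{\mult(\fa_{kc}(v_0))}{(kc)^n}=c^n\cdot\vol(v_0),
\]
using the identification $\vol(v_0)=\lim_{m\to\infty}\mult(\fa_m(v_0))/m^n$ recalled in Section \ref{secval}.

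Finally I would combine the two displays:
\[
\lct(\fa)^n\cdot\mult(\fa)=\frac{A_X(v_0)^n}{c^n}\cdot\mult(\fa)\geq \frac{A_X(v_0)^n}{c^n}\cdot c^n\vol(v_0)=A_X(v_0)^n\vol(v_0)=\hvol(v_0),
\]
which is the desired inequality. The one subtlety to be careful about is the passage to the limit of multiplicities of valuation ideals; the existence of this limit and its identification with $\vol(v_0)$ is exactly the content of the results of Ein-Lazarsfeld-Smith, Musta\c t\u a, and Cutkosky cited above, so no new work is needed there. Everything else is an elementary manipulation.
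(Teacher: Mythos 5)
Your proposal is correct and follows essentially the same route as the paper: both use $\lct(\fa)=A_X(v_0)/v_0(\fa)$, the inclusion $\fa^m\subseteq\fa_{m v_0(\fa)}(v_0)$, and a passage to the limit to get $\mult(\fa)\geq v_0(\fa)^n\vol(v_0)$. The only cosmetic difference is that the paper compares colengths $\ell(\cO_X/\fa^m)\geq\ell(\cO_X/\fa_{m\alpha}(v_0))$ directly and uses the colength definition of $\vol(v_0)$, whereas you compare Hilbert--Samuel multiplicities and invoke the identification $\vol(v_0)=\mult(\fa_\bullet(v_0))$; both are recalled in the paper and equally valid.
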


\begin{proof}
Since $v_0$ computes $\lct(\fa)$, we have that $\lct(\fa)=A_X(v_0)/v_0(\fa)$. Denote $\alpha:=v_0(\fa)$. Then $\fa^m\subset \fa_{m\alpha}(v_0)$ since $v_0(\fa^m)=m\alpha$. Therefore,
\begin{align*}
 \mult(\fa) & =\lim_{m\to\infty}\frac{\ell(\cO_X/\fa^m)}{m^n/n!}\\
 & \geq \lim_{m\to\infty}\frac{\ell(\cO_X/\fa_{m\alpha}(v_0))}{m^n/n!}\\
 & = \alpha~\vol(v_0).    
\end{align*}
Hence we prove the lemma.
\end{proof}

Following \cite{els03}, for a graded sequence of ideals $\fa_\bullet$
supported at $p$, the \textit{volume} of 
$\fa_\bullet$ is defined as
\[
 \vol(\fa_\bullet):=\limsup_{m\to\infty}\frac{\ell(R/\fa_m)}{m^n/n!},
\]
while the \textit{multiplicity} of $\fa_\bullet$ is defined as
\[
 \mult(\fa_\bullet):=\lim_{m\to\infty}\frac{\mult(\fa_m)}{m^n}.
\]
By \cite{lm09, cut13}, we know that $\mult(\fa_\bullet)=
\vol(\fa_\bullet)$ when $X$ is normal. 

\begin{thm}\label{compare2}
We have 
\begin{equation}\label{infs}
\inf_{\fa}\big(\lct(\fa)^n\cdot\mult(\fa)\big)=
\inf_{\fa_\bullet}\big(\lct(\fa_\bullet)^n\cdot\vol(\fa_\bullet)\big)
=\inf_{v}\hvol(v),
\end{equation}
where the infimums are taken over all ideals $\fa$ supported at $p$, 
all graded sequences of ideals $\fa_\bullet$ supported at $p$ and 
all real valuations $v\in\Val_{X,p}$, respectively. We also set 
$\lct(\fa_\bullet)^n\cdot\vol(\fa_\bullet)=+\infty$ if $\lct(\fa_\bullet)=+\infty$. 
\end{thm}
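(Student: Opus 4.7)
The plan is to verify the cyclic chain of inequalities
\[
\inf_v \hvol(v) \;\leq\; \inf_\fa \lct(\fa)^n\mult(\fa) \;\leq\; \inf_{\fa_\bullet} \lct(\fa_\bullet)^n\vol(\fa_\bullet) \;\leq\; \inf_v \hvol(v),
\]
where $\fa$ ranges over ideals supported at $p$, $\fa_\bullet$ over graded sequences of such ideals, and $v$ over $\Val_{X,p}$; this forces all three infimums to coincide. The leftmost inequality is immediate from Lemma~\ref{compare1}: for each $\fa$, klt-ness of $(X,p)$ guarantees the existence of a divisorial valuation $v_0\in\Val_{X,p}$ computing $\lct(\fa)$, and the lemma gives $\lct(\fa)^n\mult(\fa)\geq \hvol(v_0)\geq\inf_v\hvol(v)$.

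For the middle inequality, given a graded sequence $\fa_\bullet$ supported at $p$ (the case $\lct(\fa_\bullet)=+\infty$ is vacuous), each $\fa_m$ is itself $\fm_p$-primary and so competes in $\inf_\fa$. Writing
\[
\lct(\fa_m)^n\mult(\fa_m) = \bigl(m\,\lct(\fa_m)\bigr)^n\cdot\frac{\mult(\fa_m)}{m^n},
\]
I invoke two standard asymptotic theorems for graded sequences on normal klt singularities: $m\,\lct(\fa_m)\to \lct(\fa_\bullet)$ (Jonsson--Musta\c{t}\u{a} \cite{jm13}) and $\mult(\fa_m)/m^n\to \mult(\fa_\bullet)=\vol(\fa_\bullet)$ (ELS--Cutkosky, using normality of $X$ \cite{els03,cut13}). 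Passing to the limit yields
\[
\inf_\fa\lct(\fa)^n\mult(\fa) \;\leq\; \lim_{m\to\infty}\lct(\fa_m)^n\mult(\fa_m) = \lct(\fa_\bullet)^n\vol(\fa_\bullet),
\]
and infimizing over $\fa_\bullet$ gives the desired inequality.

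For the third inequality, fix $v\in\Val_{X,p}$ with $A_X(v)<+\infty$ (else $\hvol(v)=+\infty$ and there is nothing to prove). Consider the graded sequence of valuation ideals $\fa_\bullet(v)=\{\fa_m(v)\}$: Izumi's inequality on klt singularities (\cite{li15a}) forces each $\fa_m(v)$ to contain a power of $\fm_p$, so $\fa_\bullet(v)$ is supported at $p$. By the definition of $\vol(v)$ recalled in Section~\ref{secval}, $\vol(\fa_\bullet(v))=\vol(v)$. Since $v(\fa_m(v))\geq m$ and $\lct(\fa)\leq A_X(v)/v(\fa)$ for any valuation $v$ with $v(\fa)>0$ (an easy consequence of the valuative formulation of $\lct$ via quasi-monomial approximation \cite{jm13}), we obtain $m\,\lct(\fa_m(v))\leq A_X(v)$, hence $\lct(\fa_\bullet(v))\leq A_X(v)$. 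Therefore
\[
\lct(\fa_\bullet(v))^n\vol(\fa_\bullet(v))\;\leq\; A_X(v)^n\vol(v)=\hvol(v),
\]
and infimizing over $v$ closes the chain.

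Overall the proof is a clean bootstrap from Lemma~\ref{compare1} together with two standard graded-sequence limit theorems. There is no genuine obstacle; the only technical point is verifying that the asymptotic results of Jonsson--Musta\c{t}\u{a} and ELS--Cutkosky apply in the normal klt setting, which is by now routine. The conceptual content is really concentrated in Lemma~\ref{compare1}, which supplies the one-way comparison between a single ideal and a divisorial valuation; the rest is just threading the natural approximations (constant powers $\fa^m$ on one side, valuation ideals $\fa_\bullet(v)$ on the other) through that lemma.
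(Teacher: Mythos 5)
Your proposal is correct and follows essentially the same route as the paper: the same cyclic chain of three inequalities, established respectively by Lemma~\ref{compare1}, by the asymptotic results $m\,\lct(\fa_m)\to\lct(\fa_\bullet)$ and $\mult(\fa_\bullet)=\vol(\fa_\bullet)$ of Jonsson--Musta\c{t}\u{a} and Lazarsfeld--Musta\c{t}\u{a}/Cutkosky, and by applying $\lct(\fa_\bullet(v))\leq A_X(v)$ together with $\vol(\fa_\bullet(v))=\vol(v)$ to the valuation ideals. No substantive differences from the paper's argument.
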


\begin{proof}
Let $\fa_\bullet$ be a graded sequence of ideal sheaves supported at
$p$. By \cite{jm12, bdffu}, we have
\[
\lim_{m\to\infty} m~\lct(\fa_m)=\lct(\fa_\bullet).
\]
Since $\mult(\fa_\bullet)=\vol(\fa_\bullet)$, if $\lct(\fa_\bullet)<+\infty$ then
\[
 \lim_{m\to\infty} \lct(\fa_m)^n\cdot\mult(\fa_m)=\lct(\fa_\bullet)^n
 \cdot\vol(\fa_\bullet).
\]
Thus we have
\begin{equation}\label{lct1}
 \inf_{\fa}\big(\lct(\fa)^n\cdot\mult(\fa)\big)\leq 
\inf_{\fa_\bullet}\big(\lct(\fa_\bullet)^n\cdot\vol(\fa_\bullet)\big).
\end{equation}

Next, Lemma \ref{compare1} implies that
\begin{equation}\label{lct2}
\inf_{\fa}\big(\lct(\fa)^n\cdot\mult(\fa)\big)\geq \inf_{v}\hvol(v). 
\end{equation}

Finally, for any real valuation $v\in\Val_{X,p}$ with $A_X(v)<+\infty$, by \cite[Theorem 1.2]{bdffu} we have
\[
\lct(\fa_\bullet(v))\leq\frac{A_X(v)}{v(\fa_\bullet(v))}=A_X(v)<+\infty.
\]
By definition we have $\vol(\fa_\bullet(v))=\vol(v)$. Therefore, 
\begin{equation}\label{lct2.5}
\lct(\fa_\bullet(v))^n\cdot\vol(\fa_\bullet(v))
\leq A_X(v)^n\cdot\vol(v)=\hvol(v).
\end{equation}
Hence we have
\begin{equation}\label{lct3}
\inf_{\fa_\bullet}\big(\lct(\fa_\bullet)^n\cdot\vol(\fa_\bullet)\big)\leq\inf_{v}\hvol(v).
\end{equation}
Combining \eqref{lct1}\eqref{lct2}\eqref{lct3} together, we finish
the proof.
\end{proof}

\begin{rem}
\begin{enumerate}[label=(\alph*)]
 \item The first equality in \eqref{infs} was observed by Musta\c t\u a
 in \cite{mus02} where he showed that 
 $\lct(\fa_\bullet)^n\cdot\mult(\fa_\bullet)\geq n^n$
 for any graded sequence of $\fm$-primary ideals $\fa_\bullet$ of a 
 regular local ring of dimension $n$.
 The inequality \eqref{lct2.5} was essentially realized by
 Li in \cite[Remark 2.8]{li15a} where he considered the smooth case.
 \item By Izumi's theorem, the infimums in Theorem \ref{compare2}
are always positive (see Section \ref{secval}).
 \end{enumerate}
\end{rem}

We will use Theorem \ref{compare2} to prove Theorem \ref{k-ss-dfem}
based on \cite{li15b,ll16}.

\begin{thm}[=Theorem \ref{k-ss-dfem}]\label{dfemtype}
 Let $V$ be a Fano manifold of dimension $n-1$.
 Assume $H=-rK_V$ is an ample Cartier divisor for 
 some $r\in\bQ_{>0}$. Let $X:=C(V,H)=\Spec\bigoplus_{k=0}^\infty
 H^0(V,mH)$ be the affine cone with cone vertex $o$.
 Then $V$ is K-semistable if and only if for any closed subscheme
 $Z$ of $X$ supported at $o$, the following inequality holds:
 \begin{equation}\label{eqdfemtype1}
  \lct(X;I_Z)^n\cdot\mult_Z X\geq 
  \frac{1}{r}((-K_V)^{n-1}).
 \end{equation}
 \end{thm}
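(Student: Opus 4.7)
The plan is to reduce Theorem \ref{dfemtype} to the characterization of K-semistability via normalized-volume minimizers on cone singularities, using Theorem \ref{compare2} as the bridge between the ideal-theoretic invariant $\lct^n\cdot\mult$ and the valuation-theoretic invariant $\hvol$.

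First I would compute $\hvol$ of the canonical divisorial valuation $v_0=\ord_V$, where $V$ is identified with the exceptional divisor of the blowup $\pi\colon\widetilde{X}\to X$ at the cone vertex $o$. A standard cone computation identifies $\widetilde{X}$ with the total space of $\cO_V(-H)$, so that the normal bundle satisfies $E|_E\sim -H\sim rK_V$; adjunction then gives $K_{\widetilde{X}}=\pi^*K_X+\bigl(\tfrac1r-1\bigr)E$ and hence $A_X(v_0)=\tfrac1r$. The valuation ideals are the graded truncations $\fa_m(v_0)=\bigoplus_{k\ge m}H^0(V,kH)$, so asymptotic Riemann--Roch on $V$ yields
\[
\vol(v_0)=\lim_{m\to\infty}\frac{\sum_{k=0}^{m-1}h^0(V,kH)}{m^n/n!}=(H^{n-1})=r^{n-1}((-K_V)^{n-1}),
\]
whence $\hvol(v_0)=\tfrac{1}{r}((-K_V)^{n-1})$.

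By Theorem \ref{compare2} applied at the klt point $o\in X$, inequality \eqref{eqdfemtype1} holding for every closed subscheme $Z$ supported at $o$ is equivalent to
\[
\inf_{v\in\Val_{X,o}}\hvol(v)\ge \tfrac{1}{r}((-K_V)^{n-1})=\hvol(v_0).
\]
Since $v_0\in\Val_{X,o}$ already realizes the right-hand side, this is in turn equivalent to saying that $v_0$ minimizes $\hvol$ on $\Val_{X,o}$.

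It therefore suffices to establish that $V$ is K-semistable if and only if $v_0$ is an $\hvol$-minimizer on $X=C(V,H)$. The \emph{if} direction is the logarithmic analogue of Theorem \ref{mainthm2} developed in \cite{ll16}: applying the Fujita-type filtration argument of Section \ref{pf} to the pair $(V,-rK_V)$ produces the lower bound $\hvol(v)\ge\hvol(v_0)$ directly for every $v\in\Val_{X,o}$. The \emph{only if} direction is the main result of \cite{li15b}: given any valuation $v\in\Val_{X,o}$ with $\hvol(v)<\hvol(v_0)$, one uses its associated flag ideals on $X\times\bA^1$ to construct a test configuration of $(V,-rK_V)$ with negative CM weight, thereby contradicting K-semistability. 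The main obstacle is thus properly importing these two external statements and aligning them with the cone-vertex setup; once that is done, Theorem \ref{dfemtype} follows formally by combining them with Theorem \ref{compare2}.
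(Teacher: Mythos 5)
Your proposal is correct and follows essentially the same route as the paper: compute $A_X(\ord_V)=1/r$ and $\vol(\ord_V)=r^{n-1}((-K_V)^{n-1})$ so that the right-hand side of \eqref{eqdfemtype1} equals $\hvol(\ord_V)$, invoke Theorem \ref{compare2} to translate the family of ideal inequalities into the statement that $\ord_V$ minimizes $\hvol$ on $\Val_{X,o}$, and then quote the equivalence of that minimization with K-semistability of $V$ from \cite{li15b,ll16}. The extra detail you supply (the adjunction computation on the blowup of the vertex and the asymptotic Riemann--Roch count) just makes explicit what the paper asserts in one line.
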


\begin{proof}
 It is clear that $A_X(\ord_V)=r^{-1}$ and $\vol(\ord_V)=r^{n-1}((-K_V)^{n-1})$.
 Hence the right hand side of \eqref{eqdfemtype1} is equal to $\hvol(\ord_V)$.
 From Theorem \ref{compare2} we know that \eqref{eqdfemtype1}
 is equivalent to saying that the normalized volume $\hvol$ is minimized
 at $\ord_V$ over $(X,o)$.
 This is equivalent to $V$ being K-semistable by
 \cite[Corollary 1.5]{ll16}.
\end{proof}

\subsection{Finding minimizers}\label{minimizer}

Assume that the infimum of $\lct^n(\fa)\cdot\mult(\fa)$ is attained by 
some ideal $\fa=\fa_0$. Then Lemma \ref{compare1} and Theorem \ref{compare2}
together imply that the divisorial valuation $v_0$ that computes $\lct(\fa_0)$
is a minimizer of $\hvol$. In this subsection, we will study the converse problem,
i.e. assume there exists a minimizer $v_*$ of $\hvol$ that is 
divisorial, can the infimum of $\lct^n\cdot\mult$ for ideals be achieved?
In Proposition \ref{minlctmult}, we give an affirmative answer to this problem
assuming an extra condition on $v_*$.
\begin{defn}
Let $R$ be a Noetherian local domain. For a real valuation $v$ of $K(R)$ 
dominating $R$, we define the \textit{associate graded algebra} of $v$ as
\[
\mathrm{gr}_v R:=\bigoplus_{m\in\Phi}\fa_m(v)/\fa_{>m}(v),
\]
where $\Phi$ is the valuation semigroup of $v$.
\end{defn}

\begin{prop}\label{minlctmult}
Assume that $\hvol(\cdot)$ is minimized at a divisorial valuation $v_*$. If the graded algebra $\mathrm{gr}_{v_*}\cO_{X,p}$ is a finitely generated $\bC$-algebra, then for any sufficiently divisible $k\in\bZ_{>0}$ we have
\[
 \lct(\fa_k(v_*))^n\cdot\mult(\fa_k(v_*))=\hvol(v_*).
\]
In particular, the function $\big(\lct(\cdot)^n\cdot\mult(\cdot)\big)$ attains its minimum at $\fa_k(v_*)$.
\end{prop}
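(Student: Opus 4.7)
The plan is to combine the finite generation hypothesis with the universal inequalities relating $\lct^n\cdot\mult$ to $\hvol$, sandwiching the quantity $\lct(\fa_k(v_*))^n\cdot\mult(\fa_k(v_*))$ between $\hvol(v_*)$ and itself. After rescaling (which preserves $\hvol$) we may assume $v_*$ is integer-valued, say $v_* = \ord_E$ for a prime divisor $E$ over $X$.

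The main step is to show that finite generation of $\mathrm{gr}_{v_*}\cO_{X,p}$ forces the ``Veronese'' structure $\fa_{km_0}(v_*) = \fa_{m_0}(v_*)^k$ for some fixed $m_0$ and all $k\ge 1$. One passes from $\mathrm{gr}_{v_*}\cO_{X,p}$ being finitely generated to the extended Rees algebra $\bigoplus_m \fa_m(v_*)$ being finitely generated over $\cO_{X,p}$ by a standard Nakayama-type lift of homogeneous generators; then taking an $m_0$-th Veronese for $m_0$ sufficiently divisible (a common multiple of the degrees of the generators) produces an algebra generated in degree $1$, which is exactly the desired identity. I expect this to be the main obstacle, since it is the only place the finite generation hypothesis is used, and it must be done carefully with regard to the valuation semigroup.

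With the Veronese identity in hand, the volume of $v_*$ unwinds cleanly into a multiplicity:
\[
\mult(\fa_{m_0}(v_*)) = \lim_{k\to\infty}\frac{\ell(\cO_X/\fa_{m_0}(v_*)^k)}{k^n/n!}
= \lim_{k\to\infty}\frac{\ell(\cO_X/\fa_{km_0}(v_*))}{k^n/n!} = m_0^n\,\vol(v_*).
\]
On the log canonical threshold side, because $v_*\in\Val_X$ satisfies $v_*(\fa_{m_0}(v_*)) = m_0$, the general inequality $\lct(\fa) \le A_X(v)/v(\fa)$ (valid for any real valuation with $A_X(v)<+\infty$, cf.\ Lemma~\ref{compare1} and \cite[Thm.~1.2]{bdffu}) gives
\[
\lct(\fa_{m_0}(v_*)) \le \frac{A_X(v_*)}{m_0}.
\]
Multiplying yields the upper bound
\[
\lct(\fa_{m_0}(v_*))^n\cdot\mult(\fa_{m_0}(v_*)) \le A_X(v_*)^n\,\vol(v_*) = \hvol(v_*).
\]

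Finally, the matching lower bound is free: by Theorem~\ref{compare2} together with the hypothesis that $v_*$ is a global minimizer of $\hvol$,
\[
\lct(\fa_{m_0}(v_*))^n\cdot\mult(\fa_{m_0}(v_*))
\ge \inf_{\fa}\bigl(\lct(\fa)^n\cdot\mult(\fa)\bigr)
= \inf_{v}\hvol(v) = \hvol(v_*).
\]
Combining the two bounds gives equality, which simultaneously shows that $\fa_{m_0}(v_*)$ realizes the infimum of $\lct(\cdot)^n\cdot\mult(\cdot)$. The same argument then applies with $m_0$ replaced by any of its positive multiples, yielding the statement for all sufficiently divisible $k$.
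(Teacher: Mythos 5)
Your proposal is correct and follows essentially the same route as the paper: finite generation of $\mathrm{gr}_{v_*}\cO_{X,p}$ lifts by a Nakayama argument (the paper's Lemma~\ref{f.g.}) to finite generation of $\bigoplus_{m\ge 0}\fa_m(v_*)$, a sufficiently divisible Veronese gives $\fa_{km}=\fa_k^m$, and then the upper bound follows from $\lct(\fa)\le A_X(v_*)/v_*(\fa)$ together with $\mult(\fa_k(v_*))=k^n\vol(v_*)$, while the lower bound is Theorem~\ref{compare2} plus minimality of $v_*$. The one ingredient you leave implicit in the "Nakayama-type lift" is that the filtration $\fa_1(v_*)\supset\fa_2(v_*)\supset\cdots$ must define the $\fm_p$-adic topology (so that $\fa_{m+l}\subset\fm_p\fa_m$ for $l\gg 0$); the paper supplies this via Izumi's theorem.
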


\begin{proof}
By Izumi's theorem we know that the filtration $\fa_1(v_*)\supset\fa_2(v_*)\supset\cdots$ defines the same topology on $\cO_{X,p}$ as the $\fm_p$-adic topology on $\cO_{X,p}$.
Then Lemma \ref{f.g.} implies $\oplus_{m\geq 0}\fa_m(v_*)$ is
a finitely generated $\cO_{X,p}$-algebra. Therefore, the 
degree $k$ Veronese subalgebra $\oplus_{m\geq 0}\fa_{km}(v_*)$ 
is generated in degree $1$ for $k\in\bZ_{>0}$ sufficiently divisible.
This means $\fa_{km}=\fa_k^m$ for any $m\geq 0$. From inequality \eqref{lct2.5} we have
\begin{align*}
 \hvol(v_*) & \geq \lct(\fa_\bullet(v_*))^n\cdot\vol(\fa_\bullet(v_*))\\
 & = \lct(\fa_{k\bullet}(v_*))^n\cdot\vol(\fa_{k\bullet}(v_*))\\
 & = \lct(\fa_k(v_*))^n\cdot\mult(\fa_k(v_*)).
\end{align*}
On the other hand, Theorem \ref{compare2} also implies that 
$\lct(\fa_k(v_*))^n\cdot\mult(\fa_k(v_*))\geq \hvol(v_*)$ because
$v_*$ is a minimizer of $\hvol(\cdot)$. 
So we prove the proposition.
\end{proof}

\begin{lem}\label{f.g.}
Let $(R,\fm)$ be a Noetherian local ring. Let $\fa_\bullet$ be a graded sequence of $\fm$-primary ideals. Assume that the linear topology on $R$ defined by the filtration $\fa_1\supset\fa_2\supset\cdots$ is the same as the $\fm$-adic topology on $R$. Assume that
$\oplus_{m\geq 0}\fa_m/\fa_{m+1}$ is a finitely generated $R/\fa_1$-algebra. Then $\oplus_{m\geq 0}\fa_m$ is a finitely generated $R$-algebra.
\end{lem}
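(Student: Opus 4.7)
Since $\mathrm{gr} A := \bigoplus_m \fa_m/\fa_{m+1}$ is finitely generated as an $R/\fa_1$-algebra, I can pick homogeneous generators of degrees at most some integer $M \ge 1$. Factoring one such generator out of each monomial shows that for every $m > M$,
\[
 \mathrm{gr}_m A \;=\; \sum_{i=1}^M \mathrm{gr}_i A \cdot \mathrm{gr}_{m-i} A,
\]
and this lifts to the identity $\fa_m = V_m + \fa_{m+1}$ inside $R$, where $V_m := \sum_{i=1}^M \fa_i\,\fa_{m-i}\subset \fa_m$.

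The key observation is the monotonicity $V_{m+1}\subset V_m$, which is immediate from $\fa_{m+1-i}\subset\fa_{m-i}$. Iterating the identity $\fa_m = V_m + \fa_{m+1}$ using this monotonicity yields $\fa_m\subset V_m + \fa_{m+k}$ for every $k\ge 1$. By the topology hypothesis, for any prescribed $N$ I can choose $k$ large enough that $\fa_{m+k}\subset\fm^N$, so $\fa_m\subset V_m+\fm^N$. Applying the Artin--Rees lemma to the ideal $\fa_m\subset R$ with respect to $\fm$ produces a constant $c$ with $\fa_m\cap\fm^N\subset \fm\fa_m$ for $N\ge c+1$; combining, $\fa_m = V_m + \fm\,\fa_m$.

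Nakayama's lemma applied to the finitely generated $R$-module $\fa_m/V_m$ (using that $\fm$ is the Jacobson radical of $R$) then forces $\fa_m = V_m$ for every $m > M$. Consequently $\bigoplus_m \fa_m$ is generated as an $R$-algebra by the pieces $\fa_1,\ldots,\fa_M$, each of which is a finitely generated $R$-module by Noetherianity; choosing a finite $R$-generating set for each $\fa_i$ in this range gives a finite generating set for the whole Rees algebra over $R$.

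The main obstacle is bridging the gap between generating $\fa_m$ modulo $\fa_{m+1}$ and generating it on the nose. The crucial move is to phrase the approximation in terms of the ideals $V_m$ rather than in terms of lifts of individual monomial generators of $\mathrm{gr} A$: this is precisely what makes the monotonicity $V_{m+1}\subset V_m$ available, so that iteration accumulates inside $V_m$ alone instead of producing an uncontrolled sum $V_m+V_{m+1}+\cdots$, and the Artin--Rees plus Nakayama argument can then close up the gap.
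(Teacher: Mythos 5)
Your proof is correct, and while it follows the same overall skeleton as the paper's (express $\fa_m$ modulo $\fa_{m+1}$ in terms of lower-degree data, iterate, use the topology hypothesis to land in $\fm\fa_m$, close with Nakayama), the implementation is genuinely different and in fact tidier. The paper lifts homogeneous generators $x_i$ of the associated graded ring to elements $y_i\in\fa_{d_i}$, works with the graded subalgebra $\oplus\fb_m$ they generate, and upon iterating $\fa_m=\fb_m+\fa_{m+1}$ obtains the increasing sums $\fb_m+\cdots+\fb_{m+l-1}$; it must then invoke Noetherianity to stabilize these to an ideal $\fc_m$ before Nakayama applies, and afterwards still needs a separate degree-shifting argument (placing each $y_i$ in every degree $j\le d_i$) to see that $\oplus\fc_m$ is a finitely generated $R$-algebra. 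Your choice of the ideals $V_m=\sum_{i=1}^M\fa_i\fa_{m-i}$ sidesteps both extra steps: the monotonicity $V_{m+1}\subset V_m$ makes the iteration accumulate inside the single ideal $V_m$, and the conclusion $\fa_m=V_m$ for $m>M$ says outright that the Rees algebra is generated in degrees at most $M$, so finite generation is immediate. One minor remark: your appeal to Artin--Rees (plus the implicit modular-law step $\fa_m\cap(V_m+\fm^N)=V_m+(\fa_m\cap\fm^N)$) is valid but heavier than necessary; since each $\fa_m$ is $\fm$-primary, $\fm\fa_m$ already contains a power of $\fm$, hence contains some $\fa_{m+k}$ by the equality of topologies, which is how the paper reaches $\fa_m=V_m+\fm\fa_m$ in one step.
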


\begin{proof}
We take a set of homogeneous generators of $\oplus_{m\geq 0}\fa_m/\fa_{m+1}$, say $x_1,\cdots,x_r$. Let $d_i$ be the degree of $x_i$, i.e. $x_i\in \fa_{d_i}/\fa_{d_i+1}$. Take $y_i\in\fa_{d_i}$ such that
$x_i=y_i+\fa_{d_i+1}$. Denote by $\oplus_{m\geq 0}\fb_m$ the graded $R$-subalgebra of $\oplus_{m\geq 0}\fa_m$ generated by $y_1,\cdots,y_r$. 

Since $\{x_i\}$ generates $\oplus_{m\geq 0}\fa_m/\fa_{m+1}$, we have $\fa_m=\fb_m+\fa_{m+1}$. Hence for any $l>0$ the induction yields
\[
\fa_m=(\fb_m+\fb_{m+1}+\cdots+\fb_{m+l-1})+\fa_{m+l}.
\]
Because $R$ is Noetherian, the ideals $(\fb_m+\fb_{m+1}+\cdots+\fb_{m+l-1})$ stabilize for $l$ sufficiently large. Denote the stabilized ideal by $\fc_m$, then we have $\fa_m=\fc_m+\fa_{m+l}$ for $l$ sufficiently large. Since the filtration $\fa_1\supset\fa_2\supset\cdots$ defines the same topology as the $\fm$-adic topology, we may take $l$ sufficiently large such that $\fa_{m+l}\subset\fm\fa_m$. As a result, we have $\fa_m=\fc_m+\fm\fa_m$ which implies that $\fa_m=\fc_m$ by Nakayama's lemma.

Now it suffices to show that $\oplus_{m\geq 0}\fc_m$ is a 
finitely generated graded $R$-algebra. By definition, 
$\fc_m$ is generated by monomials in $\{y_i\}$ of weighted 
degree at least $m$. For any $0\leq j\leq d_i$, denote by 
$y_{i,j}$ the homogeneous element $y_i\in\fa_{j}$ of degree
$j$. Then any monomial in $\{y_i\}$ of weighted degree at least
$m$ is equal to a monomial in $\{y_{i,j}\}$ of weighted degree $m$. Hence $\oplus_{m\geq 0}\fc_m$ is generated by $\{y_{i,j}\}$ as a graded $R$-algebra, which finishes the proof.
\end{proof}

\begin{ps}
 Since the first version of this article was posted on the arXiv,
 there has been a lot of progresses studying properties of the minimizer
 of $\hvol$. Here we mention two of them which are useful in our
 presentation.
 \begin{itemize}
  \item Blum \cite{blu16} proved that there always exists a minimizer
  of $\hvol$ for any klt singularity $x\in X$;
  \item If a divisorial valuation $v_*=\ord_S$ minimizes $\hvol$
  in $\Val_{X,x}$, then $S$
  is necessarily a \emph{Koll\'ar component} (see \cite{lx16} for a definition).
  In particular, $\mathrm{gr}_{v_*}\cO_{X,x}$ is finitely generated. This
  was proved by Blum \cite{blu16} and Li-Xu \cite{lx16} independently.
 \end{itemize}
\end{ps}

\section{Maximal volume cases}\label{maxvol}

\subsection{K-polystable varieties of maximal volume}

The following lemma is the main technical tool to prove the equality
case of Theorem \ref{quotsing} and Corollary \ref{ck}.

\begin{lem}\label{kpoly}
 Let $X$ be a K-polystable $\bQ$-Fano variety of dimension $n$.
 Let $p\in X$ be a closed point. Assume that there exists a 
 divisorial valuation $v_*\in\Val_{X,p}$ satisfying  $((-K_X)^n)=\left(1+\frac{1}{n}\right)^n
 \hvol(v_*)$.
 Then we have 
 \begin{enumerate}
 \item $v_*$ is a minimizer of the normalized volume function
 $\hvol$ on $\Val_{X,p}$;
 \item $X\cong \Proj~ (\mathrm{gr}_{v_*}\cO_{X,p})[x]$, where
 $x$ is a homogeneous element of degree $1$ (we may view $X$ as
 an orbifold projective cone over $F:=\Proj~\mathrm{gr}_{v_*}\cO_{X,p}$);
 \end{enumerate}
\end{lem}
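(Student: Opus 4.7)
For part (1), the argument is immediate: since $X$ is K-polystable it is K-semistable, hence Ding-semistable by Theorem \ref{DingKE}(2). Applying Theorem \ref{main2} to an arbitrary $v\in\Val_{X,p}$ gives
\[
\hvol(v)\;\geq\;\left(\frac{n}{n+1}\right)^{\!n}((-K_X)^n)\;=\;\hvol(v_*),
\]
so $v_*$ is a minimizer of $\hvol$ on $\Val_{X,p}$.

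For part (2), the plan is to produce a normal test configuration $(\cX,\cL)/\bA^1$ of $(X,-rK_X)$ whose central fiber is the claimed projective orbifold cone and whose Donaldson--Futaki invariant vanishes, and then invoke K-polystability to force $\cX\cong X\times\bA^1$. First I would trace the equality $((-K_X)^n)=(1+1/n)^n\hvol(v_*)$ back through the proof of Theorem \ref{main2}; equality throughout the chain of inequalities there forces the filtration-volume identity
\[
\vol\bigl(\cF_{v_*}S^{(x)}\bigr)\;=\;\max\bigl\{(L^n)-\vol(v_*)x^n,\ 0\bigr\}\quad\text{for all }x\geq 0,
\]
together with equality in Proposition \ref{val1}. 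These are the precise volume data needed to pin down the invariants of the test configuration constructed below.

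Next, since $v_*$ is divisorial and (by part (1)) minimizes $\hvol$ on $\Val_{X,p}$, the postscript remark (Blum and Li--Xu) gives $v_*=c\cdot\ord_S$ for a Koll\'ar component $S$, and $R:=\mathrm{gr}_{v_*}\cO_{X,p}$ is a finitely generated $\bC$-algebra. I would then form the Rees algebra of the filtration $\cF_{v_*}$ on the section ring $S_\bullet=\bigoplus_m H^0(X,-mrK_X)$ and Proj it over $\bA^1=\Spec\bC[t]$ to build a semi test configuration $(\cX,\cL)\to\bA^1$. After passing to a sufficiently divisible Veronese, finite generation of $R$ upgrades the associated Rees algebra to a finitely generated one, so $(\cX,\cL)$ is a legitimate test configuration, and its central fiber is $\Proj\bigoplus_m \mathrm{gr}^{\cF_{v_*}}S_m\cong\Proj R[x]$, the projective orbifold cone over $F=\Proj R$.

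Finally, I would evaluate $\Ding(\cX,\cL)$ via the integral formula in Definition \ref{ding}; substituting the volume identities above should yield $\Ding(\cX,\cL)=0$. Combined with the inequality $\CM\leq\Ding$ and K-semistability of $X$ (which gives $\CM\geq 0$), this forces $\CM(\cX,\cL)=0$, whereupon K-polystability gives $\cX\cong X\times\bA^1$. Hence the central fiber equals $X$, producing the desired isomorphism $X\cong\Proj R[x]$. The step I expect to be the main obstacle is the middle one: carefully upgrading finite generation of the local ring $R$ to finite generation and normality of the global Rees algebra, identifying its central fiber scheme-theoretically (not merely set-theoretically) with $\Proj R[x]$, and matching the filtration-volume equalities to the precise vanishing of the Ding invariant.
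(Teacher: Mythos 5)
Part (1) of your proposal is exactly the paper's argument. For part (2), however, the final deduction contains a genuine error: you invoke the inequality $\CM\leq\Ding$ to pass from $\Ding(\cX,\cL)=0$ to $\CM(\cX,\cL)=0$, but Berman's inequality goes the other way, $\Ding(\cX,\cL)\leq\CM(\cX,\cL)$ for a normal test configuration (this is precisely why Ding-semistability implies K-semistability; the sentence in the proof of Theorem \ref{DingKE} here has the inequality reversed by a typo, but the logic of that implication only works with $\Ding\leq\CM$). So $\Ding=0$ merely re-proves $\CM\geq 0$, which you already know from K-semistability, and you cannot conclude $\CM=0$. You would need either the equality case of Fujita's comparison theorem (which requires the test configuration to be of a special form that yours is not, since $\cL$ is not proportional to $-K_{\cX/\bA^1}$) or, as the paper does, a direct computation of $\CM(\cX,\cL)$ by intersection numbers: the paper's $\cL$ and $K_{\bar\cX/\bP^1}$ both differ from pullbacks from $X\times\bP^1$ only by multiples of $\cO_{\bar\cX}(1)$, which is supported over the single point $(p,0)$, so all mixed terms vanish and $\CM$ collapses to a multiple of $n(1+\tfrac1n)-(n+1)=0$. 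Relatedly, the second term of $\Ding$ is a log canonical threshold on the total space, not an intersection number, so "substituting the volume identities" does not by itself evaluate it.

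Your construction is also genuinely different from the paper's, and the difference creates further work you have not accounted for. The paper does not take the Rees algebra of $\cF_{v_*}$ on the section ring; it sets $\cX:=\Proj_{X\times\bA^1}\oplus_m\fa_m(\bar v_*)$ (the blow-up of the flag ideal of the extended valuation), so that the exceptional divisor is \emph{locally} identified with $E\cong\Proj(\mathrm{gr}_{v_*}\cO_{X,p})[s]$ with no cohomological input; the price is that $\cX_0=\hX+E$ has two components, $\cL$ is only semiample with $\cL^\perp=\hX$ (this is where the equality $((-K_X)^n)=(1+\tfrac1n)^n\hvol(v_*)$ enters, via Remark \ref{volequality} and Proposition \ref{minlctmult}), and one must prove $(\cX,\hX)$ is plt to apply base-point freeness, pass to the ample model $\cY$, and finally use normality of $\cY_0\cong X$ to upgrade the finite birational map $E\to\cY_0$ to an isomorphism. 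In your route the relative ampleness is essentially free, but the scheme-theoretic identification of the central fiber $\Proj\bigoplus_m\mathrm{gr}^{\cF_{v_*}}S_m$ with $\Proj(\mathrm{gr}_{v_*}\cO_{X,p})[x]$ is a nontrivial global statement (it needs surjectivity of $\cF^\lambda S_m/\cF^{>\lambda}S_m\to H^0$ of the graded quotients, i.e.\ Serre-type vanishing for $L^m\cdot\fa_{>\lambda}(v_*)$ uniformly in $\lambda\leq m e_+$), which you flag as an obstacle but do not resolve. Until both the $\CM=0$ step and this identification are supplied, the proof of part (2) is incomplete.
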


\begin{proof}
 Part (1) is a direct consequence of Theorem \ref{mainthm2}.
 
 We need some preparation to prove part (2).
 Since $v_*$ is a divisorial minimizer of $\hvol$ by (1), it is induced by
 a Koll\'ar component by \cite[Proposition 4.9]{blu16} and \cite[Theorem 1.2]{lx16}. 
 In particular, $\mathrm{gr}_{v_*}\cO_{X,p}$ is a finitely generated
 $\bC$-algebra.
 Let $\bar{v}_*$ be the corresponding divisorial valuation
 on $X\times\bA^1$ (see \eqref{vbar}). Let $t$ be the parameter 
 of $\bA^1$. Let $\Spec R$ be an affine Zariski open subset of $X$.
 By Lemma \ref{f.g.},  $\oplus_{m\geq 0} \fa_m(v_*)$ is a finitely
 generated $R$-algebra.
 Then 
 \[
  \fa_m(\bar{v}_*)=\fa_m(v_*)+\fa_{m-1}(v_*)t+\cdots+\fa_{1}(v_*)
  t^{m-1}+(t^m).
 \]
 In order to keep track of the degree of the graded ring $\oplus_{m\geq 0}\fa_m(\bar{v}_*)$, 
 denote by $s$ the element $t$ in $\fa_1(\bar{v}_*)=\fa_1(v_*)+(t)$.
 Hence $\deg s=1$, $\deg t=0$ and we have
 \[
  \fa_m(\bar{v}_*)=\fa_m(v_*)+\fa_{m-1}(v_*)s+\cdots+\fa_{1}(v_*)
  s^{m-1}+s^m R[t],
 \]
 where $\deg \fa_{i}(v_*)=i$. As an $R[t]$-algebra, 
 $\oplus_{m\geq 0}\fa_m(\bar{v}_*)$ is generated by $\oplus_{m\geq 0}
 \fa_m(v_*)$ and $s$. Since $\oplus_{m\geq 0} \fa_m(v_*)$ is a finitely
 generated $R$-algebra, we have that 
 $\oplus_{m\geq 0}\fa_m(\bar{v}_*)$
 is of finite type over $\cO_{X\times \bA^1}$. 
 
 Let $\cX:=\Proj_{X\times\bA^1} \oplus_{m\geq 0}\fa_m(\bar{v}_*)$,
 hence $\cX$ is normal by Lemma \ref{primeF}. Denote by $g:\cX\to X\times\bA^1$ the projection. Denote
 the composite map by $\pi:\cX\to \bA^1$. The $\bQ$-line bundle $\cL$ 
 on $\cX$ is defined as
 \[
  \cL:=g^*(-K_{X\times\bA^1/\bA^1})+
  \left(1+\frac{1}{n}\right)A_X(v_*)\cO_{\cX}(1),
 \]
 where we treat $\cO_{\cX}(1)=\frac{1}{k}\cO_{\cX}(k)$ as a $\bQ$-line bundle
 for sufficiently divisible $k\in\bZ_{>0}$. 
 Let $\hX:=g_*^{-1}(X\times\{0\})$ and $\sigma=g|_{\hX}:\hX\to X$.
 Let $E$ be the exceptional divisor of $g$ given by $I_{E}:=
 \cO_{\cX}(1)$. Hence 
 \[
 E\cong \Proj ~\oplus_{m\geq 0}
 \fa_m(\bar{v}_*)/\fa_{m+1}(\bar{v}_*)=\Proj 
 ~(\mathrm{gr}_{v_*}\cO_{X,p})[s],
 \]
 where $s$ is a homogeneous element of degree $1$.
 Since $\mathrm{gr}_{v_*}\cO_{X,p}$ is an integral domain,
 $E$ is irreducible and reduced.
 
  We will show the following statements:
 \begin{enumerate}[label=(\alph*)]
  \item $\cL$ is $\pi$-nef with $\cL^\perp=\hX$;
  \item $\cL$ is $\pi$-semiample, hence $(\cX,\cL)$ is a semi $\bQ$-test configuration
  of $(X,-K_X)$;
  \item $\CM(\cX,\cL)=0$.
 \end{enumerate}

 For (a), we see that $\cL_t\cong -K_{\cX_t}$ is ample
 whenever $t\neq 0$. From the definition we know
 that $\cL$ is $g$-ample, hence $\cL|_E$ is ample. On $\hX$,
 we have that 
 \[
 \cL|_{\hX}\cong \sigma^*(-K_X)+ \left(1+\frac{1}{n}\right)
 A_X(v_*)\cO_{\hX}(1).
 \]
 For $k\in\bZ_{>0}$ sufficiently divisible, let $Z$ be the thickening of
 $p$ in $X$ such that $I_Z=\fa_k(v_*)$. From (1) we know that
 $v_*$ minimizes $\hvol$, hence $\lct(X;I_Z)^n\cdot\mult_Z X
 =\hvol(v_*)$ by Proposition \ref{minlctmult}. Since 
 \[
 ((-K_X)^n)=\left(1+\frac{1}{n}\right)^n
 \hvol(v_*)=\left(1+\frac{1}{n}\right)^n \lct(X;I_Z)^n\cdot\mult_Z X,
 \]
 by Lemma \ref{volume} and Remark \ref{volequality} we have that $\cL|_{\hX}$ is nef with
 zero top self intersection number. In addition, $\epsilon_Z(-K_X)
 =A_X(v_*)/m$. As a result, $\cL^\perp=\hX$.
 \medskip
 
 For (b), we first show that $\cX$ is normal $\bQ$-Gorenstein 
 with klt singularities. By Lemma \ref{primeF} we have that $E=
 \mathrm{Ex}(g)$ is a $\bQ$-Cartier prime divisor on $\cX$.
 Hence 
 \[
  K_{\cX}=g^*K_{X\times\bA^1}+(A_{\cX}(\ord_E)-1)E
 \]
 is $\bQ$-Cartier. To show that $\cX$ has klt singularities,
 it suffices to show that $(\cX,\hX)$ is plt.
 By Lemma \ref{primeF}, we have $\bar{v}_*=\ord_E$.
 Hence $\cX_0=\hX+E$ as Weil divisors since $\bar{v}_*(t)=1$.
 In particular, $K_{\cX}+\hX$ is $\bQ$-Cartier.
 
 It is clear that 
 \[
  I_E/(I_E\cdot I_{\hX})=I_E|_{\hX}=\cO_{\cX}(1)|_{\hX}=\cO_{\hX}(1)\subset\cO_{\hX}.
 \]
 Since the kernel of $I_E\to \cO_{\hX}$ is $I_E\cap I_{\hX}$,
 we have that $I_E\cdot I_{\hX}=I_E\cap I_{\hX}$.
 On the other hand, by computing graded ideals we know that
 $I_E+I_{\hX}=I_F$. Denote by $\eta$ the generic point of $F$,
 then $\cO_{\hX,\eta}=\cO_{\cX,\eta}/I_{\hX,\eta}$ is a DVR since $\hX$ is normal.
 Applying Lemma \ref{principal} to $(R,\fp,\fq)=(\cO_{\cX,\eta},
 I_{E,\eta}, I_{\hX,\eta})$ yields that $E$ is Cartier at $\eta$.
 Since $\cX_0=\hX+E$ is Cartier and $\hX\cap E=F$, we have that
 $\hX=\cX_0-E$ is Cartier in codimension $2$. Next we notice that 
 $\hX\to X$ produces a Koll\'ar component, hence $\hX$ has klt singularities.
 Thus  $(\cX,\hX)$ is plt by inverse of adjunction \cite[Theorem 5.50]{km98}.
 
 By Shokurov's base-point-free theorem, to show $\cL$ is $\pi$-semiample
 we only need to show that $\cL-K_{\cX/\bA^1}$ is $\pi$-ample.
 It follows from Lemma \ref{primeF} that $\bar{v}_*=\ord_E$ and $E\sim_{\bQ}\cO_{\cX}(-1)$.
 Hence we have 
 \begin{align*}
 -K_{\cX/\bA^1}& =g^*(-K_{X\times\bA^1/\bA^1})-
 (A_{X\times\bA^1}(\bar{v}_*)-1)E\\
 &= g^*(-K_{X\times\bA^1/\bA^1})+ A_X(v_*)\cO_{\cX}(1).
 \end{align*}
 Since $0<A_X(v_*)<(1+\frac{1}{n})A_X(v_*)$, we see that
 $-K_{\cX/\bA^1}$ is $\pi$-ample. Hence $\cL-K_{\cX/\bA^1}$ is $\pi$-ample.
 \medskip
 
 For (c), we know that 
 \[
  \CM(\cX,\cL)=\frac{1}{(n+1)((-K_X)^n)}\left(n
(\bar{\cL}^{n+1})+(n+1)(\bar{\cL}^n\cdot K_{\bar{\cX}/\bP^1})\right).
 \]
 By definition of $\cL$ we know that
 \begin{align*}
  \bar{\cL}& =\bar{g}^*(-K_{X\times\bP^1/\bP^1})+ \left(1+\frac{1}{n}
  \right)A_X(v_*)\cO_{\bar{\cX}}(1),\\
  K_{\bar{\cX}/\bP^1}& =\bar{g}^*K_{X\times\bP^1/\bP^1}-
  A_X(v_*)\cO_{\bar{\cX}}(1).
 \end{align*}
 Since $\cO_{\bar{\cX}}(1)$ is supported in $\bar{g}^{-1}((p,0))$,
 we have $(\bar{g}^*K_{X\times\bP^1/\bP^1}\cdot
 \cO_{\bar{\cX}}(1))=0$ as a cycle. Next, it is clear that
 $((\bar{g}^*K_{X\times\bP^1/\bP^1})^{n+1})=(K_{X\times\bP^1/\bP^1}^{n+1})
  =0.$
 Hence we have
 \begin{align*}
  (\bar{\cL}^{n+1})  & = \left(1+\frac{1}{n}\right)^{n+1} A_X(v_*)^{n+1}
  (\cO_{\bar{\cX}}(1)^{n+1}),\\
    (\bar{\cL}^n\cdot K_{\bar{\cX}/\bP^1})& = 
    -\left(1+\frac{1}{n}\right)^n A_X(v_*)^{n+1}
  (\cO_{\bar{\cX}}(1)^{n+1}).
 \end{align*}
Thus $\CM(\cX,\cL)=0$.
\medskip

Now we are ready to prove part (2).
By (b) we know that $\cL$ is semiample. Denote the ample model
of $(\cX,\cL)/\bA^1$ by $(\cY,\cM)$, with $h:\cX\to\cY$ and $\cL=h^*\cM$.
Then $(\cY,\cM)$ is a normal $\bQ$-test configuration of $(X,-K_X)$.
Since $\bar{h}_* K_{\bar{\cX}/\bP^1}=K_{\bar{\cY}/\bP^1}$,
we have $\CM(\cY,\cM)=\CM(\cX,\cL)=0$. 
From (a) we know that $\cL^\perp=\hX$, so $\mathrm{Ex}(h)=\hX$. Hence
$\cY_0=h_*\cX_0= h_*(\hX+E)=h_*E$.
In particular, $\cY_0$ is a prime divisor on $\cY$. It is clear
that $\cL|_E=(1+\frac{1}{n})A_X(v_*)\cO_E(1)$ is
ample, hence $h|_E: E\to \cY_0$ is a finite birational morphism.
Since $X$ is K-polystable by assumption, hence
$\CM(\cY,\cM)=0$ yields that $X\cong \cY_0$.
Then $h|_E: E\to \cY_0$ has to be an isomorphism
because $\cY_0\cong X$ is normal. Hence $X\cong E\cong\Proj~(\mathrm{gr}_{v_*}
\cO_{X,p})[s]$. 
\end{proof}

\begin{lem}\label{primeF}
 Let $(X,p)$ be a normal singularity. Let $v_*$ be a 
 divisorial valuation on $X$ centered at $p$. Assume that
 $\mathrm{gr}_{v_*}\cO_{X,p}$ is a finitely generated $\bC$-algebra.
 Let $F:=\Proj~ \mathrm{gr}_{v_*}\cO_{X,p}$. Define $\hX:=\Proj_X\oplus_{m\geq 0}
 \fa_m(v_*)$ with the projection $\sigma:\hX\to X$. Then
 $\hX$ is normal, $F=\mathrm{Ex}(\sigma)$ is 
 a $\bQ$-Cartier prime divisor on $\hX$, and $v_*=\ord_F$.
 (Note that $\sigma$ is called a prime blow-up in \cite{ish04}.)
\end{lem}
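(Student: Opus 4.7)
The plan is to analyze $\hX=\Proj_X R$, where $R:=\bigoplus_{m\geq 0}\fa_m(v_*)$, and identify its exceptional structure with $F$. After rescaling I may assume $v_*$ is $\bZ$-valued with value group $\bZ$. By Izumi's theorem the $\fa_\bullet(v_*)$-adic topology agrees with the $\fm_p$-adic topology on $\cO_{X,p}$, so Lemma~\ref{f.g.} upgrades the hypothesis on $\mathrm{gr}_{v_*}\cO_{X,p}$ to finite generation of $R$ as an $\cO_{X,p}$-algebra. Passing to a sufficiently divisible Veronese $R^{(k)}=\bigoplus_m\fa_{km}(v_*)$, I may further assume the graded algebra is generated in degree one; this does not change $\Proj R$ but arranges that $\cO_{\hX}(k)$ is an honest line bundle, identifying $\hX$ with the blow-up of the valuation ideal $\fa_k(v_*)$. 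The product law $v_*(fg)=v_*(f)+v_*(g)$ forces $\mathrm{gr}_{v_*}\cO_{X,p}$ to be a domain, hence both $F$ and $\hX$ are integral schemes.

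Next I would address normality of $\hX$ and show that $F$ is a $\bQ$-Cartier prime divisor. Each $\fa_m(v_*)=\{f\in\cO_{X,p}:v_*(f)\geq m\}$ is integrally closed, and the $\fa_m$'s form a graded family; my preferred argument is to pick a proper birational $\pi:Y\to X$ with $Y$ normal containing a representative $E\subset Y$ such that $v_*=\ord_E$, observe that $\fa_m(v_*)=\pi_*\cO_Y(-mE)$, and identify $\hX$ with the relative ample model of $-E$ over $X$. Finite generation of $R$ is precisely the condition for this model to exist, and its normality is automatic. Alternatively, one can verify directly that $R$ is integrally closed in its fraction field by splitting an integral element into $\bG_m$-homogeneous components and showing each lies in the corresponding valuation ideal. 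The exceptional locus of $\sigma$ coincides with $V(R_{>0}\cdot\cO_{\hX})=\Proj(R/R_{>0}\cdot R)=\Proj\mathrm{gr}_{v_*}\cO_{X,p}=F$, which is irreducible and reduced by the domain property above; and since $\cO_{\hX}(1)$ is the ideal sheaf of $F$ with a positive power being a line bundle, $F$ is $\bQ$-Cartier.

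Finally, for $v_*=\ord_F$: for any nonzero $f\in\cO_{X,p}$ with $v_*(f)=m$, one has $f\in\fa_m\setminus\fa_{m+1}$, so $\sigma^*f$ generates $\sigma^{-1}(\fa_m)\cdot\cO_{\hX}$ at the generic point of $F$; since this ideal sheaf locally equals $\cO_{\hX}(-m)$ up to a unit, cutting out the $\bQ$-Cartier divisor $mF$, one gets $\ord_F(\sigma^*f)=m=v_*(f)$, and hence $v_*=\ord_F$ by multiplicativity. The main obstacle is the proof of normality of $\hX$; while the two approaches above are both standard in spirit, each requires some bookkeeping with finitely generated graded algebras. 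I would favor the ample-model route, since the normality of a relative ample model is an immediate output of MMP methods once finite generation of the underlying section ring is in hand.
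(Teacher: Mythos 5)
Your overall route matches the paper's: finite generation of $R=\bigoplus_m\fa_m(v_*)$ via Izumi and Lemma~\ref{f.g.}, a divisible Veronese to identify $\hX$ with the blow-up of a single valuation ideal $\fa_k(v_*)$ (so that $\cO_{\hX}(k)$ is invertible), integral closedness of the $\fa_m(v_*)$ for normality, and the domain property of $\mathrm{gr}_{v_*}\cO_{X,p}$ for irreducibility and reducedness of $F$. Your alternative normality argument via a model $Y\supset E$ with $v_*=\ord_E$ and $\fa_m(v_*)=\pi_*\cO_Y(-mE)$ is a legitimate substitute for the paper's one-line appeal to integral closedness.

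The genuine gap is in the last step. You assert that $\sigma^{-1}(\fa_m)\cdot\cO_{\hX}$ ``cuts out the $\bQ$-Cartier divisor $mF$'' near the generic point $\eta$ of $F$, and conclude $\ord_F(\sigma^*f)=m$. But the coefficient being exactly $m$ is precisely the content of $v_*=\ord_F$, and it is not automatic: what one knows a priori is that $\cO_{\hX}(k)=\cO_{\hX}(-lF)$ for \emph{some} positive integer $l$ (the invertible ideal sheaf $\cO_{\hX}(k)$ is co-supported on the irreducible exceptional divisor, but nothing yet pins down its multiplicity along $F$). Running your computation with this honest bookkeeping only yields $\ord_F=(l/k)\,v_*$, and one must separately rule out $l\neq k$. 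The paper does this by deriving the equivalence $v_*(f)\geq m\Leftrightarrow\ord_F(f)\geq ml/k$ and then using that both $v_*$ and $\ord_F$ are divisorial, hence have value group $\bZ$; alternatively one can use that $I_F=\cO_{\hX}(1)$ is the reduced ideal of $F$, so $\cO_{\hX}(1)_\eta$ is the maximal ideal of the DVR $\cO_{\hX,\eta}$, which forces the proportionality constant to be $1$. Your proposal skips this entirely. Relatedly, your two normalizations are in tension: after replacing $R$ by the Veronese $R^{(k)}$ generated in degree one, the twist $\cO(1)$ in the new grading is $\cO(k)$ in the old one and cuts out $lF$ rather than $F$, while the ideal sheaf of $F$ itself is the old $\cO(1)$, which need not be invertible; so the identification ``$\cO_{\hX}(-m)$ cuts out $mF$'' cannot be arranged by the Veronese trick and must be proved.
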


\begin{proof}
 By Lemma \ref{f.g.}, we know that $\oplus_{m\geq 0}\fa_m(v_*)$
 is a finitely generated $\cO_X$-algebra.
 Since the valuation ideals $\fa_m(v_*)$
 are always integrally closed, $\hX$ is normal.
 By definition of $F$ we know
 that the ideal sheaf $I_F$ is the same as the coherent sheaf
 $\cO_{\hX}(1)$. Let $k\in\bZ_{>0}$ be sufficiently divisible so that
 $\fa_{mk}(v_*)=\fa_k(v_*)^m$ for any positive integer $m$. Thus
 $\hX$ is naturally isomorphic to the blow up of $X$ along the ideal sheaf
 $\fa_{k}(v_*)$. In particular, $\cO_{\hX}(k)=\sigma^{-1}\fa_k(v_*)
 \cdot\cO_{\hX}$ is an invertible ideal sheaf. Next, $\cO_{\hX}
 /\cO_{\hX}(k)$ is supported at the exceptional locus of $\sigma$.
 As ideal sheaves on $\hX$,
 we know that $\cO_{\hX}(1)^k\subset\cO_{\hX}(k)\subset
 \cO_{\hX}(1)$. Hence $\cO_{\hX}(1)$ and $\cO_{\hX}(k)$ have the same
 nilradical as ideal sheaves,
 and 
 \[
  F_{\mathrm{red}}=\Supp(\cO_{\hX}/\cO_{\hX}(1))=\Supp(\cO_{\hX}/\cO_{\hX}(k))
 \]
 which is the reduced exceptional locus of $\sigma$.
 On the other hand, it is easy to see that $\mathrm{gr}_{v_*}\cO_{X,p}$
 is an integral domain, so $F$ is an integral scheme.
 
 For $k$ sufficiently divisible, we have that
 $\cO_{\hX}(k)$ is invertible and $\sigma_*\cO_{\hX}(mk)=\fa_{mk}(v_*)$
 for any $m\in\bZ_{\geq 0}$. Thus $\cO_{\hX}(k)=\cO_{\hX}(-lF)$ for some positive
 integer $l$ (so $F$ is $\bQ$-Cartier). Then for any $m\in\bZ_{\geq 0}$ we have the following
 equivalences:
 \begin{align*}
  v_*(f)\geq m & \Leftrightarrow v_*(f^k)\geq mk  \Leftrightarrow
  f^k \in \fa_{mk}(v_*) \Leftrightarrow \sigma^* f^k\in \cO_{\hX}(mk)\\
  & \Leftrightarrow \ord_F(\sigma^* f^k)\geq ml \Leftrightarrow
  \ord_F(f)\geq \frac{ml}{k}.
 \end{align*}
 Assume $v_*(f)=m$. If $\ord_F(f)>ml/k$, then we have $\ord_F(f)\geq
 (ml+1)/k$. Hence $\ord_F(f^l)\geq (ml+1)l/k$ which implies that
 $v_*(f^l)\geq ml+1$, a contradiction! Hence we have $\ord_F=(l/k)v_*$.
 Therefore, $v_*=\ord_F$ since both $v_*$ and $\ord_F$ are divisorial.
\end{proof}

\begin{lem}\label{principal}
  Let $(R,\fm)$ be a Noetherian local ring.
  Let $\fp,\fq$ be two prime ideals in $R$ satisfying
  that $\fp+\fq=\fm$, $\fp\cap\fq=\fp\fq$ and $R/\fq$ is a DVR.
  Then $\fp$ is principal.
\end{lem}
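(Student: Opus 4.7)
The plan is to produce a single generator of $\fp$ by exploiting the DVR structure on $R/\fq$, then use Nakayama's lemma to promote it to a generator of all of $\fp$.

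First I would use the hypothesis that $R/\fq$ is a DVR: its maximal ideal $\fm/\fq$ is principal, say generated by the image $\bar{x}$ of some $x\in R$. Because $\fp+\fq=\fm$, the natural map $\fp\to \fm/\fq$ is surjective, so I may in fact choose $x$ to lie in $\fp$ (with $\bar{x}$ still generating $\fm/\fq$).

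Next I would prove the key identity $\fp=(x)+\fp\fq$. Given $y\in\fp$, its image $\bar{y}\in\fm/\fq$ is a multiple of $\bar{x}$, so $y=ax+q$ for some $a\in R$ and $q\in\fq$. Since $y\in\fp$ and $ax\in\fp$, we get $q\in\fp\cap\fq$, which equals $\fp\fq$ by hypothesis. This gives $\fp\subset (x)+\fp\fq$, and the reverse inclusion is obvious. In particular $\fp\subset (x)+\fm\fp$, since $\fq\subset\fm$.

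Finally, because $R$ is Noetherian, $\fp$ is finitely generated, hence so is the $R$-module $\fp/(x)$. The containment $\fp\subset (x)+\fm\fp$ says $\fp/(x)=\fm\cdot\big(\fp/(x)\big)$, so Nakayama's lemma forces $\fp/(x)=0$, i.e.\ $\fp=(x)$. I do not expect any real obstacle: the only subtlety is the initial choice of $x$ inside $\fp$ (not merely in $R$), which is exactly where the hypothesis $\fp+\fq=\fm$ is used, and the translation of $\fp\cap\fq=\fp\fq$ into the Nakayama setup.
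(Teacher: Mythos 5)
Your proof is correct and follows essentially the same route as the paper: choose a uniformizer $x$ of $R/\fq$ lying in $\fp$ (possible because $\fp+\fq=\fm$), deduce $\fp=(x)+\fp\cap\fq=(x)+\fp\fq$, and conclude by Nakayama. The only difference is that you spell out the intermediate verification of $\fp\subseteq(x)+\fp\fq$, which the paper leaves implicit.
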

 
\begin{proof}
  Let $x+\fq$ be a uniformizer of $R/\fq$. Since $\fm=\fp+\fq$,
  we may choose $x$ so that $x\in \fp$.
  Hence we have $(x)+\fq=\fm$. As a result, 
  \[
   \fp=(x)+\fp\cap\fq=(x)+\fp\fq.
  \]
  So $\fp=(x)$ by Nakayama lemma.
\end{proof}

\subsection{Applications}
The following theorem improves Fujita's result on the equality case
in \cite[Theorem 5.1]{fuj15}.

\begin{thm}\label{Pn}
 Let $X$ be a Ding-semistable $\bQ$-Fano variety of dimension $n$.
 If $((-K_X)^n)\geq (n+1)^n$, then $X\cong\bP^n$.
\end{thm}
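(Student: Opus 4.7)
The plan is to combine Fujita's volume upper bound with the equality case of Theorem~\ref{main1} at a smooth point, and then build the test configuration from the proof of Lemma~\ref{kpoly} to degenerate $X$ to $\bP^n$. First, by \cite[Theorem 1.1]{fuj15} applied to our Ding-semistable $X$, we have $((-K_X)^n)\leq (n+1)^n$, so the hypothesis forces equality: $((-K_X)^n)=(n+1)^n$. Since $X$ is klt, its smooth locus is dense, so we may pick a smooth closed point $p\in X$. Then $\lct(X;\fm_p)=n$ and $\mult_p X=1$, and Theorem~\ref{main1} applied to $Z=\{p\}$ yields
\[
 (n+1)^n=((-K_X)^n)\leq \left(1+\tfrac{1}{n}\right)^n\cdot n^n\cdot 1=(n+1)^n,
\]
an equality. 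By Remark~\ref{volequality}, $\epsilon_p(-K_X)=n+1$; equivalently, the valuation $\ord_p$ (with $\hvol(\ord_p)=n^n$) minimizes $\hvol$ on $\Val_{X,p}$, by Theorem~\ref{main2} together with Theorem~\ref{compare2} and Proposition~\ref{minlctmult}.

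Next, I apply the construction in the proof of Lemma~\ref{kpoly} to $v_*=\ord_p$. Since $p$ is a smooth point, $\mathrm{gr}_{\ord_p}\cO_{X,p}\cong\bC[x_1,\ldots,x_n]$ is the polynomial ring, so $\cX:=\Proj_{X\times\bA^1}\bigoplus_m\fa_m(\bar v_*)$ is simply the blow-up of $X\times\bA^1$ at $(p,0)$, and its exceptional divisor $E$ is isomorphic to $\bP^n$. Taking $\cL$ as in Lemma~\ref{kpoly}, the assertions (a)--(c) in that proof apply verbatim---only the very last step invoked K-polystability---so we obtain a normal test configuration $(\cY,\cM)/\bA^1$ of $(X,-K_X)$, namely the ample model of $(\cX,\cL)$, with central fiber $\cY_0\cong \Proj \bC[x_0,x_1,\ldots,x_n]\cong\bP^n$ and with $\CM(\cY,\cM)=0$.

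It remains to conclude $X\cong\bP^n$ from this degeneration, and this is where the \emph{main obstacle} lies. My plan is as follows. Since $\cY$ is klt (inherited from $\cX$ through the $\cL$-trivial contraction $h:\cX\to\cY$) and $\cY_0\cong\bP^n$ is a Cartier divisor on $\cY$ (being a fiber of the flat morphism $\pi:\cY\to\bA^1$), inversion of adjunction applied to the smooth divisor $\cY_0$ forces $(\cY,\cY_0)$ to be plt. The Jacobian criterion then yields that $\cY$ is smooth along $\cY_0$; by openness of the smooth locus and the $\bG_m$-equivariance of $\pi$, every fiber of $\pi$ is smooth, so in particular $X\cong\cY_1$ is smooth. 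Once $X$ is a smooth Ding-semistable Fano manifold with $((-K_X)^n)=(n+1)^n$, \cite[Theorem 5.1]{fuj15} gives $X\cong\bP^n$. The hardest step will be this final paragraph: verifying that the ample model $\cY$ is klt and that $\cY_0$ is Cartier (which requires a careful discrepancy computation along the contraction $h$), and then using plt-ness to deduce smoothness of the total space $\cY$ along $\cY_0$ and propagate it to the general fiber.
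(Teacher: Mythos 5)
Your strategy coincides with the paper's second proof of this theorem (reduce to equality, run the construction of Lemma \ref{kpoly} with $v_*=\ord_p$ at a smooth point, pass to the ample model $(\cY,\cM)$), but there is a genuine gap at the decisive step: you assert that the central fiber satisfies $\cY_0\cong\bP^n$ as a consequence of assertions (a)--(c) alone. Those assertions only give that $h$ contracts exactly $\hX$ and that $h|_E:E\to\cY_0$ is a \emph{finite birational} morphism; since $E\cong\bP^n$ is normal, this says $\bP^n$ is the \emph{normalization} of $\cY_0$, not that $\cY_0$ itself is $\bP^n$. In Lemma \ref{kpoly} the identification $E\cong\cY_0$ was precisely the step that consumed K-polystability (there, $\CM(\cY,\cM)=0$ forces $\cY_0\cong X$, which is normal). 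In the Ding-semistable setting one must prove normality of $\cY_0$ by hand; the paper does this by observing $h_*\cO_{\cX}(-E)\cong\cO_{\cY}(-\cY_0)$, that $-E\sim_{\bQ,h}K_{\cX}$, and invoking relative Kodaira-type vanishing for the klt space $\cX$ to get $R^1h_*\cO_{\cX}(-E)=0$, whence $h_*\cO_E=\cO_{\cY_0}$. Your proposal contains no substitute for this, and your later plt/inversion-of-adjunction argument cannot fill the hole because it already presupposes that the fiber $\cY_0$ is the smooth variety $\bP^n$.

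Once that gap is filled, your endgame is a legitimate variant of the paper's. You check that the reduced Cartier fiber $\cY_0\cong\bP^n$ forces $\cY$ to be regular along $\cY_0$ (for this the plt detour is unnecessary: a normal variety with a smooth Cartier divisor through a point is regular at that point), hence $\pi$ is smooth near $0$ and $X\cong\cY_t$ is smooth, and you then quote Fujita's equality statement for Ding-semistable Fano \emph{manifolds}. The paper instead concludes directly from the rigidity of $\bP^n$ under smooth deformations that $\cY_t\cong\bP^n$. Both endings work; yours trades the rigidity input for Fujita's smooth equality case. But as written the proof is incomplete at the normality of $\cY_0$.
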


\begin{proof}[Proof 1]
 Notice that $((-K_X)^n)\leq (n+1)^n$
 by \cite[Corollary 1.3]{fuj15}. Thus we have $((-K_X)^n)=(n+1)^n$.
 Let $p\in X$ be a smooth point. From \cite[Proof of Theorem 5.1]{fuj15} or Remark
 \ref{volequality}, we see that $\epsilon_p(-K_X)=n+1$. Then
 the theorem is a consequence of the forthcoming paper
 \cite{lz16} (joint with Ziquan Zhuang), where we show that
 if an $n$-dimensional $\bQ$-Fano variety $X$ satisfies
 $\epsilon_p(-K_X)>n$ for some smooth point $p\in X$,
 then $X\cong\bP^n$.
\end{proof}

\begin{proof}[Proof 2]
 We follow the strategy and notation of the proof of Lemma \ref{kpoly}. Let $v_*
 :=\ord_p$ for a smooth point $p\in X$. As argued in the first proof, 
 the assumptions of Lemma \ref{kpoly} are fulfilled except that
 $X$ is only Ding-semistable rather than K-polystable. Nevertheless,
 we still have a semi $\bQ$-test configuration $(\cX,\cL)$ 
 of $(X,-K_X)$ such that $\cL^\perp=\hX$ and $\CM(\cX,\cL)=0$.
 Let $(\cY,\cM)$ be the ample model of $(\cX,\cL)/\bA^1$, with
 $h:\cX\to\cY$ and $\cL=h^*\cM$.
 We will show that $\cY_0\cong \bP^n$.
 
 Following the proof of Lemma \ref{kpoly}, 
 we know that $h|_E: E\to \cY_0$ is a finite birational morphism. Since 
 $v_*=\ord_p$ and $p\in X$ is a smooth point, we have that
 $E\cong \bP^n$.
 Therefore, $E$ is the normalization of $\cY_0$.
 Consider the short exact sequence 
 \[
  0\to\cO_{\cX}(-E)\to\cO_{\cX}\to\cO_E\to 0.
 \]
 By taking $h_*$, we get a long exact sequence
 \begin{equation}\label{exactpn}
  0\to h_*\cO_{\cX}(-E)\to \cO_{\cY} \to h_*\cO_E \to R^1 h_*
  \cO_{\cX}(-E)\to\cdots
 \end{equation}
 Since $\cX_0=\hX+E$, we have that $h_*\cO_{\cX}(-E)
 =\cO_{\cY}(-\cY_0)\otimes h_*\cO_{\cX}(\hX)$. Notice that
 $\hX$ is $h$-exceptional, hence $h_*\cO_{\cX}(\hX)=\cO_{\cY}$. As a result, $h_*\cO_{\cX}
 (-E)\cong \cO_{\cY}(-\cY_0)$. On the other hand, we have
 \[
 h^*\cM=\cL=g^*(-K_{X\times\bA^1/\bA^1})-(n+1)E=-K_{\cX/\bA^1}-E.
 \]
 So $-E=K_{\cX/\bA^1}+h^*\cM\sim_{\bQ, h} K_{\cX}$. 
 Since $\cX$ has klt singularities,  we have that 
 $R^1 h_*\cO_{\cX}(-E)=0$ by the generalized Kodaira
 vanishing theorem \cite[Theorem 10.19.4]{kol95}. Thus 
 the exact sequence \eqref{exactpn} yields that $h_*\cO_E=\cO_{\cY_0}$,
 which implies $\cY_0\cong E\cong \bP^n$.
  
Since $\bP^n$ is rigid under smooth deformation (cf. \cite[Exercise V.1.11.12.2]{kol96}),
we conclude that $\cY_t\cong\bP^n$ for general 
$t\in\bA^1\setminus\{0\}$, hence $X\cong\cX_t\cong\cY_t\cong\bP^n$.
\end{proof}

In the following theorem, we show the equality case of Theorem \ref{quotsing}.

\begin{thm}\label{quot2}
 Let $X$ be a K\"ahler-Einstein $\bQ$-Fano variety. Let $p\in X$
 be a closed point. Suppose $(X,p)$ is a quotient singularity
 with local analytic model $\bC^n/G$, where $G\subset GL(n,\bC)$ acts
 freely in codimension $1$. Then $((-K_X)^n)=(n+1)^n/|G|$ if and only if
 $|G\cap \mathbb{G}_m|=1$ and $X\cong\bP^n/G$.
\end{thm}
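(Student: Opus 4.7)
The strategy is to apply Lemma~\ref{kpoly} to the divisorial valuation $v_0\in\Val_{X,p}$ used in the proof of Theorem~\ref{quot1}, which satisfies $\hvol(v_0)=n^n/|G|$.

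For the \emph{if} direction, extend $G\subset GL(n,\bC)$ to $GL(n+1,\bC)$ by the trivial action on a new coordinate $x$; this induces a $G$-action on $\bP^n=\Proj\bC[x_1,\dots,x_n,x]$. The hypothesis $|G\cap\mathbb{G}_m|=1$ guarantees that this action is free in codimension one (otherwise a nontrivial scalar in $G$ would fix the hyperplane $H=\{x=0\}$ pointwise). Hence the quotient map $\pi:\bP^n\to X=\bP^n/G$ is étale in codimension one, so $\pi^*(-K_X)=-K_{\bP^n}$ and $((-K_X)^n)=((-K_{\bP^n})^n)/|G|=(n+1)^n/|G|$.

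For the \emph{only if} direction, the hypothesis $((-K_X)^n)=(n+1)^n/|G|=(1+\tfrac{1}{n})^n\hvol(v_0)$ is exactly the equality case of Theorem~\ref{mainthm2} applied at $v=v_0$. Since a K\"ahler-Einstein $\bQ$-Fano variety is K-polystable by Berman~\cite{ber12}, Lemma~\ref{kpoly} applies and yields
\[
X\;\cong\;\Proj\bigl((\mathrm{gr}_{v_0}\cO_{X,p})[x]\bigr),
\]
with $x$ a degree-one indeterminate. The key computation is identifying $\mathrm{gr}_{v_0}\cO_{X,p}\cong\bC[x_1,\dots,x_n]^G$ as graded $\bC$-algebras. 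Under the analytic isomorphism $\widehat{\cO_{X,p}}\cong\widehat{\cO_{Y,o}}=\bC[[x_1,\dots,x_n]]^G$ used to define $v_0$ in the proof of Theorem~\ref{quot1}, the extension $\hat v_0$ is simply the restriction of $\ord_0$ on $\bC[[x_1,\dots,x_n]]$; since $G$ acts linearly and preserves the natural grading, the valuation ideals are
\[
\fa_m(\hat v_0)=\prod_{k\geq m}\bC[x_1,\dots,x_n]^G_k,
\]
and taking associated graded (unchanged under $\fm_p$-primary completion) gives the desired identification. Consequently
\[
X\;\cong\;\Proj\bC[x_1,\dots,x_n,x]^G\;=\;\bP^n/G,
\]
with $G$ acting on $\bP^n$ via the extension with trivial action on $x$.

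It remains to show $d:=|G\cap\mathbb{G}_m|=1$. If $d>1$, any nontrivial $\zeta I\in G\cap\mathbb{G}_m$ fixes $H=\{x=0\}$ pointwise, so $\pi:\bP^n\to X$ is ramified along $H$ with ramification index exactly $d$, and unramified elsewhere in codimension one (since $G$ acts freely in codimension one on $\bC^n=\bP^n\setminus H$). Riemann-Hurwitz yields $K_{\bP^n}=\pi^*K_X+(d-1)H$, hence $\pi^*(-K_X)=(n+d)H$ and $((-K_X)^n)=(n+d)^n/|G|$. Comparing with the hypothesis forces $d=1$. The main obstacle is the clean identification of $\mathrm{gr}_{v_0}\cO_{X,p}$ with the invariant polynomial ring, which requires carefully transferring the valuation through the analytic equivalence before taking the associated graded; once that is in hand, both the isomorphism $X\cong\bP^n/G$ and the vanishing of $d-1$ are immediate.
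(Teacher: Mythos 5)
Your overall strategy is the paper's: transfer the valuation constructed in the proof of Theorem \ref{quot1} to $X$, invoke K-polystability (via Berman) to apply Lemma \ref{kpoly}, identify the associated graded ring with $\bC[x_1,\dots,x_n]^G$, conclude $X\cong\bP^n/G$, and finally force $d:=|G\cap\mathbb{G}_m|=1$ by computing $\pi^*(-K_X)=(n+d)H$ and $((-K_X)^n)=(n+d)^n/|G|$. The ``if'' direction and the concluding Riemann--Hurwitz step are essentially verbatim the paper's.

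There is, however, one genuine gap: the valuation you feed into Lemma \ref{kpoly}. The valuation $v_0$ from the proof of Theorem \ref{quot1} is the transfer of $u_0=\pi_*(\ord_0)=d\,\ord_F$, where $F\cong\bP^{n-1}/G$ is the exceptional divisor of $\hat{Y}\to Y$. When $d>1$ this is \emph{not} a divisorial valuation in the sense required by Lemma \ref{kpoly}: the proof of that lemma (through Lemma \ref{primeF}) really uses that $v_*$ is the order of vanishing along a prime divisor, e.g.\ to conclude $\bar{v}_*=\ord_E$, to write $\cX_0=\hX+E$, and to normalize $\cO_{\cX}(1)$ and hence the degree of the adjoined variable $x$ in $\Proj\,(\mathrm{gr}_{v_*}\cO_{X,p})[x]$. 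Since the whole point of the final step is to \emph{exclude} $d>1$, you cannot quietly assume $v_0$ is divisorial. Concretely, your identification $\mathrm{gr}_{v_0}\cO_{X,p}\cong\bC[x_1,\dots,x_n]^G$ with $\deg x_i=1$, combined with ``$x$ of degree $1$,'' is not what the lemma delivers; applied to the honest divisorial valuation $\ord_F=v_0/d$, the lemma gives $\bC[x_1,\dots,x_n]^G$ graded with $\deg x_i=1/d$ and $\deg x=1$, and one then needs the Veronese substitution $y=x^{1/d}$ to see that this $\Proj$ is still $\bP^n/G$ --- which is exactly how the paper proceeds. Your argument lands on the correct variety because the two $\Proj$'s happen to agree abstractly, and your subsequent computation of $-K_X$ is done independently by Riemann--Hurwitz, so the damage is contained; but as written the application of Lemma \ref{kpoly} is outside its hypotheses precisely in the case you need to rule out. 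The fix is simply to replace $v_0$ by $\ord_F$ and carry the $\tfrac{1}{d}$-grading through, as in the paper.
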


\begin{proof}
 For the ``if'' part, we may assume that $G\subset U(n)$. Hence
 $G$ preserves the Fubini-Study metric $\omega_{FS}$ on $\bP^n$.
 Since $|G\cap\mathbb{G}_m|=1$, the $G$-action on $\bP^n$ is free
 in codimension $1$. This implies that the quotient metric of 
 $\omega_{FS}$ on $\bP^n/G$ is K\"ahler-Einstein and $((-K_{\bP^n/G})^n)
 =((-K_{\bP^n})^n)/|G|=(n+1)^n/|G|$.
 \medskip
 
 For the ``only if'' part, let $(Y,o):=(\bC^n/G,0)$ be the local
 analytic model of $(X,p)$. Let $d:=|G\cap\mathbb{G}_m|$. Following the proof of Theorem \ref{quot1},
 we have a partial resolution $h:\hY\to Y$ of $Y$ that is the 
 quotient of the blow up $g:\widehat{\bC^n}\to\bC^n$. Since 
 $\hY$ is the quotient of $\widehat{\bC^n}$, it has klt singularities. Denote by $F$ the exceptional divisor of $h$, then we have $F\cong\bP^{n-1}/G$. Let $u_*:=\ord_F$ be the divisorial on $Y$ centered at $o$, then we have
 $\mathrm{gr}_{u_*}\cO_{Y,o}\cong\bC[x_1,\cdots,x_n]^G$ is a finitely
 generated $\bC$-algebra. (Here we assume $\deg x_i=1/d$ so that
 the grading is preserved under the isomorphism.) Theorem \ref{quot1} also implies
 $\hvol(u_*)=n^n/|G|$.
  
 Since $\widehat{\cO_{X,p}}\cong\widehat{\cO_{Y,o}}$, the divisorial valuation $u_*$
 induces a divisorial valuation $v_*$ on $X$ centered at $p$
 as in the proof  of Theorem \ref{quot1}. Thus $\hvol(v_*)=\hvol(u_*)$,
 $\mathrm{gr}_{v_*}(\cO_{X,p})
 \cong\mathrm{gr}_{u_*}(\cO_{Y,o})$ is a finitely generated $\bC$-algebra,
 $\hX:=\Proj_X\oplus_{m\geq 0}\fa_{m}(v_*)$ has klt singularities, and
 \[
  ((-K_X)^n)=\frac{(n+1)^n}{|G|}=\left(1+\frac{1}{n}\right)^n\hvol(v_*).
 \]
 By \cite{ber12} we have that $X$ is K-polystable. Hence applying Lemma \ref{kpoly}
 yields
 \[
     X\cong \Proj ~ \mathrm{gr}_{v_*}\cO_{X,p} [x] \cong \Proj 
     ~\bC[x_1,\cdots,x_n]^G[x],
 \]
 where $\deg x_i=1/d$ and $\deg x=1$. Denote $y:=x^{1/d}$, then 
 $\bC[x_1,\cdots,x_n]^G[x]$ is the Veronese subalgebra of 
 $\bC[x_1,\cdots,x_n]^G[y]$ 
 where $\deg x_i=\deg y=1/d$. As a result,
 \[
  X\cong \Proj~\bC[x_1,\cdots,x_n]^G[y] = \bP^n/G.
 \]

 Denote by $H_\infty$ the hyperplane at infinity in $\bP^n$. Let 
 $D$ be the prime divisor in $X$ corresponding to $H_\infty/G$ in $\bP^n/G$. It is clear
 that $(X, (1-\frac{1}{d})D)$ is an orbifold as a global quotient
 of $\bP^n$. Denote by $\pi:\bP^n\to X$ the quotient map, then we have
 \[
   \pi^*\left(K_X+\left(1-\frac{1}{d}\right)D\right)=K_{\bP^n},\quad \pi^*D=dH_\infty.
 \]
 Hence $\pi^* (-K_X)=-K_{\bP^n}+(d-1)H_\infty=\cO_{\bP^n}(n+d)$.
 This implies $((-K_X)^n)=(n+d)^n/|G|$, so $d=1$.
\end{proof}

\begin{rem}
\begin{enumerate}
 \item The restriction on $G$ in Theorem \ref{quot2} can be dropped in its
 logarithmic version as follows: suppose $(X,D)$ is a conical
 K\"ahler-Einstein log Fano pair, $p\not\in\Supp(D)$,
 $(X,p)$ is analytically isomorphic to $(\bC^n/G,0)$ and 
 $((-K_X-D)^n)=(n+1)^n/|G|$, then
 $(X,D)\cong(\bP^n/G,(1-\frac{1}{d})H_\infty/G)$
 where $d=|G\cap \mathbb{G}_m|$.
 \item The K-polystable condtion in Lemma \ref{kpoly} and K\"ahler-Einstein condition in
Theorem \ref{quotsing} cannot be dropped since any cubic surface in $\bP^3$ with only one or two $\bA_2$
singularities is K-semistable but not K-polystable (hence not K\"ahler-Einstein) according to \cite[Section 4.2]{oss16}, but all global quotients of $\bP^2$ are K\"ahler-Einstein (hence K-polystable).
\end{enumerate}
\end{rem}

\begin{cor}[=Corollary \ref{ck}]
Let $X$ be a K\"ahler-Einstein log Del Pezzo surface with at most Du Val
singularities. 
\begin{enumerate}
 \item If $((-K_X)^2)=1$, then $X$ has at most singularities of
 type $\bA_1$, $\bA_2$, $\bA_3$, $\bA_4$, $\bA_5$, $\bA_6$,
 $\bA_7$ or $\bD_4$.
 \item If $((-K_X)^2)=2$, then $X$ has at most singularities
of type $\bA_1$, $\bA_2$ or $\bA_3$.
 \item If $((-K_X)^2)=3$, then $X$ has at most singularities
of type $\bA_1$ or $\bA_2$.
 \item If $((-K_X)^2)=4$, then $X$ has at most singularities
of type $\bA_1$.
 \item If $((-K_X)^2)\geq 5$, then $X$ is smooth.
\end{enumerate}

\end{cor}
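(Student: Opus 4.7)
The plan is to apply Corollary \ref{logdp} at each singular point and perform an elementary arithmetic enumeration. Every Du Val singularity on a surface is analytically isomorphic to a quotient $\bC^2/G$ with $G \subset SL(2,\bC)$ a finite binary polyhedral group acting freely away from the origin (in particular freely in codimension $1$). The orders of these groups are well known: $|G| = n+1$ for type $\bA_n$, $|G| = 4(n-2)$ for type $\bD_n$, and $|G| = 24, 48, 120$ for $\bE_6, \bE_7, \bE_8$ respectively. Thus Corollary \ref{logdp} applies pointwise and yields, at each singular point $p$,
\[
V := ((-K_X)^2) \leq \frac{9}{|G_p|}.
\]

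The casework is then immediate. For $V \geq 5$ one gets $|G_p| < 2$, forcing smoothness (case 5); for $V = 4$ one gets $|G_p| \leq 2$, giving only $\bA_1$ (case 4); for $V = 3$ one gets $|G_p| \leq 3$, giving only $\bA_1$ or $\bA_2$ (case 3); for $V = 2$ one gets $|G_p| \leq 4$, giving $\bA_1, \bA_2$ or $\bA_3$ (case 2); and for $V = 1$ one gets $|G_p| \leq 9$, which a priori admits $\bA_1, \ldots, \bA_8$ (orders $2, \ldots, 9$) and $\bD_4$ (order $8$). In cases (2)--(5) the inequality is strict for every admissible order, so no further work is needed.

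The main obstacle, and the only genuine content beyond enumeration, is excluding $\bA_8$ from case (1). Here $|G_p| = 9$ saturates the bound, so the equality clause of Corollary \ref{logdp} would force $X \cong \bP^2/G_p$ with $|G_p \cap \mathbb{G}_m| = 1$; the latter condition is automatic for $G_p = \bZ_9 \subset SL(2,\bC)$ acting as $\tfrac{1}{9}(1,-1)$, since a generator acts by $\mathrm{diag}(\zeta, \zeta^{-1})$ with $\zeta$ a primitive $9$-th root of unity, and this is scalar only when $\zeta^{2k} = 1$, which on $\bZ_9$ forces $k = 0$. The induced $\bZ_9$-action on $\bP^2 = \Proj\bC[y, z_1, z_2]$ (with $y$ the $G$-invariant generator produced by Theorem \ref{quot2}) is $\zeta \cdot [y : z_1 : z_2] = [y : \zeta z_1 : \zeta^{-1} z_2]$, with isolated fixed points at $[1:0:0], [0:1:0], [0:0:1]$. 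The first fixed point gives back the expected $\bA_8$ singularity; at the other two, a direct linearization in the standard affine charts shows that the local action is equivalent to $\tfrac{1}{9}(1, 2)$. Since $1 + 2 \not\equiv 0 \pmod{9}$, this cyclic quotient singularity is not Gorenstein, hence not Du Val, contradicting the hypothesis on $X$. This rules out $\bA_8$ and completes case (1).
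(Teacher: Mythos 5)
Your proof is correct and follows essentially the same route as the paper: apply Corollary \ref{logdp} pointwise, enumerate the orders of the Du Val orbifold groups, and exclude $\bA_8$ in the degree-one case via the equality clause forcing $X\cong\bP^2/(\bZ/9\bZ)$, whose other two fixed points give non-Du Val singularities of type $\tfrac{1}{9}(1,2)$. This matches the paper's argument step for step (up to a relabeling of the coordinates on $\bP^2$).
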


\begin{proof}
  Let $\bC^2/G$ be the local analytic model of $(X,p)$ for any 
 closed point $p\in X$. 
  By Corollary \ref{logdp}, we know that
 $|G|\leq 9/((-K_X)^2)$.  
  
  Recall that the order of orbifold group 
 at an $\bA_k$ singularity is $k+1$, a $\bD_k$ singularity
 is $4(k-2)$, an $\bE_6$ singularity is $24$, an $\bE_7$ 
 singularity is $48$, and an $\bE_8$ singularity is $120$.
 Hence the inequality $|G|\leq 9/((-K_X)^2)$ implies (2)(3)(4)(5).
 For part (1), the same argument yields that $X$ has at most
 singularities type of $\bA_k$ with $k\leq 8$, or of type $\bD_4$. Hence we only need to rule out
 $\bA_8$ cases. 
 
 Assume to the contrary that $(X,p)$ is of type
 $\bA_8$. Since $((-K_X)^2)=1=9/|G|$, Theorem \ref{quot2} 
 implies that $X\cong\bP^2/G$, where $G=\bZ/9\bZ$ acts on $\bP^2$
 as follows:
 \[
  [x,y,z]\mapsto [\zeta^i x,\zeta^{-i}y, z]\quad\textrm{for }
  i\in \bZ/9\bZ,
 \]
 where $\zeta:=e^{\frac{2\pi i}{9}}$ is the ninth root of unity.
 The point $p$ is the quotient of $[0,0,1]$ under this action,
 which is of type $\frac{1}{9}(1,-1)$ as a cyclic quotient singularity.
 However, the quotient singularities at $[1,0,0]$ and $[0,1,0]$ are both
 of type $\frac{1}{9}(1,2)$ which are not Du Val, contradiction!
\end{proof}

\end{document}